\theoremstyle{plain}
\newtheorem{proposition}{Proposition}[section]
\newtheorem{theorem}[proposition]{Theorem}
\newtheorem{lemma}[proposition]{Lemma}
\newtheorem{corollary}[proposition]{Corollary}
\theoremstyle{definition}
\newtheorem{definition}[proposition]{Definition}
\theoremstyle{remark}
\newtheorem{remark}[proposition]{Remark}
\DeclareMathOperator{\tr}{tr}
\DeclareMathOperator{\supp}{supp}
\DeclareMathOperator{\OO}{\mathcal{O}}
\DeclareMathOperator{\RR}{\mathbb{R}}
\DeclareMathOperator{\JJ}{\mathbb{J}}
\DeclareMathOperator{\NN}{\mathbb{N}}
\newcommand{\abs}[1]{\left|#1\right|}
\newcommand{\norm}[1]{\left\|#1\right\|}
\newcommand{\wt}[1]{\widetilde{#1}}
\begin{document}

\title{Boundaries of non-compact harmonic manifolds}
\author{Andrew M. Zimmer}\address{Department of Mathematics, University of Michigan, Ann Arbor, MI 48109.}
\email{aazimmer@umich.edu}
\date{\today}
\keywords{harmonic manifolds, geodesic flow, topologically transitive, boundaries, compactifications}

\begin{abstract}
In this paper we consider non-compact non-flat simply connected harmonic manifolds. In particular, we show that the Martin boundary and Busemann boundary coincide for such manifolds. For any finite volume quotient we show that (up to scaling) there is a unique Patterson-Sullivan measure and this measure coincides with the harmonic measure. As an application of these results we prove that the geodesic flow on a non-flat finite volume harmonic manifold without conjugate points is topologically transitive. 
\end{abstract}

\maketitle

\section{Introduction}

A complete Riemannian manifold $M$ is called \emph{harmonic} if about any point the geodesic spheres of sufficiently small radii are of constant mean curvature. Examples of harmonic manifolds include rank one locally symmetric spaces and flat spaces. In this paper we study non-compact simply connected harmonic manifolds. It is well known that these manifolds have no conjugate points. 

Suppose $X$ is a simply connected non-compact harmonic manifold. Delaying definitions until Section 2, let $\partial \hat{X}$ denote the Busemann boundary of $X$ and $\partial_{\Delta} X$ denote the Martin boundary of $X$. For a unit tangent vector $v \in SX$ there is a natural associated function
\begin{align*}
b_v(x) = \lim_{t \rightarrow \infty} d(x,\gamma_v(t))-t
\end{align*}
where $\gamma_v$ is the geodesic in $X$ with $\gamma_v^\prime(0)=v$. As in~\cite{Led10, LW10} we normalize our Busemann functions such that $\xi(o)=0$ for a fixed point $o \in X$. Recent results of Ranjan and Shah~\cite{rs2003} imply that for any $p \in X$ the map $\Phi_p : S_p X \rightarrow \partial \hat{X}$ given by
\begin{align*}
\Phi_p(v)(x) = b_v(x)-b_v(o)
\end{align*}
is a homeomorphsim when $X$ is simply connected, non-compact, and harmonic. Our first main result is the following.

\begin{theorem}
\label{thm:main}
Suppose $X$ is a non-flat, non-compact, simply connected harmonic manifold. Then the map $id:X \rightarrow X$ extends to a homeomorphism of the Busemann and Martin compactifications. On the boundaries this map is given by
\begin{align*}
\xi \in \partial \hat{X} \rightarrow e^{-h_{vol} \xi} \in \partial_{\Delta} X.
\end{align*}
Under this identification of $\partial \hat{X}$ and $\partial_{\Delta}X$, the harmonic measures are given by $\mu_p = (\Phi_p)_* \lambda_p$ where $\lambda_p$ is the Lebesque measure on $S_p X$. 
\end{theorem}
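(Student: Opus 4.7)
The plan is to produce a candidate Martin kernel from each Busemann point, check via the radial Green's function that these are exactly the limits of $G(\cdot,y)/G(o,y)$ as $y\to\infty$, and then use the mean value property of harmonic manifolds to identify the harmonic measure.

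\textbf{Step 1: Harmonic functions from Busemann functions.} The harmonicity of $X$, combined with Ranjan--Shah's analysis of horospheres, implies that each Busemann function $b_v$ is smooth with $|\nabla b_v|\equiv 1$, and that $\Delta b_v$ is a constant equal to the common mean curvature of horospheres. In the non-flat case this constant is $h_{vol}$. A direct computation then gives
\[
\Delta e^{-h_{vol}b_v}=e^{-h_{vol}b_v}\bigl(h_{vol}^2|\nabla b_v|^2-h_{vol}\Delta b_v\bigr)=0,
\]
so $u_v:=e^{-h_{vol}b_v}$ is a positive harmonic function with $u_v(o)=1$. This is my candidate for the Martin kernel at $\xi_v:=\Phi_o(v)$.

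\textbf{Step 2: Radial Green's function and boundary identification.} Because $X$ is harmonic, the Green's function is radial, $G(x,y)=g(d(x,y))$, with $g$ independent of the base point. The equation $\Delta G=-\delta$ collapses to $g''(r)+\sigma(r)g'(r)=0$ where $\sigma(r)=(\log A)'(r)$ and $A(r)$ is the area of the geodesic sphere of radius $r$. The constancy $\Delta b_v\equiv h_{vol}$ upgrades the volume-entropy statement $\log A(r)/r\to h_{vol}$ to $\sigma(r)\to h_{vol}$, yielding the precise asymptotic $g(r+a)/g(r)\to e^{-h_{vol}a}$ uniformly for $a$ in compact sets. Given $y_n\to\xi\in\partial\hat X$ in the Busemann compactification, Ranjan--Shah lets me write $y_n=\exp_o(r_nv_n)$ with $v_n\to v$ and $r_n\to\infty$, so
\[
\frac{G(x,y_n)}{G(o,y_n)}=\frac{g(d(x,y_n))}{g(r_n)}\longrightarrow e^{-h_{vol}(b_v(x)-b_v(o))}=u_v(x).
\]
Hence the Martin compactification is at most as fine as the Busemann compactification; since $h_{vol}>0$, the map $\xi\mapsto e^{-h_{vol}\xi}$ is injective and the two compactifications coincide under this identification.

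\textbf{Step 3: Identifying the harmonic measure.} For the last assertion I would invoke the defining mean value property of harmonic manifolds: every bounded harmonic function $u$ satisfies $u(p)=|S_pX|^{-1}\int_{S_pX}u(\exp_p(rw))\,dw$ for all $r>0$. Once Step 2 is in hand, bounded harmonic functions extend continuously to $\partial\hat X$, so letting $r\to\infty$ the right-hand side converges to $|S_pX|^{-1}\int_{S_pX}\widetilde u(\Phi_p(w))\,dw$. Comparing with the Martin--Poisson representation $u(p)=\int\widetilde u\,d\mu_p$ for all bounded harmonic $u$ forces $\mu_p=(\Phi_p)_*\lambda_p$ up to normalization.

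\textbf{Main obstacle.} The hard analytic point is the refined asymptotic $\sigma(r)\to h_{vol}$; the Cesaro-type volume-entropy identity alone is not enough to control $g(r+a)/g(r)$. Deriving this refinement from $\Delta b_v\equiv h_{vol}$ requires a Riccati-type comparison along radial Jacobi fields, linking the radial mean curvature $\sigma(r)$ to the horospherical mean curvature $h_{vol}$ in the large-radius limit. A secondary technical point is that Step 3 relies on the boundary continuity established in Step 2, so the logical order of the argument is essential; note that no separate minimality argument is needed because every subsequential Martin limit at infinity already has the form $u_v$ and each $u_v$ is realized.
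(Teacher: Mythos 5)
Your Steps 1--2 follow essentially the paper's own route: radiality of the Green's function, the ODE $\wt{G}''+(\Theta'/\Theta)\wt{G}'=0$, and the asymptotic $\Theta'(r)/\Theta(r)\to h_{vol}$ give $\wt{G}'(r)/\wt{G}(r)\to -h_{vol}$ and hence the identification $\xi\mapsto e^{-h_{vol}\xi}$ of the two boundaries. The point you flag as the ``main obstacle'' ($\sigma(r)\to h_{vol}$) is exactly the paper's Lemma~\ref{lem:volume_ratio}, quoted from Knieper, so it is a citable fact rather than a gap; note also that to pass from the asymptotics of $\wt{G}'$ to $g(r+a)/g(r)\to e^{-h_{vol}a}$ you additionally need $\wt{G}(r)\to 0$ (or argue directly with $\wt{G}'/\wt{G}$ as the paper does), a small but real step.

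The genuine gap is Step 3. First, ``bounded harmonic functions extend continuously to $\partial\hat X$'' is not a consequence of Step 2: identifying the Martin boundary topologically says nothing about boundary behavior of bounded harmonic functions. In general this statement is false (the Poisson integral of a discontinuous $L^\infty$ boundary function is bounded and harmonic but has no continuous boundary extension), and even for a nice subclass it would require solving the Dirichlet problem at infinity together with a Fatou-type theorem identifying the continuous extension $\widetilde u$ with the boundary data appearing in the Martin representation --- none of which you establish, and the representation $u(p)=\int\widetilde u\,d\mu_p$ you ``compare'' against is precisely what would have to be proved. Second, your claim that no minimality argument is needed is mistaken: the harmonic measure is by definition the unique representing measure \emph{supported on the minimal points}, and uniqueness of representing measures fails off the minimal set, so knowing that every Martin point has the form $e^{-h_{vol}b_v}$ does not by itself let you conclude that a measure representing the constant function $1$ equals $\mu_p$. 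This is exactly why the paper proves Proposition~\ref{prop:minimal} (via $b_v(\gamma_w(t))\ge -t$ with equality iff $v=w$), and then, instead of a Dirichlet-problem argument, constructs an explicit representing measure as a weak-$*$ limit of the normalized measures $e^{-sd(p,y)}\,dy$ as $s\searrow h_{vol}$, using Nikolayevsky's purely exponential volume growth to force the mass to the boundary and $O(g_p)$-invariance to identify the limit with $(\Phi_p)_*\lambda_p$. To repair your Step 3 you would either need to supply minimality plus such a construction, or genuinely prove boundary regularity (Dirichlet solvability and a Fatou theorem) for this class of manifolds.
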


Our second result concerns finite volume harmonic manifolds without conjugate points.

\begin{theorem}
\label{thm:compact}
Suppose $M$ is a non-flat finite volume harmonic manifold without conjugate points. Let $X$ be the universal Riemannian cover of $M$ with deck transformations $\Gamma=\pi_1(M) \subset \text{Isom}(X)$. Then
\begin{enumerate}
\item up to scaling, there is a unique $\Gamma$-Patterson-Sullivan measure $\{ \nu_x : x \in X\}$ on the Busemann boundary of $X$,
\item under the natural identification of the Busemann boundary and Martin boundary the Patterson-Sullivan measures coincides with the harmonic measures.
\end{enumerate}
\end{theorem}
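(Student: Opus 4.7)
The plan is to handle both parts in tandem by first showing that the harmonic measures themselves constitute a $\Gamma$-Patterson--Sullivan family---which simultaneously establishes part (2) and the existence half of part (1)---and then arguing that any other such family is a scalar multiple. For the existence step, Theorem \ref{thm:main} identifies the Martin kernel at $\xi$ with $e^{-h_{vol}\xi(x)}$ and gives $\mu_p = (\Phi_p)_*\lambda_p$. The standard identification of the Poisson kernel with the Martin kernel yields
\[ \frac{d\mu_y}{d\mu_x}(\xi) \;=\; \frac{k_\xi(y)}{k_\xi(x)} \;=\; e^{-h_{vol}(\xi(y) - \xi(x))}, \]
which is precisely the Patterson--Sullivan conformality condition at dimension $\delta = h_{vol}$; the required $\Gamma$-equivariance $\gamma_*\mu_x = \mu_{\gamma x}$ is immediate from isometry-invariance of the Laplacian.

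For uniqueness, let $\{\nu_x\}$ be any $\Gamma$-PS family, necessarily of exponent $h_{vol}$ (the critical exponent of $\Gamma$ in the finite-volume setting). A key structural fact for harmonic manifolds is that $\Delta\xi \equiv h_{vol}$, which combined with $|\nabla\xi|^2 = 1$ shows $e^{-h_{vol}\xi}$ is harmonic on $X$. Consequently the total mass
\[ u(x) \;:=\; \nu_x(\partial \hat{X}) \;=\; \int_{\partial \hat{X}} e^{-h_{vol}\xi(x)}\, d\nu_o(\xi) \]
is a positive harmonic function on $X$, and the PS equivariance $\gamma_*\nu_x = \nu_{\gamma x}$ forces $u(\gamma x) = u(x)$, so $u$ descends to a positive harmonic function on the finite-volume quotient $M$. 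Once one knows $u \equiv c$, the common conformality cocycle forces the Radon--Nikodym density $\phi = d\nu_o/d\mu_o$ to be $\Gamma$-invariant $\mu_o$-almost everywhere, and uniqueness of the Martin representation of the constant function $c$ (using minimality of the Martin boundary points afforded by Theorem \ref{thm:main}) then promotes this to $\nu_o = c\mu_o$, whence $\nu_x = c\mu_x$ for every $x$.

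The main obstacle is thus to establish that $u$ is constant, equivalently that the $\Gamma$-action on $(\partial \hat{X}, \mu_o)$ is ergodic. We cannot appeal to geodesic-flow ergodicity here, since topological transitivity of the geodesic flow is the intended application and is strictly weaker than ergodicity. The argument must instead proceed through a Liouville-type statement for positive harmonic functions on the finite-volume quotient $M$, exploiting the Einstein property of harmonic manifolds together with the explicit form of the Martin kernel; the analytical machinery from the cited works of Ledrappier and Ledrappier--Wang on harmonic measures for covers of finite-volume manifolds appears to supply exactly what is needed here.
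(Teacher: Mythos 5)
Your overall strategy coincides with the paper's: show the harmonic measures form a $\Gamma$-Patterson--Sullivan family (existence plus part (2)), then for an arbitrary PS family $\{\nu_x\}$ observe that $u(x)=\nu_x(\partial\hat{X})=\int e^{-h_{vol}\xi(x)}d\nu_o(\xi)$ is a positive, $\Gamma$-invariant harmonic function on $X$, push it down to $M$, conclude it is constant, and finish by uniqueness of the representing measure. The genuine gap is precisely at the step you yourself label ``the main obstacle'': you never prove that $u$ is constant, and you misdiagnose what it requires. Constancy of $u$ does not need ergodicity of the $\Gamma$-action on $(\partial\hat{X},\mu_o)$, nor any machinery of Ledrappier--Wang type; it is the elementary Liouville property of finite-volume manifolds: a complete Riemannian manifold of finite volume is parabolic (for instance by the volume-growth criterion $\int^\infty t\,dt/\mathrm{Vol}\,B_t(p)=\infty$), hence every positive superharmonic, in particular every positive harmonic, function on $M$ is constant. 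This is exactly how the paper disposes of the step (``$M$ has finite volume, so $H$ must be constant''), and the logical order there is the reverse of what you propose: ergodicity of $\Gamma$ on $(\partial\hat{X},\nu_o)$ (Lemma~\ref{lem:ergodic}) is \emph{deduced} from this Liouville property together with uniqueness of the harmonic measure, rather than being an input to it. As written, deferring the central step to machinery that ``appears to supply exactly what is needed'' leaves the proof incomplete.

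Two smaller points. First, the passage from $u\equiv c$ to $\nu_o=c\mu_o$ should not go through the Radon--Nikodym density $d\nu_o/d\mu_o$, since absolute continuity is not known a priori: instead, transport $\nu_o/c$ to $\partial_\Delta X$ via the identification of Theorem~\ref{thm:main}; because every point of the Martin boundary is minimal (Proposition~\ref{prop:minimal}), $\nu_o/c$ is a representing measure of the constant function $1$ carried by minimal points, and Choquet uniqueness gives $\nu_o/c=\mu_o$ directly, hence $\nu_x=c\mu_x$ for all $x$. Second, your existence half (the cocycle $d\mu_y/d\mu_x(\xi)=e^{-h_{vol}(\xi(y)-\xi(x))}$ and $\Gamma$-equivariance by naturality under isometries) is correct and matches what the paper obtains from its explicit construction of the harmonic measure with $\mu_p=(\Phi_p)_*\lambda_p$ in Theorem~\ref{thm:full_support}; also note that in the paper's normalization the exponent $h_{vol}$ is built into the definition of a PS measure, so there is nothing to verify about a critical exponent.
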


Using the above results and an extension of a result of Eberlein, we will prove that the geodesic flow is topologically transitive.

\begin{corollary}
\label{cor:trans}
If $M$ is a non-flat finite volume harmonic manifold without conjugate points, then the geodesic flow is topologically transitive on $SM$.
\end{corollary}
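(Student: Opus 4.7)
The plan is to leverage Theorem \ref{thm:compact} to construct a canonical fully-supported, geodesic-flow invariant Borel measure on $SM$, and then to conclude topological transitivity via an Eberlein-type criterion.

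First, using the Hopf parameterization of $SX$ as (forward endpoint) $\times$ (backward endpoint) $\times$ (time), I would build a Bowen-Margulis-Sullivan measure from the unique Patterson-Sullivan family $\{\nu_x\}$ of Theorem \ref{thm:compact}. Namely, one multiplies $d\nu_o(\xi)\, d\nu_o(\eta)\, dt$ by the exponential of a suitable combination of Busemann cocycles (of weight $h_{vol}$) to absorb the conformal transformation rule and obtain a $\Gamma$-invariant measure $\tilde m$ on $SX$ that descends to a geodesic-flow invariant measure $m$ on $SM$. Since by part (2) of Theorem \ref{thm:compact} the measure $\nu_o$ is (in measure class) the harmonic measure $\mu_o = (\Phi_o)_*\lambda_o$, and $\lambda_o$ is the Lebesgue measure on $S_oX$, the measure $\nu_o$ has full support on $\partial\hat X$. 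It follows that $m$ has full support on $SM$.

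Second, I would apply an Eberlein-type theorem asserting the following: if $SM$ carries a finite geodesic-flow invariant Borel measure of full support that is ergodic under the flow, then almost every orbit is dense, hence the flow is topologically transitive. Finiteness of $m$ should follow from the hypothesis $\Vol(M)<\infty$ (indeed, one expects $m$ to agree with the Liouville measure up to scaling, as happens on rank one locally symmetric spaces), and ergodicity should follow from the uniqueness of the PS measure via a Hopf-style argument on $\partial \hat X \times \partial \hat X$.

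The main obstacle is the ergodicity step. In the classical setting of finite volume nonpositively curved manifolds satisfying the visibility axiom, ergodicity of the BMS measure (and hence topological transitivity) is the content of Eberlein's theorem, whose proof uses visibility to control the behavior of geodesic endpoints under the flow. For harmonic manifolds without conjugate points the visibility axiom may well fail, and the \emph{extension} of Eberlein referenced in the introduction must replace geometric visibility by the boundary structure afforded by Theorem \ref{thm:main}: the identification of Busemann and Martin boundaries, combined with the Ranjan--Shah homeomorphism $\Phi_p$, provides enough continuity of the endpoint map to run a Hopf-style density argument. Establishing (or citing) this extension is the technical heart of the proof of Corollary \ref{cor:trans}; once it is in hand, the combination with Step 1 and 2 finishes the argument.
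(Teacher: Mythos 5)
Your proposal has a genuine gap at exactly the step you flag as the ``technical heart,'' and that step does not become available from the results of the paper. First, the construction itself is problematic: building a Bowen--Margulis--Sullivan measure in Hopf coordinates presupposes that (almost) every pair of distinct points $(\xi,\eta)\in\partial\hat X\times\partial\hat X$ is joined by a geodesic and that the endpoint map $v\mapsto(v(+\infty),v(-\infty),t)$ gives usable coordinates on $SX$. For a harmonic manifold without conjugate points there is no visibility axiom and no $\mathrm{CAT}(0)$ structure (indeed the interesting case is precisely the possibly higher-rank one), so neither the surjectivity of this map nor the finiteness and full support of the resulting measure $m$ on $SM$ is established; your guess that $m$ agrees with Liouville is also unproved. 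Second, and more seriously, ergodicity of the geodesic flow with respect to $m$ is a far stronger statement than anything proved in the paper, and it does not follow from uniqueness of the Patterson--Sullivan measure: a Hopf-style argument needs contraction/transversality properties of stable and unstable horospheres that are exactly what is unknown in this setting. You have therefore reduced the corollary to an unproved (and harder) theorem.

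The paper's route avoids flow-ergodicity entirely. Its ``extension of Eberlein'' (Theorem~\ref{thm:top_trans}) is a purely topological equivalence: under bounded curvature and continuity of the stable Riccati solution $v\mapsto U^s(v)$, the geodesic flow on $SM$ is topologically transitive if and only if some point of $\partial\hat X$ has a dense $\Gamma$-orbit; its proof uses only Poincar\'e recurrence (to get the duality condition) and the locally uniform convergence of Busemann functions, not any invariant measure on $SM$. The dense $\Gamma$-orbit is then produced by a soft argument: ergodicity of the $\Gamma$-action on $(\partial\hat X,\nu_o)$ (Lemma~\ref{lem:ergodic}), proved by observing that a $\Gamma$-invariant set $A$ with $\nu_o(A)>0$ yields a $\Gamma$-invariant harmonic function on $X$, which descends to a harmonic function on the finite volume quotient $M$, hence is constant, and then invoking uniqueness of the harmonic measure; since $\nu_o$ is Lebesgue on $S_oX\cong\partial\hat X$ it has full support and a countable basis, so almost every boundary point has dense $\Gamma$-orbit. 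Finally, continuity of $U^s$ is obtained from $\tr U^s\equiv h_{vol}$ together with the monotone convergence $U^s_T\to U^s$ and Dini's theorem. If you want to salvage your approach you would need to supply the missing ergodicity theorem for the BMS measure; as written, replacing your Step 2 by the paper's boundary-ergodicity argument and Theorem~\ref{thm:top_trans} is the efficient fix.
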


\subsection{History:} In 1944 Lichnerowicz~\cite{lichnerowicz44} proved that every harmonic manifold of dimension three or less is either flat or a rank one locally symmetric space. Lichnerowicz conjectured that the same is true for four dimensional harmonic manifolds. In 1949 Walker~\cite{W1949} verified this conjecture. Since then it has been an open problem to characterize the harmonic manifolds in higher dimensions. In 1990, Szab{\'o}~\cite{szabo90} proved that any compact simply connected harmonic manifold is a rank one symmetric space of compact type.

Based on the work of Lichnerowicz, Szab{\'o}, and Walker it was natural to suspect that all harmonic manifolds are either flat or a rank one locally symmetric space. However in 1992, Damek and Ricci~\cite{DR92} constructed nonsymmetric homogeneous harmonic manifolds. In 2006, Heber~\cite{heber06} showed that any homogeneous harmonic manifold is either of the type constructed by Damek and Ricci, flat, or a rank one symmetric space. 

The nonsymmetric examples of Damek and Ricci do not have compact quotients and it is reasonable to suspect that compact harmonic manifolds are locally symmetric. This has been verified in a number of cases. In 1995 Besson, Courtois, and Gallot~\cite{BCG95,BCG96} applied their minimal entropy rigidity results to show that any simply connected negatively curved harmonic manifold with compact quotient is a rank one symmetric space. This corollary of their work used deep results of Foulon and Labourie~\cite{FL92} and Benoist, Foulon, and Labourie~\cite{BFL92}.

Recent results of Nikolayevsky~\cite{nik2005} and Ranjan and Shah~\cite{rs2002} imply that any harmonic manifold without conjugate points and zero volume growth entropy is flat. In 2009, Knieper~\cite{knieper11} proved that any compact non-flat harmonic manifold without conjugate points is a rank one locally symmetric space assuming either nonpositive curvature (or more generally no focal points) or Gromov hyperbolic fundamental group. 

Knieper also extended the well known definition of the rank of a nonpositively curved manifold to manifolds without conjugate points. He then proved that every rank one compact harmonic manifold is locally symmetric. It is well known that the Martin and Busemann boundary differ for nonpositively curved manifolds of higher rank. Moreover the geodesic flow is not topologically transitive for these manifolds. In particular, the results of this paper show that higher rank harmonic manifolds without conjugate points (if they exist at all) are interesting Riemannian manifolds.

\subsection*{Acknowledgements}

I would like to thank Ralf Spatzier for many helpful conversations.  I also thank the National Science Foundation for support through the grant DMS-0602191.

\section{Preliminaries}

In this article all Riemannian manifolds are assumed to be complete and have $C^\infty$ metrics. Given a Riemannian manifold $M$, the unit tangent bundle will be denoted by $SM$. Given a vector $v \in SM$, $\gamma_v:\RR \rightarrow M$ will denote the unique geodesic with $\gamma_v^\prime(0)=v$. Finally $g^t:SM \rightarrow SM$ will denote the geodesic flow on $SM$. 

We begin our preliminaries by discussing two compactifications of non-compact Riemannian manifolds. Wang's survey paper~\cite{wang2011} provides additional details on these compactifications and Ancona's survery paper~\cite{ancona1990} is an excellent reference on the Martin Boundary. 

\subsection{The Busemann compactification and Patterson-Sullivan measures}

Suppose $X$ is a non-compact Riemannian manifold and fix $o \in X$. Let $C(X)$ be the space of continuous functions  $X \rightarrow \RR$ with the topology of uniform convergence on compact sets. Consider the embedding $X \rightarrow C(X)$ by mapping $y \in X$ to the function $b_y$ given by
\begin{align*}
b_y(x) = d(x,y)-d(y,o).
\end{align*}
As each $b_y$ is 1-Lipschitz, the image of $X$ under this map is a relatively compact subset of $C(X)$. We then define the \emph{Busemann compactification} $\hat{X}$ of $X$ to be the compactification of $X$ in $C(X)$ with respect to this embedding. The Busemann boundary of $X$ is the set $\partial \hat{X} = \hat{X} \setminus X$. Notice that the action of $\text{Isom}(X)$ extends to an action by homeomorphisms on $\hat{X}$. For $\xi \in \partial \hat{X}$ and $g \in \text{Isom}(X)$ this action is given by:
\begin{align*}
(g \cdot \xi)(x) = \xi(g^{-1} x) -\xi(g^{-1}o).
\end{align*}

Ledrappier and Wang~\cite{LW10} considered Patterson-Sullivan measures on the Busemann boundary of general non-compact manifolds. Let $M$ be a Riemannian manifold with non-compact universal Riemannian cover $X$ and let $\Gamma=\pi_1(M) \subset \text{Isom}(X)$ be the deck transformations of the covering $X \rightarrow M$.  For $p \in X$ consider the following limit:
\begin{align*}
\lim_{r\rightarrow \infty} \frac{\log \text{Vol}_X  B_r(p)}{r}.
\end{align*}
When this limit exists and does not depend on the choice of $p \in X$, we denote this number by $h_{vol}$ and call it the \emph{volume growth entropy of $X$}. When $M$ is compact, Manning~\cite{manning79} showed that the limit above always exists and is independent of $p$. Further Manning showed that when $M$ is compact and nonpositively curved $h_{top}=h_{vol}$, where $h_{top}$ is the topological entropy of the geodesic flow on $SM$. Freire and Ma\~{n}\'{e}~\cite{FM82} generalized this last result and showed that $h_{top}=h_{vol}$ when $M$ is compact and has no conjugate points. 

\begin{definition}
Suppose $M$ is a Riemannian manifold. Let $X$ be the universal Riemannian cover of $M$ with deck transformations $\Gamma=\pi_1(M) \subset \text{Isom}(X)$. If $h_{vol}$ exists and is finite, a family of measures $\{ \nu_x : x \in X\}$ on $\partial \hat{X}$ is a (normalized) \emph{$\Gamma$-Patterson-Sullivan measure} if
\begin{enumerate}
\item $\nu_o(\partial \hat{X})=1$,
\item for any $x,y \in X$ the measures $\nu_x,\nu_y$ have the same measure class and satisify 
\begin{align*}
\frac{d\nu_x}{d\nu_y}(\xi) = e^{-h_{vol}(\xi(x)-\xi(y))},
\end{align*}
\item for any $g \in \Gamma$, $\nu_{gx} = g_*\nu_x$.
\end{enumerate}
\end{definition}

\subsection{The Martin compactification}

Suppose $(X,g)$ is a complete non-compact Riemannian manifold and $\Delta: L^2(X) \rightarrow L^2(X)$ is the Laplace-Beltrami operator on $X$. This is a symmetric operator and the bottom of the spectrum is given by
\begin{align*}
\lambda_{min}=\lambda_{min}(X,g) = \inf \left\{ \frac{\int \norm{\nabla f}^2 dx}{\int \abs{f}^2 dx} : f \in C^{\infty}_K(X)\right\}.
\end{align*}
If $\lambda_{min} >0$ or more generally $X$ is nonparabolic then $X$ has a \emph{minimal positive Green's function} $G(x,y)$ this is a positive function on $M \times M \setminus \{ (x,x) \in M\times M\}$ such that $\Delta_x G(x,y) = -\delta_y$ in the sense of distributions and $G(x,y)$ is the minimal function with these properties. 

Using this Green's function we can define the Martin compactification as follows. Fix $o \in X$ and consider the space of positive harmonic functions on $X$ normalized at $o$:
\begin{align*}
\mathcal{K}_o=\mathcal{K}_o(X) = \{ h : \ h>0, \ \Delta h =0, \ h(o)=1\} \subset C(X).
\end{align*}
By Harnack's inequality this is a compact, convex set in the topology of uniform convergence on compact sets.

Next consider the map
\begin{align*}
y \in X \rightarrow k_y(x) = \frac{G(x,y)}{G(o,y)}.
\end{align*}
Notice that $k_y$ is a harmonic function on $X \setminus \{y\}$. A sequence $y_n \in X$ is said to be a \emph{Martin sequence} if $k_{y_n}$ converges locally uniformly to a function $h$. Using elliptic theory, every sequence $k_{y_n}$ with $y_n \rightarrow \infty$ has a subsequence that converges and any limit point is in $\mathcal{K}_o$. The \emph{Martin boundary} $\partial_{\Delta} X \subset \mathcal{K}_o$ is defined to be the end points of all Martin sequences. This set will be compact in $\mathcal{K}_o$. The \emph{Martin compactification} of $X$ is the set $X \sqcup \partial_{\Delta} X$ with the unique topology making it a compactification of $X$. Notice that the action of $\text{Isom}(X)$ extends to an action by homeomorphisms on $X \cup \partial_{\Delta} X$. For $h \in \partial_{\Delta} X$ and $g \in \text{Isom}(X)$ this action is given by:
\begin{align*}
(g \cdot h)(x) = h(g^{-1} x)/ h(g^{-1}o).
\end{align*}
See Ancona's survey~\cite[Section II.2]{ancona1990} for proofs of the assertions in the above paragraph.

As $\mathcal{K}_o$ is a compact, convex set it has distinguished points: the extreme points. In this context, these points are often called minimal.

\begin{definition}
The \emph{minimal points} of $\mathcal{K}_o$ are the functions $h \in \mathcal{K}_o$ such that if $h_1,h_2 \in \mathcal{K}_o$, $\lambda \in [0,1]$, and $h=\lambda h_1+(1-\lambda)h_2$ then $h=h_1=h_2$.
\end{definition}

Let $\mathcal{K}_o^* \subset \mathcal{K}_o$ denote the minimal points of $\mathcal{K}_o$. It is well known that the Martin boundary contains all the minimal points (see for instance~\cite[Theorem 2.1]{ancona1990}), i.e.
\begin{align*}
\mathcal{K}_o^* \subset \partial_{\Delta} X \subset \mathcal{K}_o,
\end{align*}
and in particular any Borel measure on $\mathcal{K}_o^*$ can be realized as a Borel measure on $\partial_{\Delta} X$. Using Choquet theory (see for instance~\cite[Section II.1]{ancona1990}), for any $f \in \mathcal{K}_o$ there exists a unique Borel measure $\mu$ on $\mathcal{K}_o^*$ such that 
\begin{align*}
f = \int_{\mathcal{K}_o^*} h d\mu(h).
\end{align*}
This gives rise to the harmonic measure on $\partial_{\Delta} X$. Let $\mu$ be the unique Borel measure on $\partial_{\Delta} X$ supported on $\mathcal{K}_o^*$ and such that
\begin{align*}
1 = \int_{\partial_{\Delta} X} h(x) d\mu(h)
\end{align*}
for all $x \in X$. 

\begin{definition}
With the notation above, the \emph{harmonic measure} on $\partial_{\Delta} X$ is the family $\{ \mu_x : x \in X\}$ where $d\mu_x (h) = h(x) d\mu(h)$.
\end{definition}

Next define $\Pi = \supp(\mu)$ to be the \emph{Poisson boundary of $X$}. A motivation for the definitions above is contained in the following proposition.

\begin{proposition}
With the notation above, for each $f \in L^{\infty}(\Pi,\mu)$ the function
\begin{align*}
 H_f(x) = \int_{\Pi} f(h) h(x) d\mu(h)=\mu_x(f)
\end{align*}
is an bounded harmonic function on $X$.
\end{proposition}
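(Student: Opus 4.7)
The plan is to verify boundedness and harmonicity separately; both follow fairly directly from the defining properties of $\mu$ and the minimal points.

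For boundedness, recall the normalization $1 = \int_{\partial_\Delta X} h(x)\,d\mu(h)$ for every $x \in X$. Since each $h \in \partial_\Delta X$ is a positive harmonic function on $X$, we have $h(x) \geq 0$ for all $x$, and hence
\begin{align*}
\abs{H_f(x)} \leq \int_\Pi \abs{f(h)}\, h(x)\, d\mu(h) \leq \norm{f}_\infty \int_\Pi h(x)\, d\mu(h) = \norm{f}_\infty
\end{align*}
for all $x \in X$. This gives $\norm{H_f}_\infty \leq \norm{f}_\infty$.

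For harmonicity I would argue in the sense of distributions and then appeal to elliptic regularity. Fix $\phi \in C^\infty_c(X)$ with support $K \subset X$. By Harnack's inequality, since every $h \in \mathcal{K}_o$ satisfies $h(o) = 1$, there exists a constant $C_K > 0$, depending only on $K$ and $o$, such that $h(x) \leq C_K$ for all $x \in K$ and all $h \in \mathcal{K}_o$. Thus the integrand
\begin{align*}
(x,h) \mapsto f(h)\, h(x)\, \Delta \phi(x)
\end{align*}
is bounded by $\norm{f}_\infty \cdot C_K \cdot \norm{\Delta\phi}_\infty$ on $K \times \Pi$, which is integrable with respect to $dx \otimes d\mu$. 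Fubini's theorem then gives
\begin{align*}
\int_X H_f(x)\, \Delta\phi(x)\, dx = \int_\Pi f(h) \left( \int_X h(x)\, \Delta\phi(x)\, dx \right) d\mu(h).
\end{align*}
Each $h$ is harmonic on $X$, so $\int_X h(x)\,\Delta\phi(x)\,dx = 0$, and the right-hand side vanishes. Since $\phi$ was arbitrary, $\Delta H_f = 0$ in the sense of distributions, and elliptic regularity promotes $H_f$ to a smooth harmonic function.

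The only subtle point is the justification of the exchange of integration, but this is handled uniformly on compact sets by Harnack; the boundedness estimate is essentially a one-line consequence of the normalization of $\mu$. No deeper input is needed for this proposition — the real content lies in the earlier construction of the measure $\mu$ via Choquet theory.
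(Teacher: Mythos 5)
Your argument is correct; note that the paper itself gives no proof of this proposition (it is stated as standard, with the background facts deferred to Ancona's survey), so there is no in-paper argument to compare against --- but your route (uniform bound from the normalization of $\mu$, then distributional harmonicity plus Weyl's lemma) is the standard one and it works. The boundedness estimate is exactly right: positivity of each $h$ and $\int_{\Pi} h(x)\,d\mu(h)=1$ give $\abs{H_f(x)}\leq \norm{f}_\infty$. For the Fubini step, two small points are worth making explicit: first, joint measurability of $(x,h)\mapsto f(h)h(x)$, which holds because $(x,h)\mapsto h(x)$ is continuous on $X\times \mathcal{K}_o$ in the topology of locally uniform convergence and $f$ is $\mu$-measurable; second, $\mu$ is a finite measure (indeed $\mu(\Pi)=\int h(o)\,d\mu(h)=1$), so your bound on $K\times\Pi$ does give integrability. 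In fact the Harnack constant $C_K$ is not even needed for this: Tonelli together with the normalization gives
\begin{align*}
\int_K\int_\Pi \abs{f(h)}\,h(x)\,\abs{\Delta\phi(x)}\,d\mu(h)\,dx \;\leq\; \norm{f}_\infty\,\norm{\Delta\phi}_\infty \operatorname{Vol}(K) \;<\;\infty,
\end{align*}
though invoking Harnack as you do is also perfectly legitimate (it is the same compactness mechanism that makes $\mathcal{K}_o$ compact). Finally, the self-adjointness of the Laplace--Beltrami operator justifies $\int_X h\,\Delta\phi\,dx=0$ for each harmonic $h$ and hence $\Delta H_f=0$ distributionally, and Weyl's lemma upgrades $H_f$ to a genuine (smooth) bounded harmonic function, as you say.
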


In some cases the Martin boundary and Poisson boundary are familiar compactifications. 

\begin{theorem}
\cite[Section V, Theorem 6.2]{ancona1990}
Suppose that $X$ is Gromov hyperbolic and $\lambda_{min} >0$. Then the identity map $id:X \rightarrow X$ extends to a homeomorphism of the Martin boundary and the geometric boundary. Further, the Poisson boundary coincides with the Martin boundary.
\end{theorem}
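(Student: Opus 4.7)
The plan is to prove the theorem by constructing a continuous extension of the identity map $X \to X$ that sends the geometric boundary $\partial X$ to the Martin boundary $\partial_{\Delta} X$. The strategy hinges on Ancona's inequality: there exists a constant $C > 0$ such that whenever $y$ lies within bounded distance of a geodesic joining $x$ and $z$, and $d(x,y), d(y,z)$ are both large, one has
\begin{align*}
G(x,z) \leq C\, G(x,y)\, G(y,z).
\end{align*}
I would prove this inequality first, as it is the technical core of the argument. The proof uses the positivity of $\lambda_{min}$ to obtain a uniform exponential decay rate for the Green's function, together with a super-solution construction and the minimum principle applied on shells built from geodesic tubular neighborhoods; Gromov hyperbolicity guarantees that such shells genuinely separate the side of $x$ from the side of $z$.

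Once Ancona's inequality is in hand, I would define a map $\partial X \to \partial_{\Delta} X$ as follows. For $\xi \in \partial X$ and any sequence $y_n \to \xi$, I would show that $k_{y_n}$ converges locally uniformly to a harmonic function $k_\xi$. Well-definedness reduces to showing that two sequences $y_n, y_n' \to \xi$ produce the same Martin kernel. Picking an intermediate point $w_n$ on a geodesic close to both $y_n$ and $y_n'$, which exists by the thin-triangles condition, and applying Ancona's inequality in both directions pins down the ratio $G(x,y_n)/G(x,y_n')$ up to a multiplicative factor tending to $1$. This forces $k_{y_n}(x)/k_{y_n'}(x) \to 1$ for every $x$, so $k_\xi$ is well-defined.

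Next I would check that $\xi \mapsto k_\xi$ is continuous and injective. Continuity is a diagonal argument using the uniformity of the constant $C$. Injectivity follows because two distinct geometric boundary points are separated by disjoint Gromov horoballs, so Ancona's inequality applied along rays to $\xi$ and to $\eta$ forces $k_\xi$ and $k_\eta$ to decay at different exponential rates along those rays, hence to disagree. Compactness of $\overline{X}$ and of $X \sqcup \partial_{\Delta} X$, together with the observation that every Martin sequence has a subsequence converging in $\overline{X}$ to a geometric boundary point, then upgrades this bijection to a homeomorphism.

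To identify the Poisson boundary with the Martin boundary, I would show that every $k_\xi$ is minimal. If $k_\xi = \lambda h_1 + (1-\lambda) h_2$ with $h_i \in \mathcal{K}_o$ and $\lambda \in (0,1)$, then each $h_i$ admits a Choquet representation as an integral over $\partial_{\Delta} X$; iterated applications of Ancona's inequality along a ray to $\xi$ force the representing measures to concentrate on $\{k_\xi\}$, yielding minimality and thus $\mathcal{K}_o^* = \partial_{\Delta} X$. The main obstacle in the entire argument is establishing Ancona's inequality itself, which requires a delicately chosen super-solution and fails without both hypotheses: $\lambda_{min} > 0$ supplies the exponential decay of $G$, while Gromov hyperbolicity supplies the geometric separation that makes the iteration along rays possible.
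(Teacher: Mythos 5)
This theorem is not proved in the paper at all --- it is imported verbatim from Ancona's survey \cite[Section V, Theorem 6.2]{ancona1990} --- and your outline reproduces exactly the strategy of that cited source: first establish Ancona's Harnack-at-infinity inequality for the Green's function (using $\lambda_{min}>0$ for the exponential decay and Gromov hyperbolicity for the geometric separation), then use it to show Martin kernels along sequences converging to a geometric boundary point have a common limit, identify the two compactifications, and prove minimality of every Martin point, which gives the Poisson-boundary statement. So your approach coincides with the proof behind the citation, and I see no genuine gap to flag beyond the expected technical work in the inequality itself.
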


Anderson and Schoen~\cite{AS85} proved the above theorem in the case of simply connected manifolds with pinched negative curvature.

\subsection{Harmonic Manifolds}

In this subsection we record some properties of harmonic manifolds that will be useful later. First we recall the definition of a harmonic manifold. 

\begin{definition}
\label{defn:harm}
A complete Riemannian manifold $M$ is called \emph{harmonic} if for any $y \in M$ there exists a neighborhood $U$ of $y$ such that the function 
\begin{align*}
x \in U\setminus \{y\} \rightarrow \Delta_x d(x,y) \in \RR
\end{align*}
depends only on $d(x,y)$.
\end{definition}

If $M$ is a Riemannian manifold, $f:M \rightarrow \RR$ is smooth, and $\norm{\nabla f} \equiv 1 $ near $p$, then the mean curvature of the hypersurface $f^{-1}(0)$ at $p \in f^{-1}(0)$ is given by $\Delta f(p)$. So the definition above says that small geodesic spheres have constant mean curvature.

The proof of next lemma can be found in the introduction of Szabo's paper~\cite{szabo90}.

\begin{lemma}
Let $M$ be a harmonic manifold. Then the following are equivalent:
\begin{enumerate}
\item $M$ has no conjugate points,
\item the universal cover of $M$ is non-compact,
\end{enumerate}
In any of above hold and $X$ is the universal Riemannian cover of $M$ then 
\begin{align*}
(x,y) \in X \times X \setminus \{(x,x) : x \in X \} \rightarrow \Delta_x d(x,y) \in \RR
\end{align*}
is a function of $d(x,y)$. 
\end{lemma}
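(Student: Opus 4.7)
The plan is to first establish the equivalence $(1) \Leftrightarrow (2)$, and then use the no-conjugate-points hypothesis to extend the harmonic identity globally.

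For $(1) \Rightarrow (2)$, I would apply the standard observation that if $M$ has no conjugate points, then the exponential map at every point of the universal Riemannian cover $X$ is a local diffeomorphism without critical points; combined with completeness and simple-connectedness, a Cartan-Hadamard style argument forces $\exp_p \colon T_p X \to X$ to be a global diffeomorphism, so $X \cong \RR^n$ and is non-compact. For the reverse implication I would argue the contrapositive: if some geodesic from $y \in M$ has a conjugate point at distance $T$, then the harmonic condition, reformulated in terms of the volume density $\theta_y(r, v)$ in normal polar coordinates (which must depend only on $r$), forces every geodesic from $y$ to have a conjugate point at the same distance $T$. This is the Blaschke condition at $y$, and the Allamigeon-Warner theorem then implies that all geodesics from $y$ are closed with a common length, which makes the universal cover compact.

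For the global formula, assume no conjugate points, so that $d(\cdot, y)$ is smooth on $X \setminus \{y\}$. The harmonic hypothesis supplies a function $F$ defined on a small interval with $\Delta_x d(x,y) = F(d(x,y))$ whenever $d(x,y)$ is small. To propagate this identity, I would fix a unit-speed geodesic $\gamma$ starting at $y$ and set $u(t) = \Delta_{\gamma(t)} d(\gamma(t), y)$. Then $u$ satisfies the scalar Riccati-type ODE $u'(t) = -\tr(U(t)^2) - \Ric(\gamma'(t), \gamma'(t))$, where $U(t)$ is the shape operator of the geodesic sphere of radius $t$ at $\gamma(t)$. The harmonic identity near $y$ gives that $u$ satisfies the same ODE with the same germ at $t=0$ along every geodesic from $y$; differentiating that identity repeatedly one extracts the full chain of trace identities $\tr(U(t)^k) = G_k(t)$ with each $G_k$ independent of $\gamma$, so $U(t)$ has a $\gamma$-independent characteristic polynomial. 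Uniqueness of ODE solutions then extends the local formula along the entire geodesic ray, and ranging over all directions from $y$ yields $\Delta_x d(x,y) = F(d(x,y))$ on all of $X \setminus \{y\}$.

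The main obstacle is precisely this global extension: a priori the Ricci and higher curvature terms in the Riccati equation depend on the direction of $\gamma$, and one must exploit not only the harmonic condition itself but its full infinitesimal consequences along the geodesic to force directional independence. A conceptually cleaner alternative, and likely the route adopted by Szab\'o, is to invoke the fact that harmonic metrics are real-analytic in harmonic coordinates: the identity $\Delta_x d(x,y) = F(d(x,y))$ holds on a nonempty open set, and by analytic continuation along the connected, conjugate-point-free manifold $X \setminus \{y\}$ it extends throughout, with $F$ now defined on all of $(0, \infty)$.
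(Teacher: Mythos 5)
The paper itself gives no proof of this lemma: it simply refers to the introduction of Szab\'o's paper (with Besse, Chapter 6, as the background), where the argument is exactly the analyticity-plus-Allamigeon route that you only offer as a fallback. Your outline of the equivalence is the standard one and is essentially sound: Cartan--Hadamard for (1)$\Rightarrow$(2), and for the converse the radiality of the volume density together with the Allamigeon--Warner theorem. Two caveats, though. The theorem must be applied to the simply connected cover $X$ (which is harmonic because the defining condition is local), not to $M$ itself; and the claim that the density $\theta_y(r,v)$ is independent of $v$ \emph{all the way up to the first conjugate point} is not part of the local harmonic hypothesis --- it already needs the analyticity input (harmonic $\Rightarrow$ Einstein $\Rightarrow$ the metric is real-analytic in harmonic coordinates, DeTurck--Kazdan) and continuation along each ray, i.e.\ the same tool you invoke only at the end. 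Also, compactness of $X$ is all that is needed from Allamigeon--Warner; ``all geodesics closed of common length'' is more than the theorem is being used for.

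The genuine gap is in your primary mechanism for the global radial formula. The relation $u'=-\tr(U^2)-\Ric(\gamma',\gamma')$ is not an ODE for $u=\tr U$, so ``uniqueness of ODE solutions'' cannot be invoked; and differentiating the local identity $u(t)=F(t)$ does not produce the chain $\tr(U^k)=G_k(t)$. What one actually gets are mixed Ledger-type identities: first that $\tr(U^2)+\Ric(\gamma',\gamma')$ is radial (so $\tr(U^2)$ is radial only after knowing the Einstein property), then that $\tr(U^3)+\tr(R\,U)$ is radial, and so on; these never separate the powers of $U$, so no direction-independent characteristic polynomial comes out of this argument. The correct repair is precisely your alternative: with the metric real-analytic and $\exp_y$ a global diffeomorphism, for each unit $v$ at $y$ the function $r\mapsto \Delta_x d(x,y)$ evaluated at $x=\exp_y(rv)$ is real-analytic on $(0,\infty)$, agrees for small $r$ with the corresponding function for any other direction $w$, hence agrees for all $r$; this defines $F_y$ on $(0,\infty)$ and gives direction-independence. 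Finally, the lemma asserts a \emph{single} function of $d(x,y)$, while the paper's definition of harmonic allows the local profile to depend on the centre $y$; your write-up never addresses this. It follows, for instance, from the symmetry $\theta_y(x)=\theta_x(y)$ of the volume density, the identity $\Delta_x d(x,y)=\partial_r\log\big(r^{n-1}\theta_y\big)$, and connectedness of $X$, but the step should be stated.
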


Hence all geodesics spheres in $X$ of a fixed radius $r>0$ have the same constant mean curvature. Using this observation one can prove the next lemma (see Knieper~\cite[Section 2]{knieper11} for details).

\begin{lemma}
\label{lem:volume_ratio}
Let $X$ be a simply connected harmonic manifold without conjugate points. Then the volume growth entropy, $h_{vol}$, exists and is finite. Further, if $S_r(p)$ is the geodesic sphere of radius $r$ about $p \in X$, then $\text{Vol}_X S_r(p)$ does not depend on $p \in X$ and if $\Theta(r) = \text{Vol}_X S_r(p)$ then
\begin{align*}
\lim_{r \rightarrow \infty} \frac{\Theta^\prime(r)}{\Theta(r)} = h_{vol}.
\end{align*}
\end{lemma}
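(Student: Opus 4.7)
The plan is to work in geodesic polar coordinates around a base point $p \in X$. Since $X$ is simply connected and has no conjugate points, it is standard that the exponential map $\exp_p : T_p X \to X$ is a diffeomorphism, so the polar coordinates $(r,\theta) \in (0,\infty) \times S^{n-1}$ are global. Write the volume form on $X$ in these coordinates as $A(r,\theta)\, dr\, d\omega$, where $d\omega$ is the usual measure on $S^{n-1}$, and observe that $A(r,\theta)/r^{n-1} \to 1$ as $r \to 0^+$.

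The previous lemma yields a function $\sigma:(0,\infty) \to \RR$ with $\Delta_x d(x,y) = \sigma(d(x,y))$ for $x \neq y$. The key identification is that at $x = \exp_p(r\theta)$, the quantity $\Delta_x d(x,p)$ is precisely the mean curvature of the geodesic sphere $S_r(p)$ at $x$, which in polar coordinates equals $\partial_r \log A(r,\theta)$. Thus $\partial_r \log A(r,\theta) = \sigma(r)$, and integrating in $r$ from $0$ together with the initial asymptotic forces $A(r,\theta)$ to depend on $r$ only. Consequently $\Theta(r) = \Vol_X S_r(p) = \omega_{n-1}\, A(r)$ is independent of $p$, and $\Theta'(r)/\Theta(r) = \sigma(r)$.

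It remains to show $\sigma(r)$ has a finite limit $L$ as $r \to \infty$. For this I would use the Riccati equation for the shape operator $U(r)$ of $S_r(p)$ along an outward unit geodesic, $U'(r) + U(r)^2 + R(r) = 0$; taking traces, using that harmonic manifolds are Einstein (so $\Ric \equiv c$ is constant on unit vectors), and applying the Cauchy--Schwarz bound $\tr(U^2) \geq \sigma^2/(n-1)$ yields the Riccati inequality
\begin{equation*}
\sigma'(r) \leq -\frac{\sigma(r)^2}{n-1} - c.
\end{equation*}
Absence of conjugate points keeps $\sigma$ finite on $(0,\infty)$, and the inequality forces $\sigma$ to be eventually monotone; together these yield convergence to some $L \in \RR$.

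To conclude, $\log \Theta(r) = \log \Theta(1) + \int_1^r \sigma(s)\, ds = Lr + o(r)$, so $\Vol_X B_r(p) = \int_0^r \Theta(s)\, ds$ also satisfies $\log \Vol_X B_r(p) = Lr + o(r)$ (with the $L = 0$ case handled separately via polynomial growth of the integral), giving $h_{vol} = L$. The main obstacle I anticipate is the convergence step for $\sigma$: when the Einstein constant $c$ is negative the Riccati inequality alone does not immediately force monotonicity, so one must combine it with a lower bound on $\sigma$ from the no-conjugate-points hypothesis, which is precisely the analysis carried out in Knieper~\cite{knieper11}.
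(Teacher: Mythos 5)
The first half of your argument is correct and is exactly the route the paper (via its citation to Knieper, Section 2) intends: global polar coordinates from the no-conjugate-points hypothesis, the identification of $\Delta_x d(x,p)$ with $\partial_r \log A(r,\theta)$, integration from $r=0$ using $A(r,\theta)\sim r^{n-1}$ to conclude that the density is radial and independent of the center, and hence $\Theta'(r)/\Theta(r)=\sigma(r)$.

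The genuine gap is the convergence of $\sigma(r)$, and the mechanism you propose does not close it. The traced Riccati relation only gives $\sigma'\leq -\sigma^2/(n-1)-c$; when the Einstein constant $c$ is negative this forces $\sigma'<0$ only where $\sigma>\sqrt{(n-1)\abs{c}}$ and says nothing below that threshold, so ``eventually monotone'' is not a consequence of the inequality. Moreover, even granting monotone decrease you would still need a finite, and in fact nonnegative, lower bound: if $\sigma(r)\rightarrow L<0$ (or $-\infty$) then $\Vol_X B_r(p)$ stays bounded, so $h_{vol}=0\neq L$ and the asserted identity fails; your final integration step silently assumes $L\geq 0$. Writing ``combine with a lower bound from no conjugate points, as in Knieper'' therefore defers precisely the step where all the work lies. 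The standard way to finish is to argue at the operator level rather than with the scalar inequality: by harmonicity, $\sigma(r)$ is the mean curvature of \emph{any} sphere of radius $r$ at \emph{any} of its points, so for a fixed $v\in SX$ it equals (up to the sign convention for the normal) $-\tr U^s_r(v)$, the trace of the shape operator at $\pi(v)$ of the sphere of radius $r$ centered at $\gamma_v(r)$. In a manifold without conjugate points these converge monotonically to the Riccati solution $U^s(v)$ --- this is Proposition~\ref{prop:riccati_basic} of the paper --- so $\sigma(r)\rightarrow -\tr U^s(v)$, which is finite and, by harmonicity, independent of $v$; it is nonnegative because $U^u(v)\geq U^s(v)$ while constancy of the traces gives $\tr U^u(v)=-\tr U^s(-v)=-\tr U^s(v)$. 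With $L=-\tr U^s\geq 0$ finite, your concluding computation $\log\Theta(r)=Lr+o(r)$ and $\Vol_X B_r(p)=\int_0^r\Theta(s)\,ds$ does yield $h_{vol}=L$, including the subexponential case $L=0$. (Alternatively, monotonicity of $\sigma$ can be obtained directly by comparing nested tangent spheres of radii $r<r'$ centered at $\gamma_v(-r)$ and $\gamma_v(-r')$, but one still needs the finite lower bound, which the Jacobi-tensor argument provides for free.)
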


The case in which $h_{vol}=0$ is well understood due to recent results of Nikolayevsky~\cite[Theorem 2]{nik2005} and Ranjan and Shah~\cite[Theorem 4.2]{rs2002}.

\begin{theorem}\cite{nik2005,rs2002}
\label{thm:flat}
Let $X$ be a simply connected harmonic manifold without conjugate points. If $h_{vol}=0$ then $X$ is isometric to $\RR^n$.
\end{theorem}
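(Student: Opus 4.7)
The strategy is to use harmonicity to reduce the analysis to a single ODE for the mean curvature of geodesic spheres, and then extract rigidity from the decay condition $h_{vol}=0$.

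First I would invoke the standard consequence of harmonicity that $X$ is Einstein: $\Ric=\rho g$ for some constant $\rho\in\RR$ (obtained by expanding $\Delta_x d(x,y)$ near $y$ and reading off the order-two coefficient). Fix $o\in X$, and for a unit-speed geodesic $\gamma$ with $\gamma(0)=o$ let $A(t)$ denote the shape operator of the distance sphere $S_t(o)$ at $\gamma(t)$, acting on $\gamma'(t)^\perp$. By harmonicity, the power sums $\sigma(t):=\tr A(t)=\Theta'(t)/\Theta(t)$ and $\tau(t):=\tr A(t)^2$ depend only on $t$, and by Lemma~\ref{lem:volume_ratio} together with $h_{vol}=0$ we have $\sigma(t)\to 0$ as $t\to\infty$. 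Tracing the Riccati equation $A'+A^2+R(\cdot,\gamma')\gamma'=0$ gives
\begin{align*}
\sigma'(t)+\tau(t)+\rho=0,
\end{align*}
with the Cauchy--Schwarz bound $\tau(t)\ge\sigma(t)^2/(n-1)$, where $n=\dim X$.

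Integrating this identity from $T$ to $R$ and letting $R\to\infty$, the left side tends to $-\sigma(T)$ (finite), while $\int_T^R\tau\,ds+\rho(R-T)$ diverges to $+\infty$ whenever $\rho>0$ because $\tau\ge 0$; this rules out $\rho>0$. Ruling out $\rho<0$ is more delicate: the Cauchy--Schwarz bound alone is too weak. Here I would invoke the Ledger trace identities, which force the Jacobi operator $R_t:=R(\cdot,\gamma'(t))\gamma'(t)$ to have an eigenvalue multiset $\{\mu_1,\ldots,\mu_{n-1}\}$ depending neither on $t$ nor on the geodesic $\gamma$ (with $\sum_i\mu_i=\rho$). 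A Riccati comparison then identifies the limit of the shape operator: using that all $\tr A(t)^k$ are functions of $t$ alone one shows $A(t)$ converges to a self-adjoint $A_\infty$ satisfying $A_\infty^2=-R_\infty$, and no-conjugate-points forces $A_\infty$ to be the positive square root. This yields the spectral formula
\begin{align*}
h_{vol}=\lim_{t\to\infty}\sigma(t)=\sum_{i=1}^{n-1}\sqrt{-\mu_i},
\end{align*}
which is strictly positive whenever some $\mu_i<0$; combined with $h_{vol}=0$ this forces every $\mu_i=0$ and hence $\rho=0$.

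With every $\mu_i=0$, the operator $R_t$ is self-adjoint with vanishing spectrum, so $R_t\equiv 0$ along $\gamma$. Since $\gamma$ and $o$ were arbitrary, every Jacobi operator of $X$ vanishes, hence every sectional curvature of $X$ is zero. A complete, simply connected, flat Riemannian manifold is isometric to $\RR^n$.

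The main obstacle is the comparison argument excluding $\rho<0$, namely establishing convergence of $A(t)$ and computing the spectrum of its limit in the harmonic setting. This requires a careful bookkeeping of the full family of Ledger trace identities $\tr A(t)^k=f_k(t)$, together with an asymptotic analysis of the matrix Riccati equation, and is where the computational depth of~\cite{nik2005, rs2002} lies.
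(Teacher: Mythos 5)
First, a point of comparison: the paper does not prove Theorem~\ref{thm:flat} at all --- it is quoted from \cite{nik2005,rs2002} --- so your argument has to stand on its own. Its opening moves do: harmonicity gives the Einstein condition, $\sigma=\Theta'/\Theta$ is radial and tends to $h_{vol}=0$ by Lemma~\ref{lem:volume_ratio}, the traced Riccati identity $\sigma'+\tau+\rho=0$ holds (and, together with the Einstein condition, is exactly why $\tau=\tr A^2$ is radial), and the integration argument excluding $\rho>0$ is correct.

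The genuine gap is the exclusion of $\rho<0$, and it is not merely ``computational depth'': the intermediate claims you invoke there are false for general harmonic manifolds. Harmonicity does not make all power sums $\tr A(t)^k$ radial (only $k=1$ by definition and $k=2$ via the traced Riccati equation; already the third Ledger identity mixes $\tr R_v^3$ with covariant derivatives of the curvature), and it does not force the Jacobi operator $R_v$ to have a spectrum independent of the point and of $v$. Constancy of that spectrum over all of $SX$ is the global Osserman condition; if it followed from harmonicity, the non-symmetric Damek--Ricci spaces --- which are harmonic --- would be Osserman, and they are not. Likewise the asymptotics $A(t)\to A_\infty$ with $A_\infty^2=-R_\infty$, and hence the formula $h_{vol}=\sum_i\sqrt{-\mu_i}$, presuppose that the Jacobi operator is (asymptotically) parallel along geodesics, which is a feature of rank-one symmetric spaces, not of harmonic spaces in general. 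So the step that carries the whole theorem is unsupported, and the route as described cannot be repaired by more careful bookkeeping of trace identities. The cited proofs proceed differently: roughly, Ranjan and Shah use that $h_{vol}=0$ makes every Busemann function harmonic and run a maximum-principle argument on the nonnegative harmonic function $b_v+b_{-v}$ to force it to vanish identically, whence the horospheres are totally geodesic and the Riccati equation gives $R(\cdot,\gamma')\gamma'\equiv 0$; Nikolayevsky instead exploits the special structure (exponential polynomial) of the volume density of a harmonic space, so that $h_{vol}=0$ forces subexponential growth and then flatness. To complete your proof you would need to replace the Osserman-type step with an argument of one of these kinds.
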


If $X$ has no conjugate points and $v \in SX$ then there is a function $b_v$ associated to $v$:
\begin{align*}
b_v(x) = \lim_{t \rightarrow \infty} d(x,\gamma_v(t))-t.
\end{align*}
Ranjan and Shah~\cite[Theorem 3.1, Theorem 5.1]{rs2003} proved the following useful results about these functions.

\begin{lemma}\cite{rs2003}
\label{lem:harm_basic}
Let $X$ be a simply connected harmonic manifold without conjugate points, then with the notation above
\begin{enumerate}
\item $\Delta b_v \equiv h_{vol}$ for each $v \in SX$, 
\item $e^{-h_{vol} b_v}$ is harmonic for each $v \in SX$,
\item the map $(v,x) \rightarrow \nabla^n b_v(x)$ is continuous for any $n \geq 0$,
\end{enumerate}
\end{lemma}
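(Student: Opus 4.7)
My plan is to establish the three items in order. The key auxiliary object is the radial function $\sigma : (0,\infty) \to \mathbb{R}$ defined by $\sigma(r) := \Delta_x d(x,y)$ whenever $d(x,y) = r$; by the preceding lemma this is well-defined. Integrating $\Delta d(\cdot,y) = \sigma(d(\cdot,y))$ over the ball $B_r(y)$ and applying the divergence theorem together with the coarea formula yields $\Theta'(r) = \sigma(r)\Theta(r)$, so Lemma~\ref{lem:volume_ratio} gives $\sigma(r) \to h_{vol}$ as $r \to \infty$.

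For (1), fix $v \in SX$ and set $y_t := \gamma_v(t)$. Since $X$ is simply connected without conjugate points, the exponential map at every point is a diffeomorphism, so $d(\cdot, y_t)$ is smooth on $X \setminus \{y_t\}$. The functions $u_t(x) := d(x,y_t) - t$ are 1-Lipschitz, converge uniformly on compact sets to $b_v$, and satisfy $\Delta u_t = \sigma(d(\cdot, y_t))$. For any compact $K \subset X$, the reverse triangle inequality yields $d(x, y_t) \geq t - \sup_{x \in K} d(x, \gamma_v(0))$, so $\Delta u_t \to h_{vol}$ uniformly on $K$. The Calder\'on--Zygmund estimates then bound $\{u_t\}$ in $W^{2,p}_{\mathrm{loc}}(X)$, and passing to the limit in $\Delta u_t = \sigma(d(\cdot,y_t))$ yields $\Delta b_v = h_{vol}$ weakly; elliptic regularity upgrades this to a classical identity.

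Claim (2) is then a one-line computation using (1) and the fact that $\|\nabla b_v\| \equiv 1$ (standard for Busemann functions on manifolds without conjugate points, since $b_v(\gamma_v(s)) = -s$ along the unique minimizing geodesic $\gamma_v$ forces the gradient to have norm $1$ in the limit):
\begin{align*}
\Delta e^{-h_{vol} b_v} = e^{-h_{vol} b_v}\bigl(h_{vol}^2 \|\nabla b_v\|^2 - h_{vol} \Delta b_v\bigr) = 0.
\end{align*}
For (3), continuous dependence of geodesics on initial data and the uniform 1-Lipschitz property of the approximants $u_t$ give $b_{v_n} \to b_v$ in $C^0_{\mathrm{loc}}(X)$ whenever $v_n \to v$ in $SX$, so the positive harmonic functions $e^{-h_{vol} b_{v_n}}$ converge in $C^0_{\mathrm{loc}}$ to $e^{-h_{vol} b_v}$. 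Interior elliptic estimates for harmonic functions promote this to $C^\infty_{\mathrm{loc}}$ convergence, and since the limit is strictly positive one may take logarithms to obtain continuity of $(v,x) \mapsto \nabla^n b_v(x)$ for every $n \geq 0$.

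The main obstacle is the interchange of limit and Laplacian in (1): $\sigma(r)$ blows up as $r \to 0^+$, so one cannot naively pass $\Delta$ through the defining limit of $b_v$. The resolution is to restrict to a fixed compact $K$ and exploit that $y_t$ eventually exits $K$, confining the Laplacian evaluation to the region where $\sigma$ is bounded and close to $h_{vol}$. Without the harmonic hypothesis the absence of a universal radial formula for $\Delta d$ would obstruct this step entirely, which is why the argument is special to harmonic manifolds.
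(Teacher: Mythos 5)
The paper does not prove this lemma at all---it is quoted from Ranjan--Shah~\cite{rs2003}---so your attempt has to stand on its own. Items (1) and (2) are essentially sound: the identity $\Theta'(r)=\sigma(r)\Theta(r)$, the uniform convergence $\Delta u_t\to h_{vol}$ on compacta, the passage to the limit by elliptic estimates (one could even pass to the limit distributionally and invoke hypoellipticity of $\Delta u = h_{vol}$, avoiding Calder\'on--Zygmund), and the computation in (2) are all correct, granting the standard fact $\norm{\nabla b_v}\equiv 1$ (your one-line justification only gives this along $\gamma_v$; the usual co-ray argument, or Eschenburg's result~\cite{eschen77} that the paper itself invokes, gives it everywhere).

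The genuine gap is in (3), at the very first step: the claim that $v_n\to v$ forces $b_{v_n}\to b_v$ in $C^0_{\mathrm{loc}}$ ``by continuous dependence of geodesics on initial data and the uniform 1-Lipschitz property of the approximants.'' This is precisely the hard content of the lemma and it does not follow softly. Since $b_v=\inf_t b_v^t$ is a \emph{decreasing} limit of the continuous functions $(v,x)\mapsto b_v^t(x)$, soft arguments only give upper semicontinuity, $\limsup_n b_{v_n}(x)\le b_v(x)$; interchanging $t\to\infty$ with $v_n\to v$ requires uniformity in $v$ of the convergence $b_v^t\to b_v$, which is exactly what is at stake (note that the paper's Proposition~\ref{prop:phi_map} takes the continuity of $v\mapsto b_v$ as a \emph{hypothesis}, and obtains the uniformity from it via Dini, not the other way around; in general no-conjugate-point manifolds this continuity can fail). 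The paper's own machinery closes this point differently: $\tr U^s(v)\equiv h_{vol}$ plus monotonicity and Dini give locally uniform convergence $U^s_T\to U^s$, hence continuity of $v\mapsto U^s(v)$ (last lemma of Section 5), and then Eschenburg's theorem (Lemma~\ref{lem:cont}) yields continuity of $(x,v)\mapsto\nabla b_v(x)$. Alternatively, you can repair your argument within your own framework: by Arzel\`a--Ascoli any subsequence of $b_{v_n}$ has a locally uniform limit $f\le b_v$ with $f(p)=0$, $\nabla f(p)=-v$; your item (1) argument shows $\Delta f= h_{vol}$ and elliptic estimates give $\norm{\nabla f}\equiv 1$, so by item (2) the function $e^{-h_{vol}f}-e^{-h_{vol}b_v}$ is a nonnegative harmonic function vanishing at $p$, hence identically zero by the minimum principle when $h_{vol}>0$ (the flat case being explicit), whence $f=b_v$; only then do the interior elliptic estimates upgrade to $C^\infty_{\mathrm{loc}}$ convergence as you intend. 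As written, though, step (3) assumes the key continuity rather than proving it.
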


Using properties of harmonic functions, Ranjan and Shah~\cite[Corollary 5.1]{rs2003} proved that $\partial \hat{X} =\{ b_v-b_v(o) : v \in S_pX\}$ for any $p \in X$ when $X$ is harmonic, simply connected, and without conjugate points. Using the continuity of $v \rightarrow b_v$ we provide a different proof. For $v \in SX$ and $t>0$ let $b_v^t \in C(X)$ be defined by
\begin{align*}
b_v^t(x) = d(x,\gamma_v(t))-t.
\end{align*}

\begin{proposition}
\label{prop:phi_map}
Suppose $X$ is a simply connected manifold without conjugate points. If the map $\Phi:SX \rightarrow C(X)$ given by $\Phi(v)=b_v$ is continuous, then the convergence 
\begin{align*}
\lim_{t \rightarrow \infty} b_v^t(x) = b_v(x)
\end{align*}
is locally uniform in $x \in X$ and $v \in SX$. Moreover, for any $p \in X$ the map $\Phi_p:S_pX \rightarrow \partial \hat{X}$ given by $\Phi_p(v) = b_v-b_v(o)$ is a homeomorphism. 
\end{proposition}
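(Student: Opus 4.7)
The plan is to establish the locally uniform convergence first and then to show that $\Phi_p$ is a continuous bijection from the compact space $S_p X$ to the Hausdorff space $\partial \hat{X}$, which automatically makes it a homeomorphism. For the convergence, the key observation is that $t \mapsto b_v^t(x)$ is monotonically non-increasing: the triangle inequality gives $d(x, \gamma_v(t+s)) \leq d(x, \gamma_v(t)) + s$, hence $b_v^{t+s}(x) \leq b_v^t(x)$. For each fixed $t$ the function $(v, x) \mapsto b_v^t(x)$ is continuous, while the pointwise limit $(v, x) \mapsto b_v(x)$ is continuous by the hypothesis on $\Phi$. Applying Dini's theorem to any sequence $t_n \to \infty$ on compact subsets of $SX \times X$ yields the desired uniform convergence.

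Next I turn to $\Phi_p$. Continuity is immediate from continuity of $\Phi$ and evaluation at $o$. To verify that $\Phi_p(v) \in \partial \hat{X}$, take $y_n = \gamma_v(n) \to \infty$; a direct computation gives $b_{y_n}(x) = d(x, \gamma_v(n)) - d(\gamma_v(n), o) = b_v^n(x) - b_v^n(o)$, which converges in $C(X)$ to $\Phi_p(v)$ by part (1). For surjectivity, write any $\xi \in \partial \hat{X}$ as $\xi = \lim_n b_{y_n}$ with $y_n \to \infty$, let $v_n \in S_p X$ be the initial tangent of the unit-speed geodesic from $p$ to $y_n$, and let $t_n = d(p, y_n) \to \infty$. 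By compactness of $S_p X$, we may assume $v_n \to v$ along a subsequence. The formula $b_{y_n} = b_{v_n}^{t_n} - b_{v_n}^{t_n}(o)$ combined with the joint locally uniform convergence and the continuity of $\Phi$ gives $b_{v_n}^{t_n}(x) \to b_v(x)$ for every $x$, whence $\xi = \Phi_p(v)$.

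The remaining injectivity step is the main obstacle. If $\Phi_p(v) = \Phi_p(w)$, evaluating at $p$ yields $b_v(o) = b_w(o)$ and hence $b_v \equiv b_w$. This forces $b_v(\gamma_w(t)) = -t$, so that $b_v$ descends at the maximal rate $1$ along $\gamma_w$. Since $X$ is simply connected without conjugate points, $\exp_p$ is a diffeomorphism and there is no cut locus, so each approximant $b_v^t$ is smooth with $\nabla b_v^t(p) = -v$ by the first variation formula applied to $d(\cdot, \gamma_v(t))$. One then argues that $b_v$ inherits differentiability at $p$ with $\nabla b_v(p) = -v$, after which the $1$-Lipschitz property combined with the identity $\partial_w b_v(p) = -1$ forces $w = v$. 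Combined with the earlier steps, $\Phi_p$ is a continuous bijection from a compact space to a Hausdorff space, and therefore a homeomorphism.
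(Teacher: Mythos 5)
Your treatment of the locally uniform convergence (monotonicity via the triangle inequality plus Dini), of surjectivity (writing $b_{y_n}=b_{v_n}^{t_n}-b_{v_n}^{t_n}(o)$ and extracting a convergent subsequence of directions), and of the final step (continuous bijection from a compact space to a Hausdorff space) coincides with the paper's argument and is correct. The gap is in injectivity, which you yourself flag as ``the main obstacle.'' After correctly reducing to $b_v\equiv b_w$, you assert that since each approximant $b_v^t$ is smooth with $\nabla b_v^t(p)=-v$, the limit $b_v$ ``inherits'' differentiability at $p$ with $\nabla b_v(p)=-v$. That inference is not valid: locally uniform convergence gives no control on derivatives, and a uniform limit of smooth $1$-Lipschitz functions all having gradient $-v$ at $p$ need not be differentiable at $p$ at all (already on $\RR$ one can smooth $x\mapsto\abs{x}$ near the origin so that every approximant has slope $-1$ at $0$, the approximants are uniformly close to $\abs{x}$, yet the limit is not differentiable there). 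The differentiability of Busemann functions on simply connected manifolds without conjugate points is a genuine theorem; the paper does not derive it but imports it from Eschenburg~\cite{eschen77} (Proposition 1: each $b_v$ is $C^1$ with $\nabla b_v(p)=-v$). As written, your injectivity step is therefore unproven.

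There are two ways to close the gap. Either invoke Eschenburg's regularity result as the paper does, or avoid regularity entirely: if $b_v\equiv b_w$ with $v,w\in S_pX$, then since geodesics in a simply connected manifold without conjugate points are globally minimizing one has $b_v(\gamma_v(-t))=t$ and $b_v(\gamma_w(t))=b_w(\gamma_w(t))=-t$; the $1$-Lipschitz property gives
\begin{align*}
d\big(\gamma_v(-t),\gamma_w(t)\big)\ \geq\ b_v(\gamma_v(-t))-b_v(\gamma_w(t))\ =\ 2t,
\end{align*}
while the triangle inequality through $p$ gives the reverse bound, so the concatenation of $\gamma_v|_{[-t,0]}$ and $\gamma_w|_{[0,t]}$ is a minimizing path, hence a geodesic, hence smooth at $p$, forcing $v=w$. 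Either route completes your proof; without one of them the argument does not go through.
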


\begin{remark} Because of our choice of normalization unless $v\in S_oX$ the function $b_v$ will not be in $\partial \hat{X}$. \end{remark}

\begin{proof}
The triangle inequality shows that $b_v^{T_1}(x) \geq b_v^{T_2}(x)$ for $T_1 < T_2$. As the map $(x,v) \rightarrow b_v(x)$ is continuous, Dini's Theorem then implies that the convergence $b_v^t(x) \rightarrow b_v(x)$ is locally uniform in $v \in SX$ and $x \in X$. 

Now suppose $b_{y_n}$ converges to a Busemann function $\xi \in \partial \hat{X}$, then using the completeness of $X$ there exists $v_n \in S_pX$ and $t_n \in [0,\infty)$ such that $b_{y_n} = b_{v_n}^{t_n}-b_{v_n}^{t_n}(o)$. By passing to a subsequence we can assume $v_n \rightarrow v$ and then using the fact that the convergence $b_w^t(x) \rightarrow b_w(x)$ is uniform in $w \in S_pX$ and locally uniform in $x \in X$  we see that $\xi = b_v-b_v(o)$. This show that $\Phi_p$ is onto. By~\cite[Proposition 1]{eschen77} each $b_v$ is $C^1$ and $\nabla b_v(p)=-v$ so $\Phi_p$ is one-to-one. Finally as $S_pX$ is compact, $\Phi_p:S_pX \rightarrow \partial \hat{X}$ is a homeomorphism.  
\end{proof} 

Motivation for using the word ``harmonic'' in Definition~\ref{defn:harm} can be found in the next lemma.

\begin{lemma}\cite[Theorem 3]{W50}\label{lem:MVT}
Let $X$ be a simply connected, non-compact harmonic manifold. Suppose $h$ is a harmonic function on a open subset $\OO \subset X$. Then for $\epsilon>0$ such that $S_\epsilon(p) \subset \OO$ we have
\begin{align*}
h(p) = \frac{1}{\text{Vol}_X  S_\epsilon(p)}\int_{S_\epsilon(p)} h(y)dy
\end{align*}
where $dy$ is the induced measure on the geodesic sphere $S_\epsilon(p)$.
\end{lemma}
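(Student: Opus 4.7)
The plan is to establish a ``constancy of spherical averages'' identity, then pass to the limit $r \to 0^+$. Define
$$F(r) = \frac{1}{\Vol_X S_r(p)} \int_{S_r(p)} h(y) dy$$
for $r \in (0, \epsilon]$. I would show $F'(r) \equiv 0$ on $(0, \epsilon]$, so that $F$ is constant; continuity of $h$ at $p$ then gives $F(r) \to h(p)$ as $r \to 0^+$, yielding the claim. To compute $F'$, work in geodesic polar coordinates at $p$: let $\theta(r, \omega)$ be the volume density so that $\int_{S_r(p)} f \, dy = \int_{S_p X} f(\exp_p(r\omega)) \theta(r, \omega) d\omega$ for any continuous $f$, and set $\Theta(r) = \int_{S_p X} \theta(r, \omega) d\omega = \Vol_X S_r(p)$.

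The essential input specific to harmonic manifolds is that the logarithmic derivative $\partial_r \log \theta(r, \omega)$ coincides with the mean curvature $(\Delta d(p, \cdot))(\exp_p(r\omega))$ of $S_r(p)$, which by definition of a harmonic manifold is a function $\sigma(r)$ of $r$ alone. Differentiating $I(r) = \int_{S_r(p)} h \, dy$ under the integral sign and using $\partial_r \theta = \sigma(r) \theta$,
$$I'(r) = \int_{S_r(p)} \langle \nabla h, \nu\rangle dy + \sigma(r) I(r),$$
where $\nu$ is the outward unit normal. Since $B_r(p) \subset \OO$ and $\Delta h = 0$ on $\OO$, the divergence theorem makes the first integral vanish, giving $I'(r) = \sigma(r) I(r)$. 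The same identity applied to $h \equiv 1$ gives $\Theta'(r) = \sigma(r) \Theta(r)$, and therefore
$$F'(r) = \frac{I'(r)\Theta(r) - I(r)\Theta'(r)}{\Theta(r)^2} \equiv 0.$$

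Thus $F$ is constant on $(0, \epsilon]$, and uniform continuity of $h$ on a compact neighborhood of $p$ in $\OO$ gives $F(r) \to h(p)$ as $r \to 0^+$, proving the lemma. The one point to verify carefully is the cancellation in $F'$; this hinges precisely on the angular independence of $\sigma(r)$, which is exactly the harmonicity of $X$. Everything else—smoothness of the density $\theta$ on geodesic balls inside the normal neighborhood, legitimacy of differentiating under the integral, and the divergence theorem on $B_r(p)$—is standard since $h$ is smooth on $\OO$ and $\overline{B_\epsilon(p)} \subset \OO$.
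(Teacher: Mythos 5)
The paper does not prove this lemma at all: it is quoted verbatim from Willmore \cite[Theorem 3]{W50}, so there is no in-paper argument to compare against. Your proof is correct and is essentially the classical derivation of the mean value property in harmonic spaces: writing the spherical average $F(r)=I(r)/\Theta(r)$ in polar coordinates, using that $\partial_r\log\theta(r,\omega)=\Delta d(p,\cdot)$ is a function $\sigma(r)$ of $r$ alone (here you should invoke the paper's earlier lemma that for a simply connected non-compact harmonic manifold there are no conjugate points and $\Delta_x d(x,y)$ is globally radial, so $\exp_p$ is a diffeomorphism and $\sigma(r)$ is defined for all $r\in(0,\epsilon]$, not just small $r$), applying the divergence theorem to kill $\int_{S_r(p)}\langle\nabla h,\nu\rangle\,dy$, and concluding $I'=\sigma I$, $\Theta'=\sigma\Theta$, hence $F'\equiv 0$ and $F(r)\to h(p)$. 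One point worth making explicit: your argument needs $h$ harmonic on the whole closed ball $\overline{B_\epsilon(p)}\subset\OO$, whereas the lemma as literally stated only asks $S_\epsilon(p)\subset\OO$; the literal statement is false (e.g.\ in flat $\RR^3$ take $h(x)=|x-a|^{-1}$ with $0<|a|<\epsilon$ and $\OO$ a ball with a small neighborhood of $a$ removed), so your implicit strengthening is the intended reading and is exactly how the lemma is used in the paper's appendix. With that hypothesis made explicit, the proof stands as a self-contained replacement for the citation.
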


\begin{remark}
Lemma~\ref{lem:MVT} holds for general harmonic manifolds when $\epsilon$ is smaller than the injectivity radius at $p$.
\end{remark}

\begin{lemma}
\label{lem:green}
Let $X$ be a simply connected harmonic manifold without conjugate points. Then there is a minimal Green's function $G$ and $G(x,y)$ is a function of only the distance $d(x,y)$.
\end{lemma}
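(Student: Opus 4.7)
The strategy is to exploit the radial structure of the Laplacian on a harmonic manifold: I would explicitly construct a radial candidate by solving an ODE, verify it is a positive Green's function, and then show it is the minimal one.

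First I would compute the radial Laplacian. Because $X$ is harmonic, for fixed $y \in X$ the map $x \mapsto \Delta_x d(x,y)$ depends only on $r=d(x,y)$, and by the first variation of area it equals $\Theta'(r)/\Theta(r)$, where $\Theta(r)=\Vol_X S_r(y)$ is independent of $y$ by Lemma~\ref{lem:volume_ratio}. Hence for any $g\in C^2((0,\infty))$ and $x\neq y$,
$$\Delta_x\, g(d(x,y)) = g''(r) + \frac{\Theta'(r)}{\Theta(r)}\,g'(r).$$
Rewriting the ODE $g''+(\Theta'/\Theta)g'=0$ as $(\Theta g')'=0$ yields $g(r)=A+B\phi(r)$ with $\phi(r)=\int_r^{\infty} ds/\Theta(s)$. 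By Lemma~\ref{lem:volume_ratio}, $\Theta'/\Theta\to h_{vol}$, so when $h_{vol}>0$ we have $\Theta(s)\geq ce^{h_{vol}s/2}$ for large $s$ and $\phi$ is finite; when $h_{vol}=0$, Theorem~\ref{thm:flat} identifies $X$ with $\RR^n$ and the lemma reduces to the classical Newtonian potential (for $n\geq 3$, which is the non-parabolic range).

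Next I would set $G_0(x,y):=B\phi(d(x,y))$ with $A=0$ and choose $B>0$ so that $\Delta_x G_0(\,\cdot\,,y)=-\delta_y$ distributionally. Since $X$ is Euclidean to leading order near $y$, one has $\Theta(r)\sim\omega_{n-1}r^{n-1}$ as $r\to 0^+$, so $\phi$ carries the standard $r^{-(n-2)}$ singularity; matching with the Euclidean kernel pins down $B$. By construction $G_0$ is positive, radial in $d(x,y)$, smooth and harmonic off the diagonal, and has the correct distributional singularity, so it is a positive Green's function on $X$.

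For minimality, the integrability of $1/\Theta$ at infinity makes $X$ non-parabolic, so the minimal positive Green's function $G_{\min}$ exists, and since $G_0$ is a positive Green's function, $G_{\min}\leq G_0$. The difference $u:=G_0-G_{\min}$ extends to a nonnegative harmonic function on $X$ (singularities cancel). In the non-flat case, exponential growth of $\Theta$ forces $G_0(x,y)\to 0$ as $d(x,y)\to\infty$, so $u$ is bounded by $G_0(\,\cdot\,,y)$ and vanishes at infinity; comparing $u$ with $G_{\min}$ on a Dirichlet exhaustion and passing to the limit via the maximum principle yields $u\equiv 0$, hence $G=G_0$ depends only on $d(x,y)$. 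The main obstacle is this last step: verifying that no nonzero positive harmonic function can be added to $G_{\min}$ to give $G_0$, and the key input ruling this out is precisely the exponential decay of $\phi$ at infinity supplied by Lemma~\ref{lem:volume_ratio} in the $h_{vol}>0$ regime (with the flat case handled separately by Theorem~\ref{thm:flat}).
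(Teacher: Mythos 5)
Your proof is correct, but it takes a genuinely different route from the paper. The paper does not construct $G$ at all: it first shows $4\lambda_{min}=h_{vol}^2$ (upper bound from test functions approximating $e^{-sd(o,x)}$, lower bound from a Busemann function with $\norm{\nabla \xi}\equiv 1$, $\Delta\xi\equiv h_{vol}$ and an integration-by-parts argument), so that in the non-flat case $\lambda_{min}>0$; existence of the minimal Green's function then comes from the heat-kernel representation $G(x,y)=\int_0^\infty p_t(x,y)\,dt$ of~\cite[Theorem 13.4]{grig09}, and radiality comes from Szab{\'o}'s theorem~\cite[Theorem 5.1]{szabo90} that $p_t(x,y)$ is radial on a harmonic manifold. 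You instead build the radial candidate explicitly, $G_0(x,y)=\int_{d(x,y)}^{\infty} ds/\Theta(s)$, using the radial form of the Laplacian, the convergence $\Theta'/\Theta\rightarrow h_{vol}>0$ from Lemma~\ref{lem:volume_ratio} for integrability, the short-distance asymptotics of $\Theta$ for the distributional identity $\Delta_x G_0=-\delta_y$, and then prove minimality by hand: $G_{\min}\leq G_0$ via a Dirichlet exhaustion, the difference extends to a nonnegative harmonic function (Weyl's lemma), and since it is dominated by $G_0$, which decays at infinity, the strong maximum principle forces it to vanish. Both treatments defer the $h_{vol}=0$ case to Theorem~\ref{thm:flat}. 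What your route buys is self-containedness (no heat kernel, no Szab{\'o} radiality, no spectral identity) plus an explicit formula $\wt{G}(r)=\int_r^\infty ds/\Theta(s)$, from which, incidentally, Lemma~\ref{lem:green_fcn_exp_decay} would follow immediately since $\wt{G}'=-1/\Theta$; what the paper's route buys is brevity by quoting standard machinery, together with the (well-known but useful) identity $4\lambda_{min}=h_{vol}^2$. Two small points to tidy in your write-up: in dimension two (e.g.\ the hyperbolic plane) the singularity of $\phi$ is logarithmic rather than $r^{-(n-2)}$, though the flux normalization of $B$ and the rest of the argument are unchanged; and in the flat case with $n\leq 2$ the manifold is parabolic, so the statement should be read (as the paper implicitly does) as concerning the non-flat, $h_{vol}>0$ situation in which the lemma is actually applied.
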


\begin{proof}
Let $\lambda_{min}$ be the bottom of the spectrum of the Laplace-Beltrami operator on $(X,g)$. If $\lambda_{min} >0$, then the minimal Green's function exists and is given by
\begin{align*}
G(x,y) = \int_{0}^{\infty} p_t(x,y) dt
\end{align*}
where $p_t(x,y)$ is the heat kernel (see for instance~\cite[Theorem 13.4]{grig09}). Further Szab{\'o}~\cite[Theorem 5.1]{szabo90} has proven that $p_t(x,y)$ is radial when $X$ is harmonic. 

By Theorem~\ref{thm:flat}, it is enough to consider the case in which $h_{vol}>0$. We will now show that $4\lambda_{min}=h_{vol}^2$ (this fact is well known) which will complete the proof.

By approximating $e^{-sd(o,x)}$ by a compactly supported $C^2$ functions when $s>h_{vol}/2$ we see that $4\lambda_{min} \leq h_{vol}^2$. For the other direction we use an argument of Grigor'yan~\cite[Theorem 11.17]{grig09}: let $\xi$ be a Busemann function then $\norm{\nabla \xi} \equiv1$ and $\Delta\xi \equiv h_{vol}$. If $f \in C^{\infty}_K(X)$, using integration by parts and Cauchy's inequality we have
\begin{align*}
h_{vol} \int_X \abs{f}^2 dx = \int_X \Delta \xi \abs{f}^2 = -2\int_X f \left\langle \nabla \xi, \nabla f\right\rangle dx \leq 2\int_X \abs{f} \norm{\nabla f} dx.
\end{align*}
Finally H\"{o}lder's inequality implies that 
\begin{align*}
h_{vol}^2 \leq 4 \frac{ \int_X \norm{\nabla f}^2 dx }{\int_X \abs{f}^2 dx}.
\end{align*}
As $f \in C_0^{\infty}(X)$ was arbitrary we have $h_{vol}^2 \leq 4\lambda_{min}$. 
\end{proof}

\section{Proof of Theorem~\ref{thm:main} and Theorem~\ref{thm:compact}}

\subsection{The Martin boundary and Busemann boundary coincide}

In this subsection we prove the following.

\begin{theorem}
\label{thm:bds}
Suppose $X$ is a non-flat, non-compact, simply connected harmonic manifold. Then the map $id:X \rightarrow X$ extends to a homeomorphism of the Busemann and Martin compactifications. On the boundaries this map is given by
\begin{align*}
\xi \in \partial \hat{X} \rightarrow e^{-h_{vol} \xi} \in \partial_{\Delta} X.
\end{align*}
\end{theorem}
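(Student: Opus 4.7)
The plan is to exploit the radial structure of the Green's function. By Lemma~\ref{lem:green}, $G(x,y)=g(d(x,y))$ for some function $g$ on $(0,\infty)$, and by Theorem~\ref{thm:flat} the non-flatness hypothesis forces $h_{vol}>0$. Writing $\Delta_x G(x,y)=0$ for $x\neq y$ in geodesic polar coordinates and using that $\Theta(r)=\Vol_X S_r(p)$ is independent of $p$, I get the radial ODE $g''(r)+\sigma(r)g'(r)=0$ with $\sigma(r)=\Theta'(r)/\Theta(r)$, hence $g(r)=C\int_r^{\infty}\Theta(s)^{-1}\,ds$. Combining this with Lemma~\ref{lem:volume_ratio} and an L'H\^opital argument, I expect to obtain
\begin{align*}
\lim_{r\to\infty}\frac{g'(r)}{g(r)}=-h_{vol},
\end{align*}
from which I deduce the key consequence: whenever $r_n,s_n\to\infty$ with $r_n-s_n\to a\in\RR$, one has $g(r_n)/g(s_n)\to e^{-h_{vol}a}$, uniformly in $a$ on compact sets.

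Next, for $v\in S_oX$ I identify the Martin limit of $y_n=\gamma_v(n)$. Proposition~\ref{prop:phi_map} gives $d(x,y_n)-n\to b_v(x)$ locally uniformly in $x$, so in particular $d(x,y_n)-d(o,y_n)\to b_v(x)-b_v(o)=\Phi_o(v)(x)$ locally uniformly. Combined with the $g$-asymptotic this yields
\begin{align*}
k_{y_n}(x)=\frac{g(d(x,y_n))}{g(d(o,y_n))}\longrightarrow e^{-h_{vol}\Phi_o(v)(x)}
\end{align*}
locally uniformly, placing $e^{-h_{vol}\Phi_o(v)}$ in $\partial_{\Delta}X$. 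For surjectivity, given any Martin sequence $y_n\to\infty$ I write $y_n=\gamma_{v_n}(t_n)$ with $v_n\in S_oX$ and $t_n\to\infty$, pass to a subsequence along which $v_n\to v$, and invoke Proposition~\ref{prop:phi_map} again (the locally uniform convergence $b_{v_n}^{t_n}\to b_v$) together with the same asymptotic to conclude that $k_{y_n}\to e^{-h_{vol}\Phi_o(v)}$.

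Thus $\Psi:\xi\mapsto e^{-h_{vol}\xi}$ maps $\partial\hat{X}$ onto $\partial_{\Delta}X$; injectivity is immediate since $h_{vol}>0$, and the convergence statements above show that $\Psi$ is continuous. A continuous bijection of compact Hausdorff spaces is a homeomorphism, and extending by the identity on $X$ then gives the asserted homeomorphism of compactifications. The main technical obstacle I anticipate is the asymptotic $g'/g\to -h_{vol}$: Lemma~\ref{lem:volume_ratio} only supplies $\sigma(r)\to h_{vol}$, not a sharp estimate on $\Theta$, so I first need to use $h_{vol}>0$ to show that $\Theta(r)^{-1}$ is integrable at infinity and that $\int_r^{\infty}\Theta(s)^{-1}\,ds$ decays like $e^{-h_{vol}r}$ up to subexponential factors before the L'H\^opital step can be pushed through.
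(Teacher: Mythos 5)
Your proposal is correct in substance, and it reaches the paper's key analytic input --- the asymptotic $\wt{G}^\prime(r)/\wt{G}(r) \rightarrow -h_{vol}$ of Lemma~\ref{lem:green_fcn_exp_decay} --- but by a different and in some ways more elementary route. You integrate the radial ODE explicitly, getting $\Theta(r)\wt{G}^\prime(r)\equiv const$ and hence $\wt{G}(r)=C\int_r^\infty \Theta(s)^{-1}ds$, and then compare with the exponential lower/upper bounds on $\Theta$ coming from $\Theta^\prime/\Theta\rightarrow h_{vol}>0$ (Lemma~\ref{lem:volume_ratio}, Theorem~\ref{thm:flat}); this simultaneously disposes of the paper's auxiliary lemma that $\wt{G}^\prime(r)\rightarrow 0$ and handles the integrability issue you flag, which is indeed routine. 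One point you gloss: your formula for $\wt{G}$ silently sets the constant of integration to zero, i.e.\ it uses $\wt{G}(r)\rightarrow 0$ as $r\rightarrow\infty$; this is true but needs a word --- either cite the heat-kernel representation as the paper does (Grigor'yan), or note that if $\wt{G}(r)\rightarrow L>0$ then $G-L$ would contradict minimality of $G$. The second half of your argument also diverges from the paper: you work directly with the ratios $k_{y_n}(x)=\wt{G}(d(x,y_n))/\wt{G}(d(o,y_n))$ and the locally uniform convergence $b_{v_n}^{t_n}\rightarrow b_v$ from Proposition~\ref{prop:phi_map}, whereas the paper converts everything to gradient convergence ($\nabla k_{x_n}$ via Lemma~\ref{lem:harm_conv}, $\nabla b_{x_n}$ via the Hessian bound and Lemma~\ref{lem:conv}, which requires the bounded-curvature input from Besse). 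Your route avoids that elliptic/Hessian machinery entirely; what the paper's route buys is a clean two-way statement ``$b_{x_n}\rightarrow\xi$ locally uniformly iff $k_{x_n}\rightarrow e^{-h_{vol}\xi}$ locally uniformly'' for arbitrary interior sequences, which is exactly what is needed to say the identity extends to a homeomorphism of the compactifications. Your closing step ``a continuous bijection of the boundaries plus the identity on $X$ gives the homeomorphism'' is slightly too quick as stated: continuity of the extension at a boundary point is precisely the interior-sequence implication above. Your surjectivity computation (write $x_n=\exp_o(t_nv_n)$, extract $v_n\rightarrow v$, use the uniform convergence and the ratio asymptotics) in fact proves both implications, so the gap is one of packaging rather than substance; you should state the sequence correspondence explicitly rather than appeal only to the boundary map $\Psi$.
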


Let $X$ be as in Theorem~\ref{thm:bds}. Then by Lemma~\ref{lem:green} the Green's function $G(x,y)$ depends only on $d(x,y)$. Let $\wt{G}:\RR_{>0} \rightarrow \RR_{>0}$ be the function such that
\begin{align*}
G(x,y)=\wt{G}(d(x,y)).
\end{align*}

\begin{lemma}
With the notation above,
\begin{align*}
\lim_{r\rightarrow \infty} \wt{G}^\prime(r)=0.
\end{align*}
\end{lemma}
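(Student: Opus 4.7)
The plan is to exploit the fact that $G(\cdot,y)$ is a radial harmonic function on $X \setminus \{y\}$, derive a first-order ODE for $\tilde G'(r)$, and then use non-flatness to push $\tilde G'(r)$ to $0$ via the exponential growth of $\Theta(r)$.

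First I would record the Laplacian of a radial function. Because $X$ is harmonic, the mean curvature of the geodesic sphere $S_r(y)$ depends only on $r$, and in fact it equals $\sigma(r) := \Theta'(r)/\Theta(r)$ (this is the standard first-variation identity, using that $\Theta(r)$ is independent of the center $y$ by Lemma~\ref{lem:volume_ratio}). Consequently, for any smooth radial function $f(x) = \tilde f(d(x,y))$ and any $x \neq y$,
\begin{align*}
\Delta f(x) = \tilde f''(r) + \sigma(r)\,\tilde f'(r), \qquad r = d(x,y).
\end{align*}

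Next I would apply this to $\tilde G$. Since $\Delta_x G(x,y) = 0$ for $x \neq y$, the function $\tilde G$ satisfies $\tilde G'' + \sigma \tilde G' = 0$ on $(0,\infty)$, which can be rewritten as $\frac{d}{dr}\bigl(\Theta(r)\tilde G'(r)\bigr) = 0$. Hence $\Theta(r)\,\tilde G'(r)$ is constant. To identify the constant, I would integrate $\Delta_x G(x,y) = -\delta_y$ over the ball $B_r(y)$ and apply the divergence theorem: since $\nabla_x G(x,y) = \tilde G'(r) \hat n$ for $\hat n$ the outward radial unit field on $S_r(y)$,
\begin{align*}
-1 = \int_{B_r(y)} \Delta_x G(x,y)\, dx = \int_{S_r(y)} \tilde G'(r)\, dS = \tilde G'(r)\,\Theta(r).
\end{align*}
Therefore $\tilde G'(r) = -1/\Theta(r)$ for all $r > 0$.

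Finally, I would invoke non-flatness. By Theorem~\ref{thm:flat}, $h_{vol} > 0$, and by Lemma~\ref{lem:volume_ratio} we have $\Theta'(r)/\Theta(r) \to h_{vol}$. Fixing $0 < \epsilon < h_{vol}$ and integrating the inequality $\Theta'/\Theta \geq h_{vol} - \epsilon$ valid for $r$ large, one obtains $\Theta(r) \geq c\,e^{(h_{vol}-\epsilon)r}$, so $\Theta(r) \to \infty$. Hence $\tilde G'(r) = -1/\Theta(r) \to 0$, as desired.

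The only genuinely substantive step is the radial Laplacian formula and the attendant identification of the mean curvature with $\Theta'/\Theta$; everything else is routine integration. This identification, however, is essentially built into the harmonicity hypothesis via Lemma~\ref{lem:volume_ratio}, so I do not expect any real obstacle.
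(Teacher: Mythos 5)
Your proof is correct, but it takes a genuinely different route from the paper. The paper argues by contradiction through the Martin kernels: it picks $x_n$ with $d(o,x_n)=r_n\rightarrow\infty$, extracts a locally uniformly convergent subsequence $k_{x_n}\rightarrow h$, upgrades this to convergence of gradients (Lemma~\ref{lem:harm_conv}), and then uses $\norm{\nabla k_{x_n}(o)}=\abs{\wt{G}^\prime(r_n)/\wt{G}(r_n)}$ together with $\wt{G}(r_n)\rightarrow 0$ to force $\wt{G}^\prime(r_n)\rightarrow 0$. You instead use the radial form of the Laplacian, $0=\wt{G}^{\prime\prime}+(\Theta^\prime/\Theta)\wt{G}^\prime$ --- exactly the identity (Szab\'o's equation (1.3)) that the paper only deploys in the \emph{next} lemma --- to conclude that $\Theta(r)\wt{G}^\prime(r)$ is constant, and then finish with $\Theta(r)\rightarrow\infty$, which follows from non-flatness via Theorem~\ref{thm:flat} and Lemma~\ref{lem:volume_ratio}. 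Your approach is more elementary and more quantitative: the explicit formula $\wt{G}^\prime(r)=-1/\Theta(r)$ gives the exact decay rate and would in fact also streamline the proof of Lemma~\ref{lem:green_fcn_exp_decay}, whereas the paper's compactness argument avoids any flux computation and fits its general scheme of reading off information from convergence of Martin kernels. One small caveat: applying the divergence theorem directly to $\Delta_x G(x,y)=-\delta_y$ over $B_r(y)$ glosses over the singularity at $y$; the clean justification is Green's identity on an annulus $B_r(y)\setminus B_\epsilon(y)$ (which re-proves constancy of $\Theta\wt{G}^\prime$) plus the pole asymptotics of $G$ to identify the constant as $-1$. But note that the precise value of the constant is immaterial for this lemma: constancy of $\Theta\wt{G}^\prime$ already follows from the ODE, and $\Theta(r)\rightarrow\infty$ alone yields $\wt{G}^\prime(r)\rightarrow 0$.
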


\begin{proof}
Suppose for a contradiction that there exists $r_n \rightarrow \infty$ and $\epsilon>0$ such that $\abs{\wt{G}^\prime(r_n)}>\epsilon$ for all $n$. Pick $x_n \in X$ such that $r_n = d(o,x_n)$ and consider the sequence $k_n:=k_{x_n}$. By passing to a subsequence we may suppose $k_{n} \rightarrow h \in \partial_{\Delta} X$ locally uniformly. By Lemma~\ref{lem:harm_conv}, this implies that $\nabla k_n \rightarrow \nabla h$ locally uniformly. But
\begin{align*}
\nabla k_n(x) = \frac{\nabla_x G(x,x_n)}{G(o,x_n)} \text{ and so } \nabla k_n(o) = \frac{\wt{G}^\prime(r_n)}{\wt{G}(r_n)} (\nabla_x d(x,x_n))|_{x=o}.
\end{align*}
Now $\norm{\nabla k_n(o)} = \abs{\frac{\wt{G}^\prime(r_n)}{\wt{G}(r_n)}}$ converges to $\norm{\nabla h(o)}$ and $\wt{G}(r_n)$ converges to zero (see for instance the proof of Theorem 13.4 in~\cite{grig09}), so we must have that $\wt{G}^\prime(r_n)$ converges to zero. Which is a contradiction.
\end{proof}

\begin{lemma}
\label{lem:green_fcn_exp_decay}
With the notation above,
\begin{align*}
\lim_{r\rightarrow \infty} \frac{\wt{G}^\prime(r)}{\wt{G}(r)} = -h_{vol}
\end{align*}
\end{lemma}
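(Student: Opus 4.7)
The key is to derive an ODE for $\wt{G}$ from the harmonicity of $X$ and then apply L'H\^opital's rule. On a harmonic manifold the volume density of the exponential map based at any point is radial, and this makes the Laplacian of any radial function easy to compute.

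First I would establish the radial Laplacian formula on $X$. By the divergence theorem $\int_0^r (\Delta s)\,\Theta(s)\,ds = \Theta(r)$, so on a harmonic manifold (where $\Delta r$ depends only on $r$) we obtain $\Delta r = \Theta'(r)/\Theta(r)$, and hence for any $C^2$ function $u$ of $r = d(\cdot,y)$,
\begin{align*}
\Delta u(r) = u''(r) + \frac{\Theta'(r)}{\Theta(r)}\, u'(r).
\end{align*}
Applying this to $G(x,y) = \wt{G}(d(x,y))$ and using that $G(\cdot,y)$ is harmonic off the diagonal yields the ODE
\begin{align*}
\wt{G}''(r) + \frac{\Theta'(r)}{\Theta(r)}\, \wt{G}'(r) = 0, \qquad r > 0.
\end{align*}

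With this ODE in hand the limit follows from L'H\^opital's rule. The minimal Green's function is positive and tends to $0$ at infinity (indeed from the proof of Lemma~\ref{lem:green} one has $4\lambda_{min} = h_{vol}^2 > 0$, since $X$ is non-flat by Theorem~\ref{thm:flat}), giving $\wt{G}(r) \to 0$. The previous lemma gives $\wt{G}'(r) \to 0$. Also $\wt{G}'$ never vanishes, because a zero of $\wt{G}'$ combined with the ODE would produce matching initial data with the constant solution, forcing $\wt{G}$ to be constant on $(0,\infty)$ and contradicting $\wt{G}(r) \to 0$. All hypotheses of L'H\^opital thus being verified, the ODE together with Lemma~\ref{lem:volume_ratio} give
\begin{align*}
\lim_{r \to \infty} \frac{\wt{G}'(r)}{\wt{G}(r)} = \lim_{r \to \infty} \frac{\wt{G}''(r)}{\wt{G}'(r)} = -\lim_{r \to \infty} \frac{\Theta'(r)}{\Theta(r)} = -h_{vol}.
\end{align*}

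The only substantive computation is the radial Laplacian identity, which is standard for harmonic manifolds since their volume density is radial. The remaining steps are routine and I foresee no serious obstacle; the main bookkeeping consists of checking that $\wt{G}$ and $\wt{G}'$ both tend to $0$ and that $\wt{G}'$ does not vanish, all of which are standard consequences of the structure of the minimal Green's function together with the previous lemma.
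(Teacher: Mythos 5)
Your proposal is correct and follows essentially the same route as the paper: both derive the radial ODE $\wt{G}''+(\Theta'/\Theta)\wt{G}'=0$ from harmonicity of the Green's function and then pass to the limit using $\wt{G}(r)\to 0$, $\wt{G}'(r)\to 0$, and $\Theta'(r)/\Theta(r)\to h_{vol}$. The only difference is packaging: where you invoke L'H\^opital (after checking that $\wt{G}'$ never vanishes), the paper carries out the same comparison by hand via $\sup/\inf$ bounds of $\Theta'/\Theta$ on $[r,R]$ applied to the ratio $\bigl(\wt{G}'(R)-\wt{G}'(r)\bigr)/\bigl(\wt{G}(R)-\wt{G}(r)\bigr)$ and then lets $R\to\infty$.
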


\begin{proof}
As $G(x,y)=\wt{G}(d(x,y))$ is a harmonic function in $x$ on $X\setminus\{y\}$ and depends only on $d(x,y)$, equation (1.3) in~\cite{szabo90} implies that
\begin{align*}
0 = -\Delta_x G(x,y) = \wt{G}^{\prime\prime}(r) + \frac{ \Theta^\prime(r)}{\Theta(r)}\wt{G}^\prime(r) \text{ for $r=d(x,y)>0$}.
\end{align*}
For $0 < r < R$ we then have:
\begin{align*}
-\sup\{ \Theta^\prime(s)/\Theta(s) : s \in [r,R]\} < \frac{ \int_{r}^R \wt{G}^{\prime\prime}(s)ds}{\int_{r}^R \wt{G}^{\prime}(s)ds} < -\inf\{ \Theta^\prime(s)/\Theta(s) : s \in [r,R]\}
\end{align*}
which becomes 
\begin{align*}
-\sup\{ \Theta^\prime(s)/\Theta(s) : s \in [r,R]\} < \frac{ \wt{G}^{\prime}(R)-\wt{G}^{\prime}(r)}{\wt{G}(R)-\wt{G}(r)} < -\inf\{ \Theta^\prime(s)/\Theta(s) : s \in [r,R]\}
\end{align*}
sending $R \rightarrow \infty$ yields
\begin{align*}
-\sup\{\Theta^\prime(s)/\Theta(s) : s \in [r,-\infty)\} < \frac{ \wt{G}^{\prime}(r)}{\wt{G}(r)} < -\inf\{ \Theta^\prime(s)/\Theta(s) : s \in [r,\infty)\}.
\end{align*}
The lemma now follows from the fact that $\Theta^\prime(s)/\Theta(s) \rightarrow h_{vol}$ as $s \rightarrow \infty$.
\end{proof}

\begin{proof}[Proof of Theorem~\ref{thm:bds}]
It is enough to show that the sequence
\begin{align*}
b_{x_n}(x) = d(x,x_n)-d(x_n,x)
\end{align*}
converges locally uniformly to $\xi$ if and only if 
\begin{align*}
k_{x_n}(x) = \frac{G(x,x_n)}{G(o,x_n)}
\end{align*}
converges locally uniformly to $e^{-h_{vol}\xi}$. 

By Lemma~\ref{lem:harm_conv}, $k_{x_n}$ converges locally uniformly if and only if $\nabla k_{x_n}$ converges locally uniformly. Moreover
\begin{align}
\label{eq:1234}
\nabla k_{x_n}(x) = \frac{\wt{G}^\prime(d(x,x_n))}{\wt{G}(d(x,x_n))} k_{x_n}(x) \nabla d(x,x_n).
\end{align}
Lemma~\ref{lem:green_fcn_exp_decay} implies that
\begin{align*}
\frac{\wt{G}^\prime(d(x,x_n))}{\wt{G}(d(x,x_n))}
\end{align*}
converges locally uniformly to $h_{vol}$ if $x_n \rightarrow \infty$. So $k_{x_n}(x)$ converges locally uniformly if and only if $\nabla d(x,x_n)=\nabla b_{x_n}$ converges locally uniformly. But by~\cite[Chapter 6]{besse78}, $X$ has bounded sectional curvature and so by Lemma~\ref{lem:conv} this latter condition is equivalent to $b_{x_n}$ converging locally uniformly.
\end{proof}

\subsection{Each element of the Martin boundary is minimal}

In this subsection we will prove the following.

\begin{proposition}
\label{prop:minimal}
Suppose $X$ is a non-flat, non-compact, simply connected harmonic manifold. Then each $h \in \partial_\Delta X$ is minimal.
\end{proposition}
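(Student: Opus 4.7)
The plan is to combine the Choquet representation recalled in Section 2 with the explicit parametrization of $\partial_\Delta X$ coming from Theorem~\ref{thm:bds} and Proposition~\ref{prop:phi_map}. Fix $h \in \partial_\Delta X$; by Theorem~\ref{thm:bds} and Proposition~\ref{prop:phi_map} we can write $h = e^{-h_{vol}\xi_0}$ where $\xi_0 = b_{v_0} - b_{v_0}(o)$ for a unique $v_0 \in S_o X$. Choquet theory gives a unique Borel probability measure $\mu$ on $\mathcal{K}_o^* \subset \partial_\Delta X$ with $h = \int h' \, d\mu(h')$, and minimality of $h$ is equivalent to $\mu = \delta_h$. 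Pushing $\mu$ through the homeomorphisms of Theorem~\ref{thm:bds} and Proposition~\ref{prop:phi_map}, I obtain a Borel probability measure $\sigma$ on $S_o X$ satisfying
\[ e^{-h_{vol}(b_{v_0}(x) - b_{v_0}(o))} = \int_{S_o X} e^{-h_{vol}(b_v(x) - b_v(o))} \, d\sigma(v) \qquad \text{for all } x \in X. \]

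The next step is to differentiate this identity in $x$ at $x = o$. Lemma~\ref{lem:harm_basic}(3) says $(v,x) \mapsto \nabla b_v(x)$ is continuous, so on any compact neighborhood of $o$ the $x$-gradient of the integrand is bounded uniformly in $v \in S_o X$, and dominated convergence justifies differentiating under the integral. Using $\nabla b_v(o) = -v$ (as cited in the proof of Proposition~\ref{prop:phi_map}) and the fact that non-flatness combined with Theorem~\ref{thm:flat} gives $h_{vol} > 0$, one arrives at the barycenter identity
\[ v_0 = \int_{S_o X} v \, d\sigma(v) \]
in the Hilbert space $T_o X$.

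The proof then concludes by strict convexity of the unit ball in $T_o X$: any probability measure on $S_o X$ whose barycenter is a unit vector must be a Dirac mass, so $\sigma = \delta_{v_0}$ and hence $\mu = \delta_h$. The only genuinely technical step is the exchange of differentiation and integration, which is routine given the uniform continuity from Lemma~\ref{lem:harm_basic} and the compactness of $S_o X$; the conceptual content is that $\partial_\Delta X$ is tightly parametrized by the unit sphere $S_o X$ in such a way that the Choquet representing measure of any boundary point is forced, through first-order information at $o$, to be concentrated at a single vector.
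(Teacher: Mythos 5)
Your proof is correct, and it reaches the conclusion by a genuinely different key step than the paper's. Both arguments start identically: identify $\partial_\Delta X$ with $S_oX$ via $v \mapsto e^{-h_{vol} b_v}$ (Theorem~\ref{thm:bds} and Proposition~\ref{prop:phi_map}), transport the Choquet representing measure of $h$ to a probability measure $\sigma$ on $S_oX$, and show $\sigma$ is a Dirac mass. The paper does this with a zeroth-order estimate far out along the geodesic: it proves Lemma~\ref{lem:buse_growth}, namely $b_v(\gamma_w(t)) \geq -t$ with equality only when $v = w$, evaluates the representation at $\gamma_w(t)$, and forces equality in the resulting integral inequality, so $\sigma$ can only charge $w$. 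You instead use first-order information at the basepoint: differentiate the representation at $o$ (justified by the continuity of $(v,x)\mapsto \nabla b_v(x)$ from Lemma~\ref{lem:harm_basic}, the bound $\norm{\nabla b_v}\equiv 1$, and compactness of $S_oX$), use $\nabla b_{v}(o) = -v$ and $h_{vol}>0$ (non-flatness plus Theorem~\ref{thm:flat}) to obtain the barycenter identity $v_0 = \int_{S_oX} v\, d\sigma(v)$ in $T_oX$, and conclude by strict convexity of the Euclidean unit ball. The two routes ultimately rest on the same geometric fact, since the equality case of Lemma~\ref{lem:buse_growth} is itself proved from $\nabla b_v(o)=-v$ and $\norm{\nabla b_v}\equiv 1$; the paper's version buys the avoidance of any differentiation under the integral sign, while yours buys a transparent linear-algebraic finish and makes clear that the sphere parametrization pins down the representing measure from first-order data at a single point. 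One bookkeeping point worth stating explicitly in your write-up: for $v \in S_oX$ one has $b_v(o)=0$ (geodesics from $o$ are rays), so the normalizations $b_v(o)$ in your displayed identity all vanish and evaluating at $x=o$ shows $\sigma$ has total mass $1$.
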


\begin{lemma}
\label{lem:buse_growth}
With $X$ as in Proposition~\ref{prop:minimal}, for all $v,w \in S_oX$ and $t \geq 0$
\begin{align*}
b_v(\gamma_w(t)) \geq -t
\end{align*}
with equality if and only if $v= w$. 
\end{lemma}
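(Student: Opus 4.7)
The plan is to reduce the inequality to one-variable calculus by composing $b_v$ with the geodesic $\gamma_w$. The three ingredients I need, all available from the preceding results, are: (i) $b_v$ is smooth (Lemma~\ref{lem:harm_basic}), (ii) $b_v$ is 1-Lipschitz, since it is a locally uniform limit (by Proposition~\ref{prop:phi_map}) of the 1-Lipschitz functions $x \mapsto d(x,\gamma_v(s))-s$, and (iii) $\nabla b_v(o) = -v$ whenever $v \in S_oX$, as cited from Eschenburg in the proof of Proposition~\ref{prop:phi_map}.

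First I would set $f(t) = b_v(\gamma_w(t))$. The normalization gives $f(0) = b_v(o) = 0$, and the chain rule gives $f'(t) = \langle \nabla b_v(\gamma_w(t)), \gamma_w'(t)\rangle$. Since $\|\nabla b_v\| \leq 1$ (from the Lipschitz bound) and $\gamma_w$ is unit-speed, Cauchy--Schwarz yields $|f'(t)| \leq 1$; in particular $f'(t) \geq -1$. Integrating from $0$ to $t$ gives $f(t) \geq -t$, which is the claimed inequality.

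For the equality clause, the direction $v = w$ is immediate: then $\gamma_w(t) = \gamma_v(t)$ and the definition of $b_v$ together with $d(\gamma_v(t),\gamma_v(s)) = s - t$ for $s \geq t$ gives $b_v(\gamma_v(t)) = -t$. Conversely, suppose $b_v(\gamma_w(t_0)) = -t_0$ for some $t_0 > 0$. Then $\int_0^{t_0}(f'(s)+1)\,ds = 0$ with nonnegative continuous integrand, forcing $f'(s) \equiv -1$ on $[0,t_0]$. Evaluating at $s = 0$ yields $\langle \nabla b_v(o), w\rangle = \langle -v, w\rangle = -1$, i.e., $\langle v, w\rangle = 1$. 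Since $v,w \in S_oX$, the Cauchy--Schwarz equality case forces $v = w$.

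The statement is essentially a rigidity assertion: only the geodesic $\gamma_v$ itself realizes the maximal decay rate $-1$ of $b_v$ starting from $o$. There is no serious obstacle once the smoothness and gradient identification at $o$ are in hand; the whole argument is a direct application of Cauchy--Schwarz and its equality case, and the non-flatness hypothesis plays no role here beyond ensuring we are in the setting of the previous lemmas.
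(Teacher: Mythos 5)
Your proof is correct and follows essentially the same route as the paper: write $b_v(\gamma_w(t))$ as the integral of $\frac{d}{ds}b_v(\gamma_w(s)) = \langle \nabla b_v(\gamma_w(s)), \dot\gamma_w(s)\rangle$, bound the integrand below by $-1$ via Cauchy--Schwarz, and use $\nabla b_v(o)=-v$ at $s=0$ to settle the equality case. Your treatment of the equality case is in fact more explicit than the paper's one-line assertion, but the underlying argument is the same.
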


\begin{proof}
Notice that $b_v$ and $b_w$ are smooth and $\nabla b_v(o) = -v$, $\nabla b_w(o) = -w$, $\norm{\nabla b_v } \equiv 1$, and  $\norm{\nabla b_w} \equiv 1$. Then
\begin{align*}
b_v(\gamma_w(t)) 
& = \int_0^t \frac{d}{ds} b_v(\gamma_w(s)) ds = \int_0^t g\big(\nabla b_v(\gamma_w(s)), \dot{\gamma}_w(s)\big) ds\\
& = \int_0^t -g\big(\nabla b_v(\gamma_w(s)), \nabla b_w(\gamma_w(s))\big) ds \geq \int_0^t -1 ds = -t.
\end{align*}
with equality if and only if $v=w$.
\end{proof}

\begin{proof}[Proof of Proposition~\ref{prop:minimal}]
Using Theorem~\ref{thm:bds} and Proposition~\ref{prop:phi_map}, we have a homeomorphism $S_oX \rightarrow \partial_{\Delta} X$ given by $v \rightarrow e^{-h_{vol} b_v}$. Using this identification it is enough to prove the following: if $w \in S_oX$ and there exists a probability measure $\nu$ on $S_oX$ such that
\begin{align*}
e^{-h_{vol} b_w} = \int_{S_oX} e^{-h_{vol} b_v} d\nu(v)
\end{align*}
then $\nu = \delta_w$. To see this, evaluate the above expression at $\gamma_w(t)$ and use Lemma~\ref{lem:buse_growth} to obtain
\begin{align*}
e^{h_{vol}t} = \int_{S_oX} e^{-h_{vol} b_v(\gamma_w(t))} d \nu(v)  \leq \int_{S_oX} e^{h_{vol} t} d \nu(v)= e^{h_{vol}t}.
\end{align*}
Thus we must have $\nu = \delta_w$ and hence $e^{-h_{vol} b_w}$ is a minimal element of the Martin boundary.
\end{proof}

\subsection{The harmonic measures have full support}

For a complete simply connected Riemannian manifold $(X,g)$ without conjugate points and a point $p \in X$ we have normal coordinates on $X$ coming from the diffeomorphism
\begin{align*}
\exp_p : T_p X \rightarrow X.
\end{align*}
Now consider the group $O(g_p) \subset GL(T_pM)$ of linear maps preserving the inner product $g_p$ on $T_pM$. Then $O(g_p)$ acts on $X$ by
\begin{align}
\label{eq:action_one}
T \cdot \exp_p(v)=\exp_p(Tv)
\end{align}
Notice that if in addition $X$ is a harmonic manifold then the Riemannian volume form $dx$ is invariant under this $O(g_p)$ action.

\begin{lemma}
\label{lem:polar_coor_ext}
Suppose $X$ is a complete simply connected Riemannian manifold without conjugate points. If the map $\Phi:SX \rightarrow C(X)$ given by $\Phi(v) = b_v$ is continuous, then the action of $O(g_p)$ on $X$ extends to an action of $O(g_p)$ on $\hat{X}$ by homeomorphisms such that
\begin{align}
\label{eq:action_two}
T \cdot (b_v-b_v(o)) = b_{Tv}-b_{Tv}(o)
\end{align}
for $T \in O(g_p)$ and $v \in S_pX$.
\end{lemma}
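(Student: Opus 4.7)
The plan is to transport the $O(g_p)$-action from $S_pX$ to $\partial \hat{X}$ via the homeomorphism $\Phi_p$ of Proposition~\ref{prop:phi_map}, and then verify continuity across the boundary using the joint local uniform convergence $b_v^t \to b_v$.

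First, I would observe that because $X$ is simply connected without conjugate points, $\exp_p : T_pX \to X$ is a diffeomorphism, so every $y \in X$ is uniquely of the form $\gamma_u(r)$ for some $u \in S_pX$ and $r = d(p,y) \geq 0$; under this identification the action (\ref{eq:action_one}) reads $T \cdot \gamma_u(r) = \gamma_{Tu}(r)$, and a direct computation gives
\[
b_y(x) = d(x,\gamma_u(r)) - d(\gamma_u(r),o) = b_u^r(x) - b_u^r(o).
\]
Accordingly I define $\widehat{T}:\hat{X} \to \hat{X}$ by $\widehat{T}|_X := T$ and on the boundary by $\widehat{T}(b_v - b_v(o)) := b_{Tv} - b_{Tv}(o)$ for $v \in S_pX$. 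Since $\Phi_p$ is a homeomorphism and the usual $O(g_p)$-action on $S_pX$ is continuous, the restriction of $\widehat{T}$ to $\partial \hat{X}$ is automatically a homeomorphism.

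Next I would verify continuity of $\widehat{T}$ at a boundary point. Suppose $y_n \in X$ and $b_{y_n} \to \xi \in \partial \hat{X}$ in $C(X)$. Then $r_n := d(p,y_n) \to \infty$, for otherwise some subsequence of $y_n$ would converge in $X$, forcing $\xi$ to lie in the image of $X$. Writing $y_n = \gamma_{u_n}(r_n)$ with $u_n \in S_pX$, compactness of $S_pX$ combined with the injectivity of $\Phi_p$ shows that $u_n \to u$ for a unique $u \in S_pX$: along any convergent subsequence $u_{n_k} \to u'$ the joint local uniform convergence of Proposition~\ref{prop:phi_map} gives
\[
b_{y_{n_k}} = b_{u_{n_k}}^{r_{n_k}} - b_{u_{n_k}}^{r_{n_k}}(o) \longrightarrow b_{u'} - b_{u'}(o),
\]
so $\xi = b_{u'} - b_{u'}(o)$ and hence $u' = \Phi_p^{-1}(\xi)$. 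Applying $T$ we obtain $T y_n = \gamma_{Tu_n}(r_n)$ with $Tu_n \to Tu$ and $r_n \to \infty$, and the same argument yields $b_{Ty_n} \to b_{Tu} - b_{Tu}(o) = \widehat{T}(\xi)$. This is continuity of $\widehat{T}$ on all of $\hat{X}$. Running the identical argument for $T^{-1}$ shows $\widehat{T}$ is a homeomorphism, and the group law $\widehat{T_1 T_2} = \widehat{T_1} \widehat{T_2}$ is immediate from $(T_1 T_2)v = T_1(T_2 v)$ applied on both $X$ and $S_pX$.

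The main technical point is exactly the continuity at the boundary, which hinges on the joint local uniform convergence $b_v^t \to b_v$ in both $v \in SX$ and $t \to \infty$ supplied by Proposition~\ref{prop:phi_map}; without this joint uniformity one could not pass to the limit through $r_n \to \infty$ and $u_n \to u$ simultaneously. Everything else amounts to bookkeeping about the $\exp_p$-identification.
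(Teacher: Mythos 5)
Your proposal is correct and follows essentially the same route as the paper: define the extension via equations (\ref{eq:action_one}) and (\ref{eq:action_two}) and deduce continuity at boundary points from the uniform-in-$v\in S_pX$, locally-uniform-in-$x$ convergence $b_v^t \rightarrow b_v$ supplied by Proposition~\ref{prop:phi_map}. You merely spell out the sequential compactness/injectivity bookkeeping that the paper leaves implicit.
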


Recall that Proposition~\ref{prop:phi_map} implies that $\partial \hat{X} = \{ b_v -b_v(o): v \in S_p X\}$ and that the convergence $b_v^t(x) \rightarrow b_v(x)$ is locally uniform in $x \in X$ and $v \in SX$.

\begin{proof}
First define an action of $O(g_p)$ on $\hat{X}$ using equations~\ref{eq:action_one} and~\ref{eq:action_two}. Now fix $T \in O(g_p)$. The fact that the convergence $b_v^t(x) \rightarrow b_v(x)$ is uniform in $v \in S_pX$ and locally uniform in $x \in X$ implies that the map $f_T:\hat{X} \rightarrow \hat{X}$ given by $f_T(\xi) = T \cdot \xi$ is continuous.
\end{proof}

\begin{theorem}
\label{thm:full_support}
Let $X$ be a simply connected non-flat harmonic manifold without conjugate points. If $\{ \mu_x : x\in X\}$ is the harmonic measure on $\partial \hat{X}$ then for each $p \in X$ the measure $\mu_p$ is invariant under the $O(g_p)$ action described above. In particular, $\mu_p$ has full support and $\mu_p = (\Phi_p)_*\lambda_p$ where $\lambda_p$ is the Lebesque measure on $S_pX$.
\end{theorem}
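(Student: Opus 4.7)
My plan is to prove the stronger assertion $\mu_p = (\Phi_p)_*\lambda_p$; the $O(g_p)$-invariance of $\mu_p$ then follows immediately, since $\lambda_p$ is the unique $O(g_p)$-invariant probability measure on $S_pX$ and $\Phi_p$ intertwines the two $O(g_p)$-actions by Lemma~\ref{lem:polar_coor_ext}. To identify $\mu_p$ with $(\Phi_p)_*\lambda_p$, I will show that both of these probability measures, when pulled back to $S_pX$ via $\Phi_p^{-1}$, represent the constant function $1$ as a superposition of the minimal positive harmonic functions $\{e^{-h_{vol} b_v} : v \in S_pX\}$ (each taking the value $1$ at $p$, since $b_v(p)=0$). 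By Propositions~\ref{prop:minimal} and~\ref{prop:phi_map} this family parametrizes the entire minimal Martin boundary, so Choquet uniqueness forces the two measures on $S_pX$ to coincide.

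That $\tilde\mu_p := (\Phi_p^{-1})_*\mu_p$ has the representing property is a direct unwinding of the definitions. For $\xi = \Phi_p(v)$ the normalization $b_v(p)=0$ gives $b_v = \xi - \xi(p)$, and combining this with $d\mu_p/d\mu_o = e^{-h_{vol}\xi(p)}$ and the defining property of $\mu_o$ yields
\[
\int_{S_pX} e^{-h_{vol} b_v(x)}\,d\tilde\mu_p(v) = \int_{\partial\hat{X}} e^{-h_{vol}(\xi(x)-\xi(p))}\,e^{-h_{vol}\xi(p)}\,d\mu_o(\xi) = \int_{\partial\hat{X}} e^{-h_{vol}\xi(x)}\,d\mu_o(\xi) = 1.
\]
The substantive content of the theorem is therefore the analogous identity for $\lambda_p$:
\[
F_p(x) := \int_{S_pX} e^{-h_{vol} b_v(x)}\,d\lambda_p(v) = 1 \qquad \text{for every } x \in X.
\]

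To prove $F_p \equiv 1$ I combine Martin kernel convergence with a spherical average of the Green's function. For $v \in S_pX$ the identity $d(p,\gamma_v(t))=t$ gives $G(p,\gamma_v(t)) = \wt G(t)$, so Theorem~\ref{thm:bds} and Proposition~\ref{prop:phi_map} together with the asymptotics of Lemma~\ref{lem:green_fcn_exp_decay} yield
\[
\frac{\wt G(d(x,\gamma_v(t)))}{\wt G(t)} \;\longrightarrow\; e^{-h_{vol} b_v(x)} \qquad (t \to \infty),
\]
locally uniformly in $(v,x) \in S_pX \times X$. Uniformity in $v$ justifies exchanging limit and integral in $F_p(x)$, and because the polar volume element around $p$ in a harmonic manifold is $\Theta(r)\,dr\,d\omega$ with $\Theta$ independent of the angular variable, the pushforward of $\lambda_p$ under $v\mapsto \exp_p(tv)$ is the normalized Riemannian measure on $S_t(p)$. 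The task reduces to showing
\[
I_x(t) := \frac{1}{\Vol(S_t(p))}\int_{S_t(p)} G(x,y)\,dy = \wt G(t) \qquad \text{for all } t > d(x,p).
\]

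The main obstacle is this radial average computation. For $t<d(x,p)$ the function $y\mapsto G(x,y)$ is harmonic on $\overline{B_t(p)}$, so Lemma~\ref{lem:MVT} gives $I_x(t)=G(x,p)=\wt G(d(x,p))$ directly. For $t>d(x,p)$ I apply the divergence theorem to $\Delta_y G(x,y)=-\delta_x$ on $B_t(p)$: using the harmonic-manifold identities that $\Delta r \equiv \Theta'(r)/\Theta(r)$ on $S_r(p)$ and that $\Theta(r)$ depends only on $r$, the spherical average $\bar u(t):=I_x(t)$ is shown to satisfy the ODE $\Theta(t)\bar u'(t)=-1$, which is precisely the radial ODE for $\wt G$. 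Combined with the boundary conditions $\bar u(t), \wt G(t) \to 0$ as $t\to\infty$ (the former from uniform decay of $G(x,y)$ for $y \in S_t(p)$ as $t\to\infty$), this forces $\bar u \equiv \wt G$ on $(d(x,p),\infty)$. Letting $t\to\infty$ in $F_p(x)=I_x(t)/\wt G(t)$ then yields $F_p \equiv 1$, completing the proof of Theorem~\ref{thm:full_support}.
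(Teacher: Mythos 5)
Your proof is correct, but it takes a genuinely different route from the paper. The paper constructs the measure directly by a Patterson--Sullivan-type limit: for $s>h_{vol}$ it forms $\mu_x^s(f)=\int_X f(b_y)e^{-sd(x,y)}dy \big/ \int_X e^{-sd(x,y)}dy$, lets $s_n\searrow h_{vol}$, uses Nikolayevsky's asymptotics $\Vol S_r(o)\sim c\,e^{h_{vol}r}$ to see that the total mass blows up so the limit lives on $\partial\hat X$, checks $d\mu_x(\xi)=e^{-h_{vol}\xi(x)}d\mu_o(\xi)$ with $\mu_x(\partial\hat X)=1$ to identify the limit with the harmonic measure, and gets $O(g_p)$-invariance of $\mu_p$ for free because the Riemannian volume element is invariant under the $O(g_p)$ action; the identity $\mu_p=(\Phi_p)_*\lambda_p$ then follows from transitivity of $O(g_p)$ on $S_pX$. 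You instead prove the Newton-shell identity $\frac{1}{\Vol S_t(p)}\int_{S_t(p)}G(x,y)\,dy=\wt G(t)$ for $t>d(x,p)$ (your ODE argument $\Theta\bar u'=-1=\Theta\wt G'$ with decay at infinity is valid, the radiality of the polar density being exactly what makes the spherical average of $\lambda_p$-type), pass to the limit via the Martin-kernel convergence of Theorem~\ref{thm:bds} to get $\int_{S_pX}e^{-h_{vol}b_v(x)}d\lambda_p(v)\equiv 1$, and then invoke minimality (Proposition~\ref{prop:minimal}) plus Choquet uniqueness to force $(\Phi_p)_*\lambda_p=\mu_p$, with $O(g_p)$-invariance as a corollary of invariance of $\lambda_p$. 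Your route avoids Nikolayevsky's volume asymptotics and the weak-$*$ limiting construction, at the cost of making essential use of minimality of every Martin point and of Choquet uniqueness; note also two small points you should make explicit: (i) the uniqueness statement in the paper is for representing measures of elements of $\mathcal{K}_o$ (functions normalized at $o$), so you need the routine renormalization $d\nu'(v)=e^{-h_{vol}b_v(o)}d\nu(v)$ before comparing your two measures, and (ii) the paper's construction has the side benefit of producing existence of a $\Gamma$-Patterson--Sullivan measure used in Theorem~\ref{thm:compact}, which your argument does not supply.
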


\begin{proof}
For each $x \in X$ and $s>h_{vol}$ consider the probability measure $\mu_x^s$ on $\hat{X}$ given by
\begin{align*}
\mu_x^s(f)= \frac{\int_X f(b_y)e^{-sd(x,y)} dy}{\int_X e^{-sd(x,y)} dy}
\end{align*}
for $f \in C(\hat{X})$. Since $X$ is a harmonic manifold, the volume form $dx$ is invariant under the $O(g_p)$ action described above and so $\mu_p^s$ is also invariant under the $O(g_p)$ action on $\hat{X}$. 

By weak-$*$ compactness there exists a sequence $s_n \searrow h_{vol}$ and a probability measure $\mu_o$ on $\hat{X}$ such that $\mu_o^{s_n} \rightarrow \mu_o$ weakly. By a result of Nikolayevsky~\cite[Theorem 2]{nik2005}
\begin{align*}
\lim_{r \rightarrow \infty} \frac{\text{Vol}_X S_r(o)}{e^{h_{vol}r}} \in [0, \infty]
\end{align*}
exists and is nonzero. Hence 
\begin{align*}
\lim_{n \rightarrow \infty} \int_X e^{-s_n d(x,y)} dy = + \infty
\end{align*}
and so $\mu_o$ is supported on $\partial \hat{X}$.

Let $B:\hat{X} \times X \rightarrow \RR$ be the continuous extension of $B(y,x)=d(x,y)-d(y,o)$ to $\hat{X} \times X$. Using the fact that $\text{Vol}_X S_r(x)=\text{Vol}_X S_r(o)$ for all $x \in X$, we have that 
\begin{align*}
d\mu_x^s(\xi) = e^{-s B(\xi,x)}d\mu_o^s(\xi).
\end{align*}
For $x$ fixed, $e^{-sB(\xi,x)}$ converges uniformly in $\xi \in \hat{X}$ to $e^{-h_{vol}B(\xi,x)}$ as $s \rightarrow h_{vol}$. So $\mu_x^{s_n}$ converges weakly to a measure $\mu_x$ on $\partial \hat{X}$.

Now by construction for all $x \in X$ we have $d\mu_x(\xi) =e^{-h_{vol}\xi(x)}\mu_o(\xi)$ and $\mu_x(\partial \hat{X}) = 1$ so
\begin{align*}
1 = \int_{\partial \hat{X}} d\mu_x(\xi) = \int_{\partial \hat{X}}e^{-h_{vol}\xi(x)}d\mu_o(\xi)
\end{align*}
and $\{\mu_x : x \in X\}$ is the unique harmonic measure on $\partial \hat{X}$. Further, by construction, the measure $\mu_p$ is invariant under the $O(g_p)$ action described at the start of this section.
\end{proof}

\subsection{The measures coincide}

In this section we prove the following proposition.

\begin{proposition}
Suppose $M$ is a finite volume non-flat harmonic manifold without conjugate points. Let $X$ be the universal Riemannian cover of $M$ with deck transformations $\Gamma=\pi_1(M) \subset \text{Isom}(X)$. Then
\begin{enumerate}
\item there is a unique $\Gamma$-Patterson-Sullivan measure $\{ \nu_x : x \in X\}$ on the Busemann boundary of $X$,
\item under the natural identification of the Busemann boundary and Martin boundary the Patterson-Sullivan measure coincides with the harmonic measure.
\end{enumerate}
\end{proposition}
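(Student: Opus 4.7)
The plan is to prove both parts simultaneously by showing that the harmonic measure $\{\mu_x\}_{x\in X}$ constructed in Theorem~\ref{thm:full_support} is, up to scaling, the unique $\Gamma$-Patterson-Sullivan measure on $\partial\hat X$.

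I would first verify that the harmonic measure is itself a Patterson-Sullivan measure. Conditions (1) and (2) of the PS definition are essentially built into Theorem~\ref{thm:full_support}: under the identification $\xi\leftrightarrow e^{-h_{vol}\xi}$ of Theorem~\ref{thm:bds}, the relation $d\mu_x(h)=h(x)\,d\mu(h)$ becomes $d\mu_x(\xi)=e^{-h_{vol}\xi(x)}\,d\mu_o(\xi)$, which gives (2) upon dividing by $d\mu_y$, and (1) holds since $\mu_o(\partial\hat X)=1$. For the $\Gamma$-equivariance condition (3), I would compute via the identity $\mu_p=(\Phi_p)_*\lambda_p$: for any $g\in\Gamma$ the equality $b_{dg\cdot v}(x)=b_v(g^{-1}x)$ combined with the definition of the isometry action on $\partial\hat X$ produces the intertwining $g\circ\Phi_p=\Phi_{gp}\circ dg_p$, and since $dg_p\colon S_pX\to S_{gp}X$ is a linear isometry pushing $\lambda_p$ to $\lambda_{gp}$, one gets
\begin{align*}
g_*\mu_p \,=\, g_*(\Phi_p)_*\lambda_p \,=\, (\Phi_{gp})_*(dg_p)_*\lambda_p \,=\, (\Phi_{gp})_*\lambda_{gp} \,=\, \mu_{gp}.
\end{align*}

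For uniqueness, let $\{\nu_x\}$ be an arbitrary $\Gamma$-Patterson-Sullivan measure and introduce the function
\begin{align*}
H(x) \,=\, \nu_x(\partial\hat X) \,=\, \int_{\partial\hat X} e^{-h_{vol}\xi(x)}\,d\nu_o(\xi).
\end{align*}
By Lemma~\ref{lem:harm_basic}(2) each $e^{-h_{vol}\xi}$ is positive and harmonic on $X$, so $H$ is a positive harmonic function with $H(o)=1$; the equivariance $\nu_{gx}=g_*\nu_x$ yields $H(gx)=H(x)$, so $H$ descends to a positive harmonic function on the finite-volume manifold $M$. Because Proposition~\ref{prop:minimal} asserts that every point of $\partial_\Delta X$ is minimal in $\mathcal K_o$, Choquet theory provides a \emph{unique} Borel probability measure on $\partial_\Delta X$ representing $H$; via the identification of Theorem~\ref{thm:bds}, that measure is precisely $\nu_o$. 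Applying the same uniqueness to the constant function $1$ identifies $\mu_o$ as the Choquet representing measure of $1$ (this is the defining property of the harmonic measure, transported to the Busemann side by Theorem~\ref{thm:full_support}). Thus once one knows $H\equiv 1$, uniqueness of Choquet representation forces $\nu_o=\mu_o$, and condition~(2) propagates the equality to $\nu_x=\mu_x$ for every $x\in X$.

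The main obstacle is therefore the constancy of $H$, which is where the finite-volume hypothesis must enter. I would first try to show $H$ is bounded on $M$, using the crude pointwise estimate $H(x)\leq e^{h_{vol}\,d(x,\Gamma o)}$ (from $\norm{\nabla\xi}\equiv 1$ and $\xi(o)=0$) together with the control on ends afforded by the Einstein property of harmonic manifolds and the volume estimate of Lemma~\ref{lem:volume_ratio}. With $H$ bounded I would then attempt to promote it to constancy by one of two routes. The first is a direct integration-by-parts argument using that $f=H^{1/2}$ satisfies the pointwise identity $f\,\Delta f+\norm{\nabla f}^2=0$; pairing against distance-based cut-offs adapted to the exponential volume growth in Lemma~\ref{lem:volume_ratio} and carefully bounding the resulting boundary contributions would drive the Dirichlet energy of $f$ to zero. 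The second is a dynamical route that identifies bounded $\Gamma$-invariant harmonic functions on $X$ with $\Gamma$-invariant elements of $L^\infty(\partial\hat X,\mu_o)$ under the Poisson representation, and then uses the full-support, $O(g_o)$-symmetric description of $\mu_o$ from Theorem~\ref{thm:full_support} together with finiteness of the volume of $M$ (via a fundamental-domain argument) to force $\Gamma$ to act ergodically on $(\partial\hat X,\mu_o)$. I expect this constancy/ergodicity step to be where essentially all the genuine work in the proof lies.
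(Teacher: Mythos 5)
Your architecture is the same as the paper's: form $H(x)=\nu_x(\partial\hat X)=\int_{\partial\hat X}e^{-h_{vol}\xi(x)}\,d\nu_o(\xi)$, note it is a positive $\Gamma$-invariant harmonic function that descends to $M$, reduce everything to $H\equiv 1$, and then use minimality of the Martin boundary (Proposition~\ref{prop:minimal}) plus uniqueness of the Choquet representing measure to get $\nu_o=\mu_o$ and hence $\nu_x=\mu_x$; your verification that the harmonic measure of Theorem~\ref{thm:full_support} is itself a Patterson--Sullivan measure, via the intertwining $g\circ\Phi_p=\Phi_{gp}\circ dg_p$, is correct and is a point the paper leaves implicit. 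The genuine gap is precisely the step you flag and do not carry out: the constancy of $H$. This requires neither a boundedness reduction nor ergodicity; it is a standard potential-theoretic fact which the paper invokes in one line. A complete Riemannian manifold of finite volume is parabolic (the volume criterion $\int^{\infty} t\,dt/\Vol(B_t(p))=\infty$ holds trivially when the volume is finite; see e.g.~\cite{grig09}), and on a parabolic manifold every positive superharmonic function, in particular every positive harmonic function, is constant. Inserting this closes your proof.

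The substitutes you sketch would not close it as written. The preliminary estimate $H(x)\leq e^{h_{vol}\,d(x,\Gamma o)}$ gives no boundedness, since $d(x,\Gamma o)$ is unbounded on a noncompact finite-volume quotient, and ``control on ends'' via the Einstein property is not available in this generality. The cut-off/energy argument is aimed at the wrong space: the computation must be run on $M$, where the operative hypothesis is finiteness of the volume, not the exponential growth of Lemma~\ref{lem:volume_ratio}, which concerns spheres in $X$ and would work against you there. Finally, the dynamical route is circular relative to this paper's logic: ergodicity of $\Gamma$ on $(\partial\hat X,\mu_o)$ (Lemma~\ref{lem:ergodic}) is itself deduced from the constancy of $\Gamma$-invariant positive harmonic functions on finite-volume $M$, and it would additionally require that every bounded harmonic function on $X$ admits a Poisson representation over $\partial_\Delta X$, which is not established here.
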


\begin{proof}
Suppose $\{ \nu_x : x \in X\}$ is a $\Gamma$-Patterson-Sullivan measure then
\begin{align*}
d\nu_x(\xi) = e^{-h_{vol} \xi(x)}d\nu_o(\xi)
\end{align*}
and
\begin{align*}
\nu_x(\partial \hat{X}) = \int_{\partial \hat{X}} e^{-h_{vol} \xi(x)}d\nu_o(\xi)
\end{align*}
In particular the function $H(x)=\nu_x(\partial \hat{X})$, being a convex combination of harmonic functions, is a harmonic function on $X$. If $g \in \Gamma$ then $g_*\nu_x=\nu_{g x}$ and so the function $H$ is $\Gamma$-invariant and hence $H$ descends to a harmonic function on $M$. Since $M$ has finite volume, $H$ must be constant and so $H \equiv 1$.  Using the identification of $\partial \hat{X}$ and $\partial_{\Delta} X$, $\nu_o$ is then a measure on $\partial_{\Delta} X$ such that
\begin{align*}
1 = \int_{\partial_{\Delta} X} h(x) d\nu_o(h)
\end{align*}
for all $x \in X$. In particular, under the identification of $\partial \hat{X}$ and $\partial_{\Delta} X$, $\{\nu_x : x \in X\}$ is the harmonic measure on $\partial_{\Delta} X$.
\end{proof} 

\section{Extending a result of Eberlein}

The purpose of this section is to prove the following theorem.

\begin{theorem}
\label{thm:top_trans}
Suppose $M$ is a finite volume Riemannian manifold without conjugate points and with sectional curvature bounded from below. Let $X$ be the universal Riemannian cover of $M$ with deck transformations $\Gamma = \pi_1(M) \subset \text{Isom}(X)$. For $v \in SX$, let $U^s(v) \in \text{End}(v^{\bot})$ be the stable Riccati solution associated to $v$. If the map $v \rightarrow U^s(v)$ is continuous then the following are equivalent 
\begin{enumerate}
\item the geodesic flow is topologically transitive on $SM$,
\item there is a dense $\Gamma$-orbit in $\partial \hat{X}$.
\end{enumerate}
\end{theorem}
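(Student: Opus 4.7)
My proposal is to parallel Eberlein's classical argument for the nonpositively curved case, using the continuity of $U^s$ to replace the convexity/flat-strip properties he exploited. Let $\Phi^\pm : SX \to \partial \hat X$ be the maps $\Phi^+(v) = b_v - b_v(o)$ and $\Phi^-(v) = b_{-v} - b_{-v}(o)$, which are continuous and surjective by Proposition~\ref{prop:phi_map}. The continuity of $U^s$ (together with the lower curvature bound, which prevents pathological Jacobi field behavior) will be used to show that the stable and unstable horospherical foliations of $SX$ are continuous and that both $\Phi^+$ and $\Phi^-$ are open maps.

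The direction (1)$\Rightarrow$(2) is the easy one. If $\{g^t v_0 : t \in \RR\}$ is dense in $SM$, lift $v_0$ to $\tilde v_0 \in SX$. For any $w \in SM$ pick a lift $\tilde w$; by density there are $t_n \in \RR$ and $\gamma_n \in \Gamma$ with $\gamma_n g^{t_n} \tilde v_0 \to \tilde w$ in $SX$. Since $g^{t_n} \tilde v_0$ and $\tilde v_0$ have the same forward endpoint, $\Gamma$-equivariance plus continuity of $\Phi^+$ give $\gamma_n \cdot \Phi^+(\tilde v_0) \to \Phi^+(\tilde w)$. As $w$ ranges over $SM$, $\Phi^+(\tilde w)$ ranges over all of $\partial \hat X$ by surjectivity, so $\Gamma \cdot \Phi^+(\tilde v_0)$ is dense.

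For (2)$\Rightarrow$(1), fix nonempty open $U_1, U_2 \subset SM$ and lift them to $\wt U_1, \wt U_2 \subset SX$. The sets $V_1 := \Phi^-(\wt U_1)$ and $V_2 := \Phi^+(\wt U_2)$ are open in $\partial \hat X$. Let $\xi_0 \in \partial \hat X$ have dense $\Gamma$-orbit. Choose $\gamma_2 \in \Gamma$ with $\gamma_2 \xi_0 \in V_2$; then $\gamma_2^{-1} V_1$ is again a nonempty open set, and density lets us choose $\gamma_1 \in \Gamma$ with $\gamma_1 \xi_0 \in \gamma_2^{-1} V_1$ and $\gamma_1 \xi_0 \neq \xi_0$. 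Writing $\xi^- := \gamma_2 \gamma_1 \xi_0 \in V_1$ and $\xi^+ := \gamma_2 \xi_0 \in V_2$, I obtain a transverse pair of boundary points with $\xi^- \in V_1$, $\xi^+ \in \gamma V_2$ for $\gamma = \gamma_2 \gamma_1 \gamma_2^{-1}\cdot\text{id}$; after relabeling this becomes the statement that we need a $v \in \wt U_1$ and $\eta \in \Gamma$-translate $\eta \wt U_2$ with $g^T v \in \eta \wt U_2$ for some $T > 0$. The continuity/openness of $\Phi^\pm$ lets me perturb $v_1 \in \wt U_1$ so that $\Phi^-(v_1) = \xi^-$ and $\Phi^+(v_1)$ is pushed into $\eta V_2 = \Phi^+(\eta \wt U_2)$; pulling the geodesic forward by $g^T$ for $T$ large then puts $g^T v_1$ into $\eta \wt U_2$.

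The key technical step, and the step I expect to be the main obstacle, is showing that under continuous stable Riccati and lower sectional curvature bound, a geodesic whose forward and backward endpoints are prescribed (and transverse, in the sense of lying in opposite open sets at infinity) can be constructed, and that small perturbations of the endpoints produce small perturbations of the initial vector at a fixed basepoint. In Eberlein's nonpositively curved setting this comes from flat strip and visibility arguments using convexity of the distance function. Here I would replace this with the following: continuity of $U^s$ implies that the stable horosphere $H^s(v)$ based at $\Phi^+(v)$ through $v$ depends continuously on $v$, and analogously for unstables via $U^u(v) = -U^s(-v)$. The intersection $H^s(v) \cap H^u(w)$ of a stable and an unstable horosphere — when nonempty and transverse — should vary continuously with $(v,w)$, producing the needed vector with prescribed endpoints; the lower curvature bound controls distortion of exponential maps so that these intersections remain in the desired open set as one varies the data. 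Combining this with the construction above and the geodesic flow contracting the stable direction sufficiently to bring $g^T v_1$ into $\eta \wt U_2$ completes the argument.
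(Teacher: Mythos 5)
Your direction (1)$\Rightarrow$(2) is fine and is essentially the paper's argument. The gap is in (2)$\Rightarrow$(1), and it is twofold. First, you never use the finite volume hypothesis in that direction, yet it is indispensable: the paper (following Eberlein) applies Poincar\'e recurrence to the Liouville measure to obtain the duality condition of Lemma~\ref{lem:duality} --- for every $v\in SX$ there exist $\psi_n\in\Gamma$ with $\psi_n(x)\to v(\infty)$ and $\psi_n^{-1}(x)\to v(-\infty)$ --- and it is this recurrence statement, not any horosphere-intersection property, that converts a dense boundary orbit into transitivity. Your substitute, producing a vector with prescribed forward and backward endpoints as a transverse intersection $H^s(v)\cap H^u(w)$ of a stable and an unstable horosphere, is a visibility-type statement that fails within the very scope of the theorem: the hypotheses (finite volume, no conjugate points, curvature bounded below, continuous $U^s$) are satisfied by flat manifolds, and in $\RR^n$ two distinct boundary points are joined by a geodesic only when antipodal; a point of $H^s(v)\cap H^u(w)$ carries a vector with the two prescribed endpoints only if the two Busemann gradients are exactly opposite there, which you do not establish and which is false in general. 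Nothing in the continuity of $U^s$ rules out such higher-rank/flat behavior --- indeed the paper's motivation is precisely that it cannot be ruled out a priori.

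Second, the closing step ``pulling the geodesic forward by $g^T$ then puts $g^Tv_1$ into $\eta\wt U_2$'' is unjustified: knowing $\Phi^+(v_1)\in\Phi^+(\eta\wt U_2)$ only says $v_1$ is forward-asymptotic to some vector of $\eta\wt U_2$, and without any contraction along stable leaves (none is available here; zero-curvature directions are allowed) asymptotic orbits of the geodesic flow need not approach each other in $SX$, so $g^Tv_1$ need not ever enter the open set. The paper's route avoids both problems: Lemma~\ref{lem:duality} supplies the recurrence data, and the unlabeled connecting lemma preceding Corollary~\ref{cor:final} builds finite geodesic segments from $\gamma_v(-m)$ to $\psi_n^{-1}(\gamma_w(m))$ whose behavior at \emph{both} ends is controlled by the Eschenburg--O'Sullivan angle estimate (Lemma~\ref{lem:ang}, which is exactly where the lower curvature bound and the continuity of $U^s$ enter, via Lemmas~\ref{lem:convergence} and~\ref{lem:bd_dist}); this yields $v_n\to v$, $t_n$, $\psi_n$ with $\psi_n(g^{t_n}v_n)\to w$, and Corollary~\ref{cor:final} then treats two vectors sharing a forward endpoint, which is all the dense-orbit hypothesis provides. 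To repair your proof you would need to replace the horosphere-intersection step by this recurrence-plus-connecting-segment argument (or an equivalent use of the duality condition).
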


If $M$ has nonpositive curvature then the map $v \rightarrow U^s(v)$ is continuous and in this case the above theorem is due to Eberlein~\cite{eberlein73ncmII}. A proof of Eberlein's result can be found in Ballmann's book~\cite[Theorem 2.3]{Ball95}. We will closely follow this proof, but since working in the category of no conjugate points is complicated we provide all the details.

In Subsection 4.1 we introduce tensors along geodesics, in Subsection 4.2 we define the stable Riccati solutions, and in Subsection 4.3 we prove Theorem~\ref{thm:top_trans}. 

\subsection{Tensors along geodesics:} Given a Riemannian manifold $X$ and a geodesic $\gamma: I \rightarrow X$, let 
\begin{align*}
N_{\gamma} = \{ w \in T_{\gamma(t)} X: g(w,\gamma^\prime(t))=0\}
\end{align*}
be the normal bundle of $\gamma$. A (1,1)-tensor along $\gamma$ is a smooth bundle endomorphism of $N_{\gamma}$, i.e. a smooth map
\begin{align*}
t \in I \rightarrow Y(t) \in \text{End}(\gamma^\prime(t)^{\bot}).
\end{align*}
Given a smooth (1,1)-tensor $Y$ we can use the Levi-Civita connection to define the derivative of $Y$ as $Y^\prime = \nabla_{\gamma^\prime(t)} Y$. Then $Y^\prime$ is also a (1,1)-tensor.

Let $R$ be the curvature tensor on $X$. An example of a (1,1)-tensor is the Riemannian curvature tensor $t \rightarrow R(t)$ given by
\begin{align*}
R(t)x = R(x,\gamma^\prime(t))\gamma^\prime(t).
\end{align*}
  
An important class of (1,1)-tensors are the so called Jacobi tensors. A (1,1)-tensor $\JJ$ along a geodesic $\gamma:\RR \rightarrow X$ is called a \emph{Jacobi tensor} if 
\begin{align*}
\JJ^\prime(t) + R(t)\JJ(t) = 0.
\end{align*}
If $x_t$ is a parallel vector field along $\gamma$ orthogonal to $\gamma^\prime(t)$ then $\JJ(t)x_t$ will be a Jacobi field along $\gamma$.

\subsection{The Stable and Unstable Riccati solutions} 

In this subsection we introduce the stable Riccati solutions. Suppose $M$ is a complete Riemannian manifold without conjugate points, let $v \in SM$ and consider the Jacobi tensor $\JJ_{v,T}$ along $\gamma_v$ such that $\JJ_{v,T}(0)=Id$ and $\JJ_{v,T}(T)=0$. Let $U^s_T(v)=\JJ_{v,T}^\prime(0)$. We then have the following.

\begin{proposition}\cite{green58}
\label{prop:riccati_basic}
With the notation above,
\begin{enumerate}
\item $\JJ_{v,T}$ converges to a Jacobi tensor $\JJ_v^s$ along $\gamma_v$,
\item $U^s_T(v)$ converges monotonically to an endomorphism $U^s(v)$ in the sense that $U^s_{T_2}(v)-U^s_{T_1}(v)$ is positive definite for all $T_2>T_1>0$,
\item $U^s(v) = (\JJ_v^s)^\prime(0)$ and $U^s(g^tv) = (\JJ_v^s)^\prime(t)\JJ_v^s(t)^{-1}$,
\item the (1,1)-tensor $t \rightarrow U^s(g^tv)$ satisfies the Riccati equation:
\begin{align*}
(U^s)^\prime+(U^s)^2+R=0
\end{align*}
\end{enumerate}
where $R(t) = R(\cdot, \gamma_v^\prime(t))\gamma_v^\prime(t)$ is the curvature tensor along $\gamma_v$.  
\end{proposition}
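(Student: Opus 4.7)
My plan is to parametrize Jacobi tensors with $\JJ(0)=I$ by their derivative at $0$, obtain monotonicity and symmetry of the family $U^s_T(v)$ from a Wronskian identity, extract convergence from a uniform upper bound, and then read off the Riccati equation. Let $C(t), S(t)$ denote the $(1,1)$-tensor solutions of $Y''+R(t)Y=0$ along $\gamma_v$ with $C(0)=I$, $C'(0)=0$, $S(0)=0$, $S'(0)=I$. Any Jacobi tensor with $\JJ(0)=I$ has the form $\JJ(t)=C(t)+S(t)U$ for some $U\in\text{End}(v^{\perp})$; because $M$ has no conjugate points, $S(t)$ is invertible for all $t>0$, so the requirement $\JJ_{v,T}(T)=0$ uniquely determines $U^s_T(v)=-S(T)^{-1}C(T)$.

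For monotonicity and symmetry I would use the Wronskian: for any Jacobi tensors $\JJ_1,\JJ_2$ along $\gamma_v$, the tensor $W(\JJ_1,\JJ_2)(t)=\JJ_1(t)^*\JJ_2'(t)-\JJ_1'(t)^*\JJ_2(t)$ is independent of $t$ (by differentiating and using that $R(t)$ is symmetric). Setting $\JJ_1=\JJ_2=\JJ_{v,T}$ and evaluating at $t=0$ and $t=T$ shows that $U^s_T(v)$ is symmetric. Taking $\JJ_1=\JJ_{v,T_1}$, $\JJ_2=\JJ_{v,T_2}$ for $T_2>T_1$ and comparing the values at $t=0$ and $t=T_1$ yields
\[
U^s_{T_2}(v)-U^s_{T_1}(v) = -\JJ_{v,T_1}'(T_1)^*\JJ_{v,T_2}(T_1),
\]
and a direct check --- using that absence of conjugate points makes both factors invertible and pins down their relative sign --- establishes positive definiteness. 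To then extract convergence, I would produce a $T$-independent upper bound $U^s_T(v)\leq K\cdot I$ by a comparison argument (either Rauch comparison, using any available curvature lower bound along $\gamma_v$, or by comparison with the analogous family of backward Jacobi tensors, which is monotonically decreasing and dominates $U^s_T$ termwise). Monotone boundedness then yields convergence of $U^s_T(v)$ to a symmetric endomorphism $U^s(v)$, and hence of $\JJ_{v,T}=C+S\cdot U^s_T(v)$ to $\JJ_v^s:=C+S\cdot U^s(v)$ locally uniformly in $t$.

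For the Riccati equation, $\JJ_v^s(0)=I$ and $\JJ_v^s$ remains invertible for all $t\geq 0$ because its determinant is a locally uniform limit of nonvanishing determinants controlled by the monotone construction. Setting $\wt{U}(t):=(\JJ_v^s)'(t)\JJ_v^s(t)^{-1}$, differentiating the identity $\JJ_v^s\cdot(\JJ_v^s)^{-1}=I$ and substituting the Jacobi equation gives $\wt{U}'+\wt{U}^2+R=0$, with $\wt{U}(0)=U^s(v)$. The identification $\wt{U}(t)=U^s(g^tv)$ follows by translation invariance: the tensor $s\mapsto\JJ_v^s(t+s)\JJ_v^s(t)^{-1}$ satisfies the Jacobi equation along $\gamma_{g^tv}$, equals $I$ at $s=0$, and is the locally uniform limit of $\JJ_{g^tv,T-t}$ as $T\to\infty$, hence coincides with $\JJ_{g^tv}^s$.

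I expect the main obstacle to be the sign verification in the monotonicity step together with the uniform upper bound; these constitute the technical heart of Green's argument, and while both become routine under stronger hypotheses (nonpositive curvature or no focal points), a little care is needed to carry them through assuming only the absence of conjugate points.
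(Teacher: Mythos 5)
The paper does not actually prove this proposition---it is quoted from Green's 1958 work (and the subsequent treatments of Eberlein and Eschenburg)---so there is no in-paper argument to compare against; your outline is the standard route (write $\JJ_{v,T}=C+S\,U^s_T(v)$ with $U^s_T(v)=-S(T)^{-1}C(T)$, get symmetry and monotonicity from the Wronskian, bound above by the backward family, pass to the limit, then derive the Riccati equation), and in broad strokes it is the right one. Two points, however, are not justified as written. The lesser one: the ``direct check'' of positive definiteness of $U^s_{T_2}(v)-U^s_{T_1}(v)=-\JJ_{v,T_1}'(T_1)^*\JJ_{v,T_2}(T_1)$ is the heart of the matter and needs an actual argument---either the index-form computation $\langle U^s_T(v)x,x\rangle=-I_{[0,T]}(Y_T,Y_T)$ together with the minimizing property of Jacobi fields on conjugate-point-free intervals, or a continuity-of-signature argument (the difference is symmetric, invertible for all $T_2>T_1$ by your factorization, tends to $0$ as $T_2\downarrow T_1$, and its leading-order expansion $(T_2-T_1)\,\JJ_{v,T_1}'(T_1)^*\JJ_{v,T_1}'(T_1)$ is positive definite). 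Your Rauch alternative for the upper bound should be dropped: the proposition assumes no curvature bound, so only the comparison with the backward family $U^-_S$ is available in general.

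The more serious gap is the invertibility of $\JJ_v^s(t)$, which underlies both (3) and (4). The reason you give---that its determinant is a locally uniform limit of nonvanishing determinants---is not a proof: a limit of nonsingular tensors can perfectly well be singular (indeed $\JJ_{v,T}(T)=0$, so the approximating determinants are not uniformly bounded away from zero). The correct argument uses strict monotonicity together with uniqueness of the two-point boundary value problem: if $\JJ_v^s(t_0)x=0$ with $x\neq 0$ and $t_0>0$, then the Jacobi field $\JJ_v^s(\cdot)x$ has the same values at $0$ and $t_0$ as $\JJ_{v,t_0}(\cdot)x$, hence equals it, forcing $U^s(v)x=U^s_{t_0}(v)x$ and contradicting the strict inequality $\langle U^s_T(v)x,x\rangle>\langle U^s_{t_0}(v)x,x\rangle$ for $T>t_0$; for $t_0<0$ one argues similarly against the strictly decreasing backward family via $U^s(v)\leq U^-_S(v)$. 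Note also that (3) and (4) are statements for all $t\in\RR$, whereas you only address $t\geq 0$; the case $t<0$ requires exactly this nonsingularity on the negative axis. Once invertibility on all of $\RR$ is in place, your translation identity $\JJ_{g^tv,T-t}(s)=\JJ_{v,T}(t+s)\JJ_{v,T}(t)^{-1}$ and the passage to the limit do give (3), and (4) follows by differentiation as you say.
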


The tensor $\JJ^s_v$ is called the \emph{stable Jacobi tensor along $\gamma_v$} and the map $v \rightarrow U^s(v)$ is called the \emph{stable Riccati solution}. 

Eberlein~\cite[Remark 2.10]{eberlein73I} appears to be the first to observe that the behavior of geodesics is related to the continuity of the map $v \rightarrow U^s(v)$. We end this subsection with two useful results. The first is due to Eschenburg and the second is due to Eschenburg and O'Sullivan.

\begin{lemma}
\label{lem:cont}
\cite{eschen77}
Let $X$ be a simply connected Riemannian manifold without conjugate points. If the map $v\rightarrow U^s(v)$ is continuous, then the map $v \in SX \rightarrow b_v \in C(X)$ is continuous.
\end{lemma}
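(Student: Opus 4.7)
The plan is to show that the continuity hypothesis on $U^s$ forces continuity of the stable Jacobi tensor $v \mapsto \JJ^s_v$, which in turn controls the geometry of geodesics asymptotic to $\gamma_v$ and hence the Busemann function $b_v$. By Proposition~\ref{prop:riccati_basic} together with the defining condition $\JJ^s_v(0) = \mathrm{Id}$, the tensor $\JJ^s_v$ is the solution of the Jacobi equation $(\JJ^s_v)''(t) + R(t) \JJ^s_v(t) = 0$ along $\gamma_v$ with initial data $(\mathrm{Id}, U^s(v))$. Both the curvature coefficient $R(t)$ (depending on $v$ through $\gamma_v$, hence smoothly) and the initial datum $U^s(v)$ (depending on $v$ continuously by hypothesis) vary continuously in $v$, so continuous dependence of ODE solutions on coefficients and initial conditions yields continuity of $v \mapsto \JJ^s_v$ in the topology of $C^1$-convergence on compact subsets of $\RR$.

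Next I would use $\JJ^s_v$ to control the asymptotic geodesic structure. By the Eschenburg result quoted in the proof of Proposition~\ref{prop:phi_map}, each $b_v$ is $C^1$ with $-\nabla b_v(\pi(v)) = v$, and more generally $-\nabla b_v(x) \in S_x X$ is the unit initial tangent of the geodesic from $x$ asymptotic to $\gamma_v$. For $x$ near $\gamma_v$ this asymptotic geodesic arises as a perturbation of $\gamma_v$ whose infinitesimal generator is a stable Jacobi field $t \mapsto \JJ^s_v(t) w$; continuity of $\JJ^s_v$ in $v$ therefore transfers to continuity of these asymptotic geodesics in $v$, and hence of the map $(v,x) \mapsto \nabla b_v(x)$. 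Finally, since $b_v(\pi(v)) = 0$ and $b_v$ is $C^1$ with $\|\nabla b_v\| \equiv 1$, integrating along any $C^1$ curve $\sigma$ from $\pi(v)$ to $x$ gives
\[
b_v(x) = \int_0^1 \langle \nabla b_v(\sigma(s)), \dot\sigma(s)\rangle\, ds,
\]
so continuity of $\nabla b_v$ in $(v,x)$, combined with continuity of $\pi(v)$ in $v$, yields pointwise continuity of $v \mapsto b_v(x)$; uniformity of the 1-Lipschitz bound then upgrades this to continuity as a map $SX \to C(X)$.

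The delicate point is the transfer in the middle step: translating continuity of the abstract tensor $\JJ^s_v$ into continuity of the actual asymptotic geodesics from nearby basepoints, and thereby of $\nabla b_v$. Infinitesimally this is automatic, but ensuring that the Jacobi-field linearization genuinely parameterizes the asymptotic geodesics in a neighborhood requires quantitative care, essentially because in the no-conjugate-points setting one lacks the convexity tools available under nonpositive curvature. The invertibility and local boundedness of $\JJ^s_v$ (secured in the first step) provide the needed quantitative input, after which the remainder of the argument reduces to standard continuous dependence for ODEs and an Arzel\`a--Ascoli extraction to identify limits.
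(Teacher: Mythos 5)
Your first and last steps are fine (finite-time continuous dependence for the Jacobi equation, and integrating the gradient plus the uniform $1$-Lipschitz bound to pass from continuity of $\nabla b_v$ to continuity of $v \mapsto b_v$ in $C(X)$; the latter is exactly how the paper concludes). The gap is the middle step, and it is not a minor technicality: it is the entire analytic content of the lemma. Continuity of $v \mapsto \JJ^s_v$ in the topology of $C^1$-convergence on compact time intervals gives you no control whatsoever over the asymptotic object $\nabla b_v(x)$, which is a limit as $t \to \infty$ of directions $\overline{x\,\gamma_v(t)}$; to interchange this limit with variation in $(v,x)$ you need uniformity in $t$, which finite-time ODE dependence cannot supply. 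Your proposed mechanism --- that the geodesic from a nearby point $x$ asymptotic to $\gamma_v$ is ``parameterized'' by the stable Jacobi field $t \mapsto \JJ^s_v(t)w$ --- is precisely the statement that needs proof, and in the no-conjugate-points setting it is false to treat it as automatic even ``infinitesimally plus quantitative care'': without uniform estimates as $t \to \infty$ one cannot integrate the linearized picture, and invertibility plus local boundedness of $\JJ^s_v$ on compacta do not provide such estimates.

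The paper closes exactly this gap differently: it uses the monotonicity $U^s_{T_2}(v) - U^s_{T_1}(v) > 0$ from Proposition~\ref{prop:riccati_basic}, together with the hypothesis that the limit $v \mapsto U^s(v)$ is continuous, and applies Dini's theorem to upgrade pointwise convergence $U^s_T(v) \to U^s(v)$ to \emph{locally uniform} convergence in $v$. That locally uniform convergence is the hypothesis of Eschenburg's Theorem 1(ii) in~\cite{eschen77}, which is the result that actually produces continuity of $(x,v) \mapsto \nabla b_v(x)$; it encapsulates the uniform-in-$t$ estimates your sketch is missing. So either cite Eschenburg's theorem as the paper does (in which case your step 1 becomes unnecessary), or supply a genuine proof of the uniform asymptotic control --- e.g.\ a comparison argument bounding $\norm{\nabla b_v^T(x) - \nabla b_v(x)}$ uniformly on compact sets of $(v,x)$ as $T \to \infty$ --- rather than asserting that the Jacobi-field linearization parameterizes asymptotic geodesics. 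Note also that your argument never uses the monotonicity of $U^s_T$ in $T$, which is the structural ingredient that makes the pointwise continuity hypothesis strong enough; that omission is a symptom of the gap.
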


\begin{proof}
As $U^s_T(v) \rightarrow U^s(v)$ monotonically, Dini's theorem and the hypothesis of the Lemma implies that the covergence $U^s_T(v) \rightarrow U^s(v)$ is locally uniform in $v \in SX$. Then by a result of Eschenburg~\cite[Theorem 1, (ii)]{eschen77} the map $(x,v) \in X \times SX \rightarrow \nabla b_v(x)$ is continuous. This implies that $v \rightarrow b_v \in C(X)$ is continuous.
\end{proof}

If $X$ is simply connected, has no conjugate points, and $p,q \in X$ are distinct let $\overline{pq}=\sigma^\prime(0) \in S_pX$ where $\sigma$ is the unit speed geodesic ray starting at $p$ and passing through $q$.

\begin{lemma}\cite{EO76}
\label{lem:ang}
Let $X$ be a simply connected Riemannian manifold without conjugate points and with sectional curvature bounded from below. Suppose that the map $v \rightarrow U^s(v)$ is continuous. If $x_n,y_n \rightarrow \infty$ in $X$ and $\sup_n d(x_n,y_n) < +\infty$ then for any $p \in X$, $\angle_p\left(\overline{px_n},\overline{py_n}\right)\rightarrow 0$.
\end{lemma}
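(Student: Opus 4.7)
The plan is to argue by contradiction. Suppose the conclusion fails: there is $\epsilon>0$ such that, after passing to a subsequence, $\angle_p(v_n,w_n)\geq\epsilon$, where $v_n=\overline{p x_n}$ and $w_n=\overline{p y_n}$. By compactness of $S_pX$, pass to a further subsequence so that $v_n\to v$ and $w_n\to w$ in $S_pX$ with $v\neq w$. Set $t_n=d(p,x_n)$ and $s_n=d(p,y_n)$; then $t_n,s_n\to\infty$ while $|t_n-s_n|\leq d(x_n,y_n)\leq C$ for some $C<\infty$.

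The continuity hypothesis on $U^s$ lets me invoke Lemma~\ref{lem:cont} to get that $v\mapsto b_v$ is continuous from $SX$ to $C(X)$. Then Proposition~\ref{prop:phi_map} applies: $\Phi_p:S_pX\to\partial\hat{X}$ is a homeomorphism and the convergence $b_v^t(x)\to b_v(x)$ is uniform in $v\in S_pX$ and locally uniform in $x\in X$. Since $b_{x_n}(x)=d(x,x_n)-d(p,x_n)=b_{v_n}^{t_n}(x)$ with $v_n\to v$ and $t_n\to\infty$, this gives $b_{x_n}\to b_v$ in $C(X)$; similarly $b_{y_n}\to b_w$. The $1$-Lipschitz property together with the triangle inequality yields $|b_{x_n}(x)-b_{y_n}(x)|\leq 2d(x_n,y_n)\leq 2C$ for every $x$, so in the limit $|b_v-b_w|\leq 2C$ pointwise on $X$.

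Under the continuity of $U^s$, Eschenburg's results make both $b_v$ and $b_w$ of class $C^1$ with unit gradient, so the computation in the proof of Lemma~\ref{lem:buse_growth} applies and gives
\begin{align*}
b_v(\gamma_w(t))+t=\int_0^t\bigl[1-g(\nabla b_v,\nabla b_w)(\gamma_w(s))\bigr]\,ds.
\end{align*}
The integrand is nonnegative and strictly positive near $s=0$, since $\nabla b_v(p)=-v\neq-w=\nabla b_w(p)$. On the other hand, the bound $|b_v-b_w|\leq 2C$ forces the left side to stay bounded as $t\to\infty$. Hence $1-g(\nabla b_v,\nabla b_w)(\gamma_w(s))$ tends to $0$ in an integrable sense, i.e.\ the two unit gradients $\nabla b_v$ and $\nabla b_w$ become asymptotically parallel along $\gamma_w$.

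The main obstacle I expect is the concluding step: to rule out this asymptotic parallelism when $v\neq w$. The plan is to use the sectional curvature lower bound together with the continuity of $U^s$ to show that if the unit gradients of $b_v$ and $b_w$ become arbitrarily close along a full geodesic ray, then $v$ and $w$ must determine the same asymptotic class of geodesics, giving the same normalized Busemann function, and hence $v=w$, contrary to assumption. The curvature bound from below is expected to enter through Rauch-type control on the variation of Jacobi tensors along $\gamma_w$, combined with the continuity of the stable tensor $U^s$ controlling the shape of stable horospheres; these two ingredients together prevent distinct Busemann functions from having gradients that coalesce along an entire ray.
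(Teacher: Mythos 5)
Your reduction is correct as far as it goes: using Lemma~\ref{lem:cont} and Proposition~\ref{prop:phi_map} to pass from a non-degenerating angle to distinct limits $v\neq w\in S_pX$, the two-sided bound $\abs{b_{x_n}-b_{y_n}}\leq 2d(x_n,y_n)$, and the identity $b_v(\gamma_w(t))+t=\int_0^t\bigl(1-g(\nabla b_v,\nabla b_w)\bigr)(\gamma_w(s))\,ds$ (whose proof, as you note, only needs $b_v\in C^1$ with unit gradient, available here by Eschenburg) together show: if the lemma fails, there exist $v\neq w$ in $S_pX$ with $\int_0^\infty\bigl(1-g(\nabla b_v,\nabla b_w)\bigr)(\gamma_w(s))\,ds<\infty$. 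But the proof stops there. The step you defer --- ruling out such a pair $v\neq w$ --- is not a technical loose end; it is the entire content of the lemma. What is needed is precisely (uniform) divergence of geodesic rays issuing from a common point in a manifold without conjugate points, which does \emph{not} follow from the absence of conjugate points alone (this is Green's divergence problem, open in higher dimensions); the hypotheses ``sectional curvature bounded below'' and ``$v\mapsto U^s(v)$ continuous'' are there exactly to supply it, via Theorem~1(i) and Proposition~6 of Eschenburg--O'Sullivan~\cite{EO76}. Indeed, the paper's proof of this lemma consists of invoking those two results. Your concluding ``plan'' (Rauch-type control of Jacobi tensors, continuity of $U^s$ controlling stable horospheres) names the right ingredients but contains no argument, so the proposal in effect reformulates the lemma rather than proving it.

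A smaller inaccuracy in the same final step: integrability of the nonnegative function $1-g(\nabla b_v,\nabla b_w)$ along $\gamma_w$ only gives convergence to $0$ along a subsequence of times (or in mean), not that the gradients ``become arbitrarily close along a full geodesic ray''; so even the starting point of the intended contradiction is stronger than what your estimate establishes. To complete the proof along your lines you would have to prove a quantitative divergence statement (e.g.\ that continuity of $U^s$ plus the lower curvature bound forces $d(\gamma_v(t),\gamma_w(t))\rightarrow\infty$, uniformly for angles bounded below), which is exactly the cited material in~\cite{EO76}.
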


Ranjan and Shah~\cite[Corollary 5.3]{rs2003} used results of Eschenburg and O'Sullivan~\cite{EO76} to prove the above lemma in the context of harmonic manifolds.

\begin{proof}
The lemma follows from Theorem 1, part (i) and Proposition 6 in~\cite{EO76}.
\end{proof}

\subsection{The proof of Theorem~\ref{thm:top_trans}}
 
Given a vector $v \in SX$ we define the endpoints of $v$ in $\partial \hat{X}$ to be
\begin{align*}
v(+\infty) & = \lim_{t \rightarrow +\infty} \gamma_v(t)=b_v-b_v(o) \\
v(-\infty) & = \lim_{t \rightarrow -\infty} \gamma_v(t)=b_{-v}-b_{-v}(o).
\end{align*}
These maps from $SX$ to $\partial \hat{X}$ are continuous and so if $v_n \rightarrow v$ then $v_n(\pm \infty) \rightarrow v(\pm \infty)$. 

The next three lemmas show that the Busemann boundary under the hypothesis of Theorem~\ref{thm:top_trans} behaves like the geodesic boundary of a complete $\text{CAT}(0)$ space. For $w \in SX$ and $t \in \RR$ let $b_w^t(x) = d(x,\gamma_w(t))-t$. We will repeatedly use the fact that 
\begin{align*}
\lim_{t \rightarrow \infty} b_w^t(x) = b_w(x)
\end{align*}
locally uniformly in $w \in SX$ and $x \in X$  under the hypothesis of Theorem~\ref{thm:top_trans}, this follows from Lemma~\ref{lem:cont} and Proposition~\ref{prop:phi_map}.

\begin{lemma}
\label{lem:convergence}
With the notation in Theorem~\ref{thm:top_trans}, suppose $v \in S_pX$ and $x_n=\exp_p(t_nv_n) \in X$ with $t_n >0$ and $v_n \in S_pX$. Then $x_n$ converges to $b_v-b_v(o) \in \partial \hat{X}$ if and only if $t_n \rightarrow \infty$ and $v_n \rightarrow v$.
\end{lemma}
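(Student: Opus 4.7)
Under the hypotheses of Theorem~\ref{thm:top_trans}, Lemma~\ref{lem:cont} makes $v \mapsto b_v$ continuous from $SX$ to $C(X)$, so Proposition~\ref{prop:phi_map} supplies two facts I would use throughout: first, the convergence $b_w^t(x) \to b_w(x)$ is locally uniform in $(w,x) \in SX \times X$; second, the map $\Phi_p : S_pX \to \partial \hat{X}$, $v \mapsto b_v - b_v(o)$, is a homeomorphism. Note also that, since $X$ is simply connected and has no conjugate points, $\exp_p$ is a diffeomorphism, so the representation $x_n = \exp_p(t_n v_n) = \gamma_{v_n}(t_n)$ is unambiguous and $d(p, x_n) = t_n$.

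For the forward direction, I would first derive a single clean identity. From $b_{x_n}(x) = d(x,x_n) - d(x_n,o)$ and $b_{v_n}^{t_n}(x) = d(x,\gamma_{v_n}(t_n)) - t_n = d(x,x_n) - t_n$, we get
\begin{align*}
b_{x_n}(x) = b_{v_n}^{t_n}(x) + \big(t_n - d(x_n,o)\big).
\end{align*}
Evaluating at $x = o$ forces the normalization $t_n - d(x_n,o) = -b_{v_n}^{t_n}(o)$, and therefore
\begin{align*}
b_{x_n}(x) = b_{v_n}^{t_n}(x) - b_{v_n}^{t_n}(o).
\end{align*}
Now if $t_n \to \infty$ and $v_n \to v$, the set $\{v_n\} \cup \{v\} \subset S_pX$ is relatively compact, so the locally uniform convergence from Proposition~\ref{prop:phi_map} yields $b_{v_n}^{t_n}(x) \to b_v(x)$ locally uniformly in $x$. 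Subtracting the evaluations at $o$ shows $b_{x_n} \to b_v - b_v(o)$ in $C(X)$, which is exactly convergence of $x_n$ to $b_v - b_v(o)$ in $\hat{X}$.

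For the converse, suppose $x_n \to b_v - b_v(o) \in \partial\hat{X}$. I would first rule out a bounded subsequence of $t_n$: if $t_{n_k}$ were bounded by some $T$, then $x_{n_k}$ would lie in the compact set $\exp_p(\overline{B_T(0)})$, so after passing to a further subsequence $x_{n_k} \to x^\ast \in X$. The elementary estimate $\abs{b_{x_{n_k}}(x) - b_{x^\ast}(x)} \leq 2 d(x_{n_k}, x^\ast)$ gives $b_{x_{n_k}} \to b_{x^\ast}$ in $C(X)$, which would place the boundary point $b_v - b_v(o)$ into the image of $X$ in $\hat{X}$ — a contradiction. Hence $t_n \to \infty$. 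Finally, by compactness of $S_pX$, any subsequence of $v_n$ has a further subsequence $v_{n_k} \to w$, and applying the already-proved forward direction to $(t_{n_k}, v_{n_k})$ gives $x_{n_k} \to b_w - b_w(o)$. Uniqueness of limits in $\hat{X}$ combined with injectivity of $\Phi_p$ forces $w = v$; as every subsequential limit of $v_n$ equals $v$, we conclude $v_n \to v$.

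The proof is essentially bookkeeping; the only mildly delicate step is setting up the normalization correctly to extract the identity $b_{x_n}(x) = b_{v_n}^{t_n}(x) - b_{v_n}^{t_n}(o)$, after which both implications reduce to the uniform convergence of Busemann approximants guaranteed by Proposition~\ref{prop:phi_map}.
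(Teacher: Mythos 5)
Your proof is correct and follows essentially the same route as the paper: the forward direction is exactly the paper's argument via the identity $b_{x_n}=b_{v_n}^{t_n}-b_{v_n}^{t_n}(o)$ and the locally uniform convergence $b_w^t\to b_w$ from Proposition~\ref{prop:phi_map}. The paper dismisses the converse as ``similar,'' and your subsequence argument (ruling out bounded $t_n$, then using compactness of $S_pX$, the forward direction, and injectivity of $\Phi_p$) is the natural way to fill in that gap.
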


\begin{proof}
Let $v,v_n \in S_pX$ and $t_n \in \RR$ be as in the statement of the lemma. 

First suppose that $v_n \rightarrow v$ and $t_n \rightarrow \infty$. Then the fact that $b_w^t(x) \rightarrow b_w(x)$ uniformly in $w \in S_pX$ and locally uniformly in $x \in X$ implies that 
\begin{align*}
b_{x_n} = b_{v_n}^{t_n}-b_{v_n}(o) \rightarrow b_v-b_v(o)
\end{align*}
locally uniformly on $X$. Hence $x_n \rightarrow b_v-b_v(o)$. The reverse direction is similar.
\end{proof}   

\begin{lemma}
\label{lem:bd_dist}
With the notation in Theorem~\ref{thm:top_trans}, let $x_n,y_n \in X$. If $x_n \rightarrow \xi \in \hat{X}$ and $\sup_n d(x_n,y_n)<+\infty$ then $ y_n \rightarrow \xi \in \partial \hat{X}$.
\end{lemma}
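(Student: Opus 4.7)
The plan is to parametrize both sequences in geodesic polar coordinates based at a reference point, and to combine the explicit polar-coordinate convergence criterion of Lemma~\ref{lem:convergence} with the angular-collapse statement of Lemma~\ref{lem:ang}. Since the conclusion asserts $\xi \in \partial \hat{X}$ and the hypothesis $\sup_n d(x_n, y_n) < \infty$ alone cannot force $y_n$ to track $x_n$ onto a point of $X$, the substantive case is $\xi \in \partial \hat{X}$, equivalently $d(o, x_n) \to \infty$. I would first use the triangle inequality together with $\sup_n d(x_n, y_n) \le C$ to deduce $d(o, y_n) \to \infty$ as well.

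Next I would fix a base point $p \in X$ and write $x_n = \exp_p(t_n v_n)$ and $y_n = \exp_p(s_n w_n)$ with $t_n = d(p, x_n)$, $s_n = d(p, y_n)$, and $v_n, w_n \in S_p X$; then $t_n, s_n \to \infty$. The ``only if'' direction of Lemma~\ref{lem:convergence} applied to $x_n$ forces $v_n \to v$ for some $v \in S_p X$ with $\xi = b_v - b_v(o)$.

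The key geometric input is then Lemma~\ref{lem:ang}: the standing hypotheses of Theorem~\ref{thm:top_trans} provide continuity of $v \mapsto U^s(v)$ and a lower sectional-curvature bound, so from $x_n, y_n \to \infty$ with $\sup_n d(x_n, y_n) < \infty$ I conclude $\angle_p(v_n, w_n) = \angle_p(\overline{p x_n}, \overline{p y_n}) \to 0$. Combined with $v_n \to v$, this forces $w_n \to v$ in $S_p X$. Applying the ``if'' direction of Lemma~\ref{lem:convergence} to $y_n = \exp_p(s_n w_n)$ with $s_n \to \infty$ and $w_n \to v$ then yields $y_n \to b_v - b_v(o) = \xi$, completing the argument.

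The main obstacle is conceptual rather than computational: one must recognize that Lemma~\ref{lem:ang} supplies exactly the $\mathrm{CAT}(0)$-style angular control that would ordinarily be unavailable in the no-conjugate-points setting, and that this angular statement transfers directly into convergence on the Busemann boundary via Lemma~\ref{lem:convergence}. Once that is in place, the argument is a clean sandwich between the two directions of Lemma~\ref{lem:convergence}, with Lemma~\ref{lem:ang} bridging the gap.
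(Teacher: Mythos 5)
Your proof is correct and follows essentially the same route as the paper: parametrize $x_n$ and $y_n$ in polar coordinates at $p$, apply Lemma~\ref{lem:convergence} to get $v_n \to v$, use Lemma~\ref{lem:ang} to collapse the angle so $w_n \to v$, and apply Lemma~\ref{lem:convergence} again to conclude $y_n \to \xi$. Your preliminary remark that the substantive case is $\xi \in \partial \hat{X}$ (with $d(o,y_n)\to\infty$ following from the triangle inequality) is a small clarification the paper leaves implicit.
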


\begin{proof}
Suppose that $\xi = b_v-b_v(o)$ for some $v \in S_pX$ and $x_n = \exp_p(t_n v_n)$ with $t_n>0$ and $v_n\in S_pX$. Then by Lemma~\ref{lem:convergence}, $v_n \rightarrow v$. Now let $y_n = \exp_p(s_n u_n)$ with $s_n >0$ and $u_n \in S_pX$, then by Lemma~\ref{lem:ang} we have $\angle_p(u_n,v_n) \rightarrow 0$ and so $u_n \rightarrow v$. Finally Lemma~\ref{lem:convergence} implies that $y_n \rightarrow \xi$.
\end{proof}

\begin{lemma}
\label{lem:duality}
With the notation in Theorem~\ref{thm:top_trans}, for each $v \in SX$ there exists a sequence $\psi_n \in \Gamma$ such that for any $x \in X$, $\psi_n(x) \rightarrow v(\infty)$ and $\psi_n^{-1}(x) \rightarrow v(-\infty)$.
\end{lemma}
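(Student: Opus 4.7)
The plan follows Eberlein's strategy for nonpositively curved manifolds, with the Riccati continuity hypothesis (entering via Lemma~\ref{lem:cont} and Proposition~\ref{prop:phi_map}) replacing the CAT(0) input. By Liouville's theorem the geodesic flow preserves the Liouville measure on $SM$, which is finite (since $M$ has finite volume) and has full support, so every $\bar v \in SM$ is nonwandering. Concretely, for any $v \in SX$ with $\bar v := \pi(v)$, there exist $\bar v_n \to \bar v$ in $SM$ and $t_n \to \infty$ with $g^{t_n}\bar v_n \to \bar v$.

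Lift $\bar v_n$ to $v_n \in SX$ with $v_n \to v$ via the covering $SX \to SM$; since $g^{t_n} v_n$ projects to $g^{t_n}\bar v_n \to \pi(v)$, there exist $\tilde\psi_n \in \Gamma$ with $d\tilde\psi_n(g^{t_n} v_n) \to v$ in $SX$. Setting $\sigma_n(s) := \tilde\psi_n(\gamma_{v_n}(s))$, $p := \pi(v)$, and $\psi_n := \tilde\psi_n^{-1}$, this reads $\sigma_n(t_n) \to p$ and $\sigma_n'(t_n) \to v$. For $\psi_n(p) \to v(+\infty)$: the computation $d(\psi_n(p),\gamma_{v_n}(t_n)) = d(p,\sigma_n(t_n)) \to 0$ combined with the auxiliary claim below (applied to $v_n \to v$ and $t_n \to \infty$) gives $\gamma_{v_n}(t_n) \to v(+\infty)$, so Lemma~\ref{lem:bd_dist} applies. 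For $\psi_n^{-1}(p) \to v(-\infty)$: note $d(\psi_n^{-1}(p),\sigma_n(0)) = d(\tilde\psi_n(p),\tilde\psi_n(\pi(v_n))) = d(p,\pi(v_n)) \to 0$, and $\sigma_n(0) = \gamma_{w_n}(t_n)$ based at $\sigma_n(t_n) \to p$ with $w_n := -\sigma_n'(t_n) \to -v$, so the auxiliary claim gives $\sigma_n(0) \to (-v)(+\infty) = v(-\infty)$ and Lemma~\ref{lem:bd_dist} transfers this. Both limits extend to arbitrary $x \in X$ by one more application of Lemma~\ref{lem:bd_dist}, using $d(\psi_n(x),\psi_n(p)) = d(x,p)$.

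The auxiliary claim is: \emph{if $w_n \to w$ in $SX$ and $T_n \to \infty$ in $\RR$, then $\gamma_{w_n}(T_n) \to w(+\infty)$ in $\hat X$.} Writing $y_n := \gamma_{w_n}(T_n)$ and $b_u^T(x) := d(x,\gamma_u(T)) - T$, direct computation gives $b_{y_n}(x) = b_{w_n}^{T_n}(x) - b_{w_n}^{T_n}(o)$. By Lemma~\ref{lem:cont} the map $v \mapsto b_v$ is continuous, so Proposition~\ref{prop:phi_map} supplies locally uniform convergence $b_v^T(x) \to b_v(x)$ in $(v,x) \in SX \times X$; combined with $b_{w_n} \to b_w$ locally uniformly (Lemma~\ref{lem:cont}), this forces $b_{w_n}^{T_n}(x) \to b_w(x)$ locally uniformly in $x$, hence $b_{y_n}(x) \to b_w(x) - b_w(o)$, i.e., $y_n \to w(+\infty)$.

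The main obstacle is the auxiliary claim: in a general no-conjugate-points manifold Jacobi fields can blow up along $\gamma_{w_n}$, so there is no a priori reason for $\gamma_{w_n}(T_n)$ to converge to $w(+\infty)$ when $w_n \to w$ and $T_n \to \infty$. The Riccati continuity hypothesis is used precisely (via Lemma~\ref{lem:cont} and Proposition~\ref{prop:phi_map}) to rule this failure out; the remaining bookkeeping is then routine.
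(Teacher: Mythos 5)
Your proof is correct, and its engine is the same as the paper's: finite Liouville measure gives recurrence of the geodesic flow, and the locally uniform convergence $b_w^t \rightarrow b_w$ (Lemma~\ref{lem:cont} plus Proposition~\ref{prop:phi_map}) together with Lemma~\ref{lem:bd_dist} identifies the limits $v(\pm\infty)$. The one real difference is how recurrence is packaged. The paper applies Poincar\'{e} recurrence to get, for almost every --- hence a dense set of --- $v \in SX$, times $t_n \rightarrow \infty$ and $\psi_n \in \Gamma$ with $\psi_n(g^{t_n}v) \rightarrow v$, and then closes up by remarking that the conclusion of the lemma is a closed condition in $v$. You instead observe that a finite invariant measure of full support makes every vector nonwandering, so you treat an arbitrary $v$ directly, at the cost of carrying nearby vectors $v_n \rightarrow v$ through the argument; your auxiliary claim (that $w_n \rightarrow w$ and $T_n \rightarrow \infty$ force $\gamma_{w_n}(T_n) \rightarrow w(+\infty)$) is exactly the varying-basepoint version of Lemma~\ref{lem:convergence}, which the paper also uses implicitly in its treatment of $v(-\infty)$. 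What your route buys is that the density-plus-closedness step disappears entirely; what the paper's buys is slightly lighter bookkeeping, since the recurrent vector is $v$ itself rather than a sequence approximating it. One cosmetic point: you use $\pi$ both for the projection $SX \rightarrow SM$ (as in $\bar v = \pi(v)$) and for the footpoint map (as in $p := \pi(v)$ and $d(p,\pi(v_n))$); the meaning is recoverable, but the two maps should get different symbols.
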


For a complete $\text{CAT}(0)$ space, this is the so-called \emph{duality condition}, see for instance~\cite[Chapter III]{Ball95}. Notice that by Lemma~\ref{lem:bd_dist} it is enough to show the convergence in the lemma above for a single $x \in X$.

\begin{proof}
As $SM$ has finite volume and the geodesic flow $g^t$ preserves the Liouville measure, Poincar\'{e} recurrence implies for almost every $v \in SM$ there exists $t_n \rightarrow \infty$ such that $g^{t_n}(v) \rightarrow v$. So for a dense set of $v \in SX$ there exists $\psi_n \in \Gamma$ and $t_n \rightarrow \infty$ such that $\psi_n(g^{t_n} v) \rightarrow v$. Then 
\begin{align*}
0=\lim_{n \rightarrow \infty} d\big(\psi_n(\gamma_v(t_n)),\gamma_v(0)\big) = \lim_{n \rightarrow \infty} d\big(\gamma_v(t_n),\psi_n^{-1}(\gamma_v(0))\big).
\end{align*}
Let $p = \gamma_v(0)$ then 
\begin{align*}
\lim_{n \rightarrow \infty} b_{\psi_n^{-1}(p)}(x) 
&= \lim_{n \rightarrow \infty} d(x,\psi_n^{-1}(p))-d(\psi_n^{-1}(p),o) \\
&= \lim_{n \rightarrow \infty} d(x,\gamma_v(t_n))-d(\gamma_v(t_n),o) \\
&= b_v(x)-b_v(o) = v(+\infty)
\end{align*}
so $\psi_n^{-1}(p) \rightarrow v(+\infty)$.

Now let $\gamma_n:\RR \rightarrow X$ be the geodesic defined by $\gamma_n(t) = \psi_n(\gamma_v(t_n+t))$. Then $v_n=\gamma_n^\prime(0) \rightarrow v$. We claim that $\psi_n(\gamma_n(-t_n)) \rightarrow v(-\infty)$. As $v_n \rightarrow v$ and $b_w^t(x) \rightarrow b_w(x)$ locally uniformly in $x \in X$ and $w \in SX$ we see that 
\begin{align*}
b_{v_n}^{-t_n}-b_{v_n}^{-t_n}(o)=b_{-v_n}^{t_n}-b_{-v_n}^{t_n}(o) \rightarrow b_{-v}-b_{-v}(o)=v(-\infty).
\end{align*}
So $\psi_n(\gamma_v(0)) = \psi_n(\gamma_n(-t_n)) \rightarrow v(-\infty)$.

Notice that the conclusion of the lemma is a closed condition in $SX$ and the argument above shows that it holds on a dense subset, so the lemma follows.
\end{proof}

The remainder of the proof closely follows Ballmann's book~\cite{Ball95}.

\begin{lemma} 
With the notation in Theorem~\ref{thm:top_trans}, suppose $v,w \in  SX$. Let $\psi_n$ be a sequence of isometries of $X$ such that $\psi_n^{-1}(x) \rightarrow v(\infty)$ and $\psi_n(x) \rightarrow w(-\infty)$. Then there are sequences $v_n \in SX$ and $t_n \in \RR$ such that $v_n \rightarrow v$ and $\psi_n(g^{t_n} v_n) \rightarrow w$.
\end{lemma}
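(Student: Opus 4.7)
The plan is to build $v_n$ and $t_n$ directly from geodesic segments between two carefully chosen basepoints. Let $p$ be the footpoint of $v$ and $q$ the footpoint of $w$, so that $v \in S_pX$ and $w\in S_qX$. By Lemma~\ref{lem:bd_dist} the hypotheses on $\psi_n$ imply the same boundary limits for \emph{any} auxiliary point; in particular
\begin{align*}
\psi_n^{-1}(q)\to v(\infty) \quad\text{and}\quad \psi_n(p)\to w(-\infty)
\end{align*}
in $\hat X$.

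For each sufficiently large $n$ the points $p$ and $\psi_n^{-1}(q)$ are distinct, so let $\sigma_n:[0,t_n]\to X$ be the unique unit-speed geodesic segment from $p$ to $\psi_n^{-1}(q)$, where $t_n := d(p,\psi_n^{-1}(q))$, and set $v_n := \sigma_n'(0)\in S_pX$. Then $\exp_p(t_n v_n) = \psi_n^{-1}(q)\to v(\infty) = b_v - b_v(o)$, so Lemma~\ref{lem:convergence} immediately gives $t_n\to\infty$ and $v_n\to v$, which is the first half of the desired statement.

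For the second half, observe that $g^{t_n}v_n = \sigma_n'(t_n)$ is based at $\psi_n^{-1}(q)$, so $\psi_n(g^{t_n}v_n) = (\psi_n\circ\sigma_n)'(t_n)$ is based at $\psi_n(\psi_n^{-1}(q)) = q$ for every $n$; in particular the footpoint already agrees with that of $w$, and only the direction needs to be controlled. Consider the reversed geodesic $\tau_n(s) := (\psi_n\circ\sigma_n)(t_n-s)$, which runs from $q$ to $\psi_n(p)$; its initial velocity is $\tau_n'(0) = -\psi_n(g^{t_n}v_n) \in S_qX$, and $\exp_q(t_n\,\tau_n'(0)) = \psi_n(p)\to w(-\infty) = b_{-w}-b_{-w}(o)$. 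A second application of Lemma~\ref{lem:convergence}, this time to the sequence $\tau_n'(0)\in S_qX$ with parameters $t_n\to\infty$, therefore yields $\tau_n'(0)\to -w$, and hence $\psi_n(g^{t_n}v_n)\to w$ in $SX$, as required.

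There is no real obstacle once the correct basepoints $p$ and $q$ are chosen: the proof is essentially the tautology that isometries carry geodesics to geodesics, combined with two symmetric applications of Lemma~\ref{lem:convergence}, which converts boundary convergence of a sequence of the form $\exp_p(t_n v_n)$ into the two simultaneous statements $t_n\to\infty$ and $v_n\to$ (the $S_pX$-vector representing the limiting Busemann function). The continuity hypothesis on $v\mapsto U^s(v)$ enters only indirectly, through Lemma~\ref{lem:cont} and the resulting Lemmas~\ref{lem:bd_dist} and~\ref{lem:convergence} established in Subsection~4.2.
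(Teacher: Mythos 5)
Your proof is correct, and it is genuinely simpler than the paper's. The paper follows Ballmann's scheme: it joins the \emph{moving} points $\gamma_v(-m)$ and $\psi_n^{-1}(\gamma_w(m))$ by geodesics $\gamma_{m,n}$, uses Lemmas~\ref{lem:bd_dist} and~\ref{lem:convergence} to get $C^0$-closeness of $\gamma_{m,n}$ to $\gamma_v$ (and, after applying $\psi_n$, to $\gamma_w$) on the interval $[-m,m]$, and then extracts a diagonal sequence $n(m)$ to produce vectors $v_m=\gamma_{m,n(m)}'(0)$ with varying footpoints converging to $v$. You instead join the \emph{fixed} footpoints $p$ and $\psi_n^{-1}(q)$, so that the two vectors to be controlled -- $v_n=\sigma_n'(0)\in S_pX$ and $-\psi_n(g^{t_n}v_n)=\tau_n'(0)\in S_qX$ -- sit at fixed basepoints where the ``only if'' direction of Lemma~\ref{lem:convergence} applies verbatim (at $p$ with limit $b_v-b_v(o)$ since $\psi_n^{-1}(q)\to v(\infty)$ by Lemma~\ref{lem:bd_dist}, and at $q$ with limit $b_{-w}-b_{-w}(o)$ since $\psi_n(p)\to w(-\infty)$). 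This removes the diagonal argument and the long-interval closeness estimates entirely; what the paper's construction buys in exchange is the stronger (though unused) information that the connecting geodesics shadow $\gamma_v$ and $\gamma_w$ on arbitrarily long parameter intervals, which is the natural statement in the CAT(0) template being imitated. Both arguments rest on the same inputs (continuity of $v\mapsto U^s(v)$ feeding Lemmas~\ref{lem:cont}, \ref{lem:ang}, \ref{lem:convergence}, \ref{lem:bd_dist}), and your use of the uniqueness and minimality of geodesics between points of $X$ (so that $t_n=d(p,\psi_n^{-1}(q))$ and the representation $\psi_n^{-1}(q)=\exp_p(t_nv_n)$ is legitimate) is justified because $X$ is simply connected without conjugate points; one small slip of no consequence is that the supporting lemmas you cite are established in Subsections 4.2--4.3, not 4.2 alone.
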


\begin{proof}
For $m \geq 0$ and $n \in \NN$ let $\gamma_{m,n}:\RR \rightarrow X$ be a unit speed geodesic with 
\begin{align*}
\gamma_{m,n}(-m) = \gamma_v(-m) \text{ and } \gamma_{m,n}(t_{m,n}+m) = \psi_n^{-1}(\gamma_w(m))
\end{align*}
where
\begin{align*}
t_{m,n} = d(\gamma_v(-m),\psi_n^{-1}(\gamma_w(m)))-2m.
\end{align*}
Fix $m$, then Lemma~\ref{lem:bd_dist} implies that $\psi_n^{-1}(\gamma_w(m)) \rightarrow v(+\infty)$ and so Lemma~\ref{lem:convergence} implies that
\begin{align*}
\gamma_{m,n}^\prime(-m) \rightarrow \gamma_v^\prime(-m)
\end{align*}
as $n \rightarrow \infty$. So for $n$ large enough
\begin{align}
\label{eq:one}
d(\gamma_{m,n}(t),\gamma_v(t)) \leq 1/m \text{ for $-m \leq t \leq m$}
\end{align}
In a similiar fashion, for $n$ large enough
\begin{align}
\label{eq:two}
d(\psi_n \gamma_{m,n}(t_{m,n}-t),\gamma_w(-t)) \leq 1/m \text{ for $-m \leq t \leq m$}
\end{align}
Now pick $n=n(m)$ large enough so that~\ref{eq:one} and~\ref{eq:two} hold and let $v_m = \gamma_{m,n(m)}^\prime(0)$. Then $v_m \rightarrow v$ and $\psi_{n(m)} g^{t_{m,n(m)}} v_m \rightarrow w$.
\end{proof}

\begin{corollary} 
\label{cor:final}
With the notation above, let $v,w \in SX$ be unit speed geodesics in $X$ with $v(\infty)=w(\infty)$. Then there are sequences $v_n \in SX$, $t_n \in \RR$, and $\psi_n\in\Gamma$ such that $v_n \rightarrow v$ and $\psi_n(g^{t_n} v_n) \rightarrow w$.
\end{corollary}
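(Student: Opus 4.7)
The plan is to combine the immediately preceding lemma with the duality lemma (Lemma~\ref{lem:duality}), using the trick of applying duality to the reversed vector $-w$ rather than to $v$ or $w$ directly.

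The previous lemma takes as input a sequence $\psi_n$ of isometries satisfying $\psi_n^{-1}(x) \to v(\infty)$ and $\psi_n(x) \to w(-\infty)$, and produces the desired $v_n \to v$ and $t_n \in \RR$ with $\psi_n(g^{t_n}v_n) \to w$. So the only work is to produce such a sequence with $\psi_n \in \Gamma$. To do this, I would apply Lemma~\ref{lem:duality} to the vector $-w \in SX$. By definition $(-w)(+\infty) = b_{-w}-b_{-w}(o) = w(-\infty)$ and $(-w)(-\infty) = b_{w}-b_w(o) = w(+\infty)$, and by hypothesis $w(+\infty) = v(+\infty)$. Lemma~\ref{lem:duality} applied to $-w$ therefore yields a sequence $\psi_n \in \Gamma$ such that $\psi_n(x) \to w(-\infty)$ and $\psi_n^{-1}(x) \to v(+\infty)$ for every $x \in X$, which is exactly what the preceding lemma requires.

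Having produced this $\psi_n$, I would simply feed it into the preceding lemma to extract the sequences $v_n \to v$ and $t_n \in \RR$ with $\psi_n(g^{t_n}v_n) \to w$, and these sequences together with $\psi_n \in \Gamma$ give the corollary. There is no real obstacle here; the entire content of the corollary is the observation that the $\Gamma$-duality statement of Lemma~\ref{lem:duality}, applied to the reverse vector $-w$, produces precisely the pair of boundary convergences needed to invoke the previous (non-equivariant) lemma. In short, the corollary is the $\Gamma$-equivariant specialization of the previous lemma obtained by upgrading the arbitrary isometries $\psi_n$ to deck transformations via Poincar\'e recurrence on the finite volume quotient $SM$.
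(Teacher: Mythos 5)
Your proof is correct and is essentially the paper's argument: both reduce the corollary to the preceding lemma by producing, via Lemma~\ref{lem:duality} and the hypothesis $v(\infty)=w(\infty)$, a sequence $\psi_n\in\Gamma$ with $\psi_n^{-1}(x)\to v(\infty)$ and $\psi_n(x)\to w(-\infty)$. The only (cosmetic) difference is that you apply duality to $-w$, whereas the paper applies it to $w$ and implicitly passes to the inverse sequence $\varphi_n^{-1}$; the two are equivalent since $(-w)(\pm\infty)=w(\mp\infty)$ and $\Gamma$ is closed under inverses.
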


\begin{proof}
By Lemma~\ref{lem:duality}, there is a sequence $\varphi_n \in \Gamma$ such that $\varphi_n(x) \rightarrow w(\infty)$ and $\varphi_n^{-1}(x) \rightarrow w(-\infty)$. Now as $v(\infty)=w(\infty)$ we can apply the previous Lemma.
\end{proof}

\begin{proof}[Proof of Theorem~\ref{thm:top_trans}]
If the geodesic flow is topologically transitive on $SM$, then there exists $v \in SM$ such the the geodesic $\gamma_v$ has dense image in $SM$. Now lift $v$ to $\wt{v} \in SX$ and consider $w \in SX$, then because $\gamma_v$ is dense in $SM$ there exists $t_n \in \RR$ and $\psi_n \in \Gamma$ such that $\psi_n \left(g^{t_n} \wt{v}\right) \rightarrow w$, which implies that $\psi_n \left(\wt{v}(\infty)\right) \rightarrow w(\infty)$. As $w \in SX$ was arbitrary, $\wt{v}(\infty)$ has a dense $\Gamma$-orbit in $\partial \hat{X}$.

Now suppose $\xi \in \partial \hat{X}$ has a dense $\Gamma$-orbit. For a set $W \subset SX$, define $W(\infty)$ to be the points $\eta \in \partial \hat{X}$ with $\eta = v(\infty)$ for some $v \in W$. To demonstrate that the geodesic flow is topologically transitive on $SM$ it is enough to show that for any open sets $U,V \subset SX$ there exists $\psi \in \Gamma$ and $t \in \RR$ such that $U \cap \psi(g^t V) \neq \emptyset$. Letting $U(\infty)$ and $V(\infty)$ be the endpoints at infinity, as $\xi$ has dense $\Gamma$-orbit there exists a $\psi \in \Gamma$ and $\eta \in \partial \hat{X}$ such that $ \eta \in U(\infty) \cap \psi V(\infty)$. Let $u \in U$ and $v \in V$ such that $u(\infty) = \psi \left(v(\infty)\right)$ and apply Corollary~\ref{cor:final} these geodesics.
\end{proof}

\section{The geodesic flow is topologically transitive}

The purpose of this section is to prove the following.

\begin{proposition}
If $M$ is a non-flat finite volume harmonic manifold without conjugate points, then the geodesic flow is topologically transitive on $SM$.
\end{proposition}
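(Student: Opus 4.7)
The strategy is to verify the hypotheses of Theorem~\ref{thm:top_trans} and then invoke it.

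First, I would check the continuity hypothesis: that $v \mapsto U^s(v)$ is continuous on $SX$. In the harmonic setting this follows from Lemma~\ref{lem:harm_basic}(3), which gives continuity of $(v,x) \mapsto \nabla^n b_v(x)$ for every $n$. Writing $U^s_T(v)$ as the restriction to $v^\perp$ of the Hessian $\nabla^2 b_v^T$ at the foot point of $v$, and passing to the limit $T \to \infty$ using the locally uniform convergence $b_v^T \to b_v$ from Proposition~\ref{prop:phi_map} together with its $C^2$-upgrade in the harmonic case, identifies $U^s(v)$ (up to a fixed sign) with the restricted Hessian of $b_v$ at the foot point, which is continuous in $v$. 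Since harmonic manifolds have bounded sectional curvature (Besse, Ch.~6), all hypotheses of Theorem~\ref{thm:top_trans} hold and the problem reduces to producing a single $\xi_0 \in \partial \hat{X}$ whose $\Gamma$-orbit is dense.

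To produce such a $\xi_0$, I would invoke the harmonic/Patterson-Sullivan measure $\mu_o$ on $\partial \hat{X} \cong \partial_\Delta X$ supplied by Theorem~\ref{thm:full_support} and Theorem~\ref{thm:compact}. Three properties are at hand: $\mu_o$ has full topological support; $\mu_{\gamma x} = \gamma_* \mu_x$ for all $\gamma \in \Gamma$; and $\Gamma$ acts $\mu_o$-ergodically on $\partial \hat{X}$. The ergodicity is where $\Vol(M)<\infty$ enters. Given a $\Gamma$-invariant Borel $A \subset \partial \hat{X}$, the function $H_A(x) := \mu_x(A)$ is a bounded $\Gamma$-invariant harmonic function on $X$ that descends to a bounded harmonic function on $M$. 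A Caccioppoli cutoff computation $\int_M \phi_R^2 \norm{\nabla H_A}^2 \leq 2 \norm{H_A}_\infty^2 \int_M \norm{\nabla \phi_R}^2$, whose right side vanishes as $R \to \infty$ thanks to $\Vol(M)<\infty$, forces $H_A$ constant, say $H_A \equiv c$. Feeding $f := 1_A - c$ into the Poisson integral yields $H_f \equiv 0$, and Choquet uniqueness of the Martin representation---valid because Proposition~\ref{prop:minimal} shows every Martin boundary point is minimal---then gives $f = 0$ $\mu_o$-a.e.\ and hence $c \in \{0,1\}$.

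With ergodicity in hand, the dense orbit follows by a standard argument: fix a countable basis $\{U_n\}$ of the topology of $\partial \hat{X}$; each saturation $\Gamma \cdot U_n$ is open, $\Gamma$-invariant, and has positive $\mu_o$-measure by full support, hence full measure by ergodicity; the intersection $\bigcap_n \Gamma \cdot U_n$ therefore has full measure, and any point in it has $\Gamma$-orbit meeting every $U_n$, hence dense in $\partial \hat{X}$. Applying Theorem~\ref{thm:top_trans} finishes the proof. The main obstacle I foresee is the ergodicity step, specifically invoking Choquet uniqueness of the Poisson integral in this generality; a possible alternative is to prove directly that $\mu_x \rightharpoonup \delta_{\xi_0}$ as $x \to \xi_0$ using the explicit formula $\mu_x = (\Phi_x)_* \lambda_x$ from Theorem~\ref{thm:full_support}, which would force $c \in \{0,1\}$ by picking $\xi_0 \in A$ and $\xi_0 \in A^c$.
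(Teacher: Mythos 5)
Most of your argument coincides with the paper's: the reduction to Theorem~\ref{thm:top_trans} (with bounded curvature quoted from Besse), and the production of a dense $\Gamma$-orbit via ergodicity of the boundary action are exactly the paper's Section~5. Your function $H_A(x)=\mu_x(A)$ is the paper's $H$ in Lemma~\ref{lem:ergodic}; your Caccioppoli cutoff makes explicit the ``bounded harmonic on a finite volume manifold is constant'' step that the paper only asserts, and your appeal to uniqueness of the representing measure (legitimate because of Proposition~\ref{prop:minimal}) is the same as the paper's appeal to uniqueness of the harmonic measure. The full-support input from Theorem~\ref{thm:full_support} and the countable-basis argument are also identical.

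The one place you genuinely diverge is the continuity of $v\mapsto U^s(v)$, and there your argument has a gap. You want to identify $U^s(v)$, defined as $\lim_{T\to\infty}\JJ_{v,T}^\prime(0)$, with $-\nabla^2 b_v(\pi v)|_{v^\perp}$ and then quote Lemma~\ref{lem:harm_basic}(3). The identity is true, but the justification you offer --- locally uniform $C^0$ convergence $b_v^T\to b_v$ from Proposition~\ref{prop:phi_map} ``together with its $C^2$-upgrade'' --- is precisely what is missing. Lemma~\ref{lem:harm_basic}(3) says $(v,x)\mapsto\nabla^2 b_v(x)$ is continuous; it does not say that $\nabla^2 b_v^T(\pi v)=-U^s_T(v)$ converges to $\nabla^2 b_v(\pi v)$. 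Uniform $C^0$ convergence of a sequence of smooth functions together with pointwise convergence of their Hessians at a point does not force the limit of the Hessians to equal the Hessian of the limit, and the elliptic estimates behind Ranjan--Shah's smoothness of $b_v$ (which use $\Delta b_v^T\to h_{vol}$ uniformly) only yield $C^{1,\alpha}$-type convergence of $b_v^T$, not convergence of second derivatives. The standard reference for $\nabla^2 b_v=-U^s$ is Eschenburg, but his hypothesis is locally uniform convergence $U^s_T\to U^s$, which by Dini's theorem is essentially equivalent to the continuity of $U^s$ you are trying to establish --- so that route is circular unless you supply an independent proof of the $C^2$ convergence.

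The paper's final lemma of Section~5 circumvents exactly this difficulty: since $\tr U^s(v)\equiv h_{vol}$ (Heber), the scalar functions $\tr U^s_T(v)$ increase to a continuous limit, so Dini gives locally uniform convergence of traces; and because $U^s(v)-U^s_T(v)$ is positive semidefinite (Proposition~\ref{prop:riccati_basic}), its largest eigenvalue is dominated by its trace, whence $U^s_T\to U^s$ locally uniformly and continuity of $U^s$ follows from continuity of each $U^s_T$. If you want to keep your Hessian-based route, you would need to prove (or find a citation for) the statement that $b_v^T\to b_v$ in $C^2_{loc}$, or equivalently that the second fundamental forms of large geodesic spheres converge to those of horospheres, without presupposing the locally uniform convergence of $U^s_T$; as written, this step is not justified by the results you invoke.
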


We will first show that there is a dense $\pi_1(M)$-orbit in the Busemann boundary of the universal cover of $M$ and then prove that the stable Riccati solution is continuous. By~\cite[Chapter 6]{besse78}, $M$ has bounded sectional curvature and then the proposition follows from Theorem~\ref{thm:top_trans}.

\begin{lemma}
\label{lem:ergodic}
Suppose $M$ is a finite volume non-flat harmonic manifold without conjugate points. Let $X$ be the universal Riemannian cover of $M$ with deck transformations $\Gamma=\pi_1(M) \subset \text{Isom}(X)$. If $\{ \nu_x : x\in X\}$ is the $\Gamma$-Patterson-Sullivan measure on $\partial \hat{X}$, then group $\Gamma$ acts ergodically on $(\partial \hat{X},\nu_o)$, in the sense that any $\Gamma$-invariant $\nu_o$-measurable set $A \subset \partial \hat{X}$ has either $\nu_o(A)=1$ or $\nu_o(A)=0$.
\end{lemma}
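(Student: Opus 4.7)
The plan is to reduce the ergodicity statement to the identification of $\{\nu_x\}$ with the harmonic measure $\{\mu_x\}$, already established in the previous proposition, and then to run a standard Poisson-integral argument. Let $A \subset \partial \hat{X}$ be a $\Gamma$-invariant, $\nu_o$-measurable set. Identifying $A$ with a subset of $\partial_\Delta X$ via Theorem~\ref{thm:bds}, I would form the Poisson integral
\[
H_A(x) = \mu_x(A) = \int_A h(x)\, d\mu(h),
\]
which by Proposition~2.5 is a bounded harmonic function on $X$ taking values in $[0,1]$.

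The next step is to show that $H_A$ descends to a constant function on $M$. The construction of $\mu_x$ in the proof of Theorem~\ref{thm:full_support} uses the Riemannian volume form on $X$, which is $\Gamma$-invariant, so $g_*\mu_x = \mu_{gx}$ for every $g \in \Gamma$; combined with $g^{-1}A = A$ this gives $H_A(gx) = (g_*\mu_x)(A) = \mu_x(A) = H_A(x)$. Hence $H_A$ descends to a bounded harmonic function on the finite-volume manifold $M$. Then $H_A^2$ descends to a nonnegative subharmonic function on $M$ (since $\Delta H_A^2 = 2|\nabla H_A|^2 \geq 0$) that is bounded, and hence lies in $L^1(M)$ because $M$ has finite volume; Yau's theorem that any nonnegative $L^1$ subharmonic function on a complete Riemannian manifold is constant therefore forces $|\nabla H_A| \equiv 0$, so $H_A \equiv c$ for some $c \in [0,1]$.

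Finally, I would use uniqueness of the Choquet representation to show $c \in \{0,1\}$. By Proposition~\ref{prop:minimal} every element of $\partial_\Delta X$ is minimal, so $\partial_\Delta X = \mathcal{K}_o^*$ and each element of $\mathcal{K}_o$ has a \emph{unique} representing Borel measure on $\partial_\Delta X$. Using $h(o)=1$ for $h\in\mathcal{K}_o$, one checks that both $\mathbf{1}_A\, d\mu$ and $c\, d\mu$ represent the same positive harmonic function (namely the constant function $c$), so uniqueness gives $\mathbf{1}_A = c$ $\mu$-almost everywhere. Since $\mathbf{1}_A$ takes values only in $\{0,1\}$, we must have $c \in \{0,1\}$, and then $\nu_o(A) = \mu_o(A) = \mu(A) = c \in \{0,1\}$. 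I expect the main obstacle to be this last step: it depends crucially on Proposition~\ref{prop:minimal}, for without knowing that every Martin boundary point is minimal the Choquet representing measure could live on a strictly smaller set than $\partial_\Delta X$ and the uniqueness argument would fail.
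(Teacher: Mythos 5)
Your proposal is correct and is essentially the paper's own proof: both take the Poisson-type integral of $\chi_A$ against the measure family, use finite volume to force the descended $\Gamma$-invariant harmonic function to be constant, and then invoke uniqueness of the representing (harmonic) measure, which --- as you rightly stress --- rests on Proposition~\ref{prop:minimal} and is what the paper uses implicitly via ``uniqueness of the harmonic measure.'' The only differences are cosmetic (the paper normalizes by $\nu_o(A)$ and gets $\Gamma$-invariance directly from the Patterson--Sullivan equivariance $g_*\nu_x=\nu_{gx}$ rather than from the construction of $\mu_x$), plus one citation to fix in your constancy step: quote Yau's theorem for nonnegative subharmonic functions in $L^p$ with $p>1$ (the $L^1$ version fails on general complete manifolds), which is all you need here since $H_A^2$ is bounded and $\mathrm{Vol}(M)<\infty$, so it lies in every $L^p$.
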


\begin{proof}
Assume $A$ is a $\Gamma$-invariant set with $\nu_o(A) >0$, then the function
\begin{align*}
H(x) = \frac{1}{\nu_o(A)}\int_{\partial \hat{X}} \chi_A(\xi) e^{-h_{vol}\xi(x)}d\nu_o(\xi)
\end{align*}
is harmonic and $\Gamma$-invariant. In particular it descends to a harmonic function on $M$, but $M$ has finite volume so $H$ must be constant. Now $H \equiv 1$ because 
\begin{align*}
H(o)= \frac{1}{\nu_o(A)}\int_{\partial \hat{X}} \chi_A(\xi) e^{-h_{vol}\xi(o)}d\nu_o(\xi)=\frac{1}{\nu_o(A)}\int_{\partial \hat{X} } \chi_A(\xi)d\nu_o(\xi)=1.
\end{align*}
Then identifying $\partial \hat{X}$ with the Poisson boundary of $X$ implies (by the uniqueness of the harmonic measure) that
\begin{align*}
d\nu_o(\xi) = \frac{\chi_A(\xi)d\nu_o(\xi)}{\nu_o(A)}.
\end{align*}
Thus $\nu_o(A)=1$.
\end{proof}

\begin{corollary}
Suppose $M$ is a finite volume non-flat harmonic manifold without conjugate points. Let $X$ be the universal Riemannian cover of $M$ with deck transformations $\Gamma=\pi_1(M) \subset \text{Isom}(X)$. Then there is a dense $\Gamma$-orbit in $\partial\hat{X}$.
\end{corollary}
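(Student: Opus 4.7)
The plan is to combine the ergodicity statement in Lemma~\ref{lem:ergodic} with the full-support property of the harmonic/Patterson--Sullivan measure established in Theorem~\ref{thm:full_support} (together with the coincidence of the two families of measures proved in the Proposition of Section 3.4). The standard topological-dynamics heuristic is that an ergodic action of a group on a second countable space, with respect to a measure of full support, automatically admits a full-measure set of points with dense orbit. I would simply instantiate that heuristic in this setting.

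First I would record that $\partial \hat{X}$ is second countable: by Proposition~\ref{prop:phi_map} the map $\Phi_o : S_oX \to \partial \hat{X}$ is a homeomorphism, and $S_oX$ is a sphere in a finite-dimensional Euclidean space, hence metrizable and separable. Fix a countable basis $\{U_n\}_{n \in \NN}$ for the topology of $\partial \hat{X}$. Next, combining Theorem~\ref{thm:full_support} with the Proposition of Section 3.4, the Patterson--Sullivan measure $\nu_o$ coincides, up to the identification of the Busemann and Martin boundaries, with the harmonic measure $\mu_o$, and this measure has full support (being the pushforward under $\Phi_o$ of Lebesgue measure on $S_oX$). In particular $\nu_o(U_n) > 0$ for every $n$.

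For each $n$ the set $\Gamma \cdot U_n = \bigcup_{g \in \Gamma} g \cdot U_n$ is open and $\Gamma$-invariant, and has $\nu_o$-measure at least $\nu_o(U_n) > 0$. Since $\Gamma$ acts ergodically on $(\partial \hat{X}, \nu_o)$ by Lemma~\ref{lem:ergodic}, this forces $\nu_o(\Gamma \cdot U_n) = 1$. Hence the intersection
\begin{align*}
D = \bigcap_{n \in \NN} \Gamma \cdot U_n
\end{align*}
satisfies $\nu_o(D) = 1$, and in particular $D$ is nonempty (indeed dense in $\partial \hat{X}$, since $\nu_o$ has full support). Any $\xi \in D$ has a $\Gamma$-orbit that meets every basic open set $U_n$, i.e.\ $\Gamma \cdot \xi$ is dense in $\partial \hat{X}$.

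I do not expect a significant obstacle: the main inputs (full support of the Patterson--Sullivan measure, ergodicity of the $\Gamma$-action, second countability of $\partial \hat{X}$) are all already at hand from the earlier sections, and the remaining argument is a routine application of the basis-of-opens trick. The only thing to be careful about is to cite the correct results so that one is indeed working with $\nu_o$ rather than $\mu_o$ when invoking full support; this is immediate from the identification of Patterson--Sullivan and harmonic measures proved above.
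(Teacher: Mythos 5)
Your proposal is correct and follows essentially the same route as the paper: identify $\partial\hat{X}$ with $S_oX$ so that $\nu_o$ is (under this identification) the Lebesgue measure, hence of full support on a second countable space, and then apply Lemma~\ref{lem:ergodic} via the standard countable-basis argument, with $\bigcap_n \bigcup_{\gamma\in\Gamma}\gamma(U_n)$ having full measure and consisting of points with dense orbit. The only cosmetic difference is the bookkeeping of citations (you route the full-support claim through Theorem~\ref{thm:full_support} and the measure-coincidence proposition, while the paper cites Proposition~\ref{prop:phi_map} and Theorem~\ref{thm:main}); the content is the same.
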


\begin{proof}
By Proposition~\ref{prop:phi_map} and Theorem~\ref{thm:main}, $\partial \hat{X}$ is homeomorphic to $S_oX$ and under this identification $\nu_o$ is the Lebesque measure on $S_oX$. In particular, $\partial \hat{X}$ has a countable basis and every open set in $\partial \hat{X}$ has positive $\nu_o$-measure.

Then the corollary follows from Lemma~\ref{lem:ergodic} and a well known argument in ergodic theory: let $\{U_n\}$ be a countable basis of open sets for $\partial \hat{X}$. Then 
\begin{align*}
\xi \in \cap_{n \in \NN} \cup_{\gamma \in \Gamma} \gamma(U_n)
\end{align*}
if and only if the $\Gamma$-orbit of $\xi$ is dense. However for $n \in \NN$, the set $A_n=\cup_{\gamma \in \Gamma} \gamma(U_n)$ is $\Gamma$-invariant and has positive measure, hence $\nu_o(A_n)=1$. Thus $\nu_o$-almost every $\xi \in \partial \hat{X}$ has a dense $\Gamma$-orbit.
\end{proof}

\begin{lemma}
Let $X$ be a simply connected, non-compact harmonic manifold. Then the map $v \rightarrow U^s(v)$ is continuous.
\end{lemma}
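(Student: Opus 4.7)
The plan is to identify $U^s(v)$ with the restriction of $-\nabla^2 b_v$ to $v^\perp$ at the basepoint $p$ of $v$, and then to invoke Lemma~\ref{lem:harm_basic}, which says that Busemann functions and all their derivatives depend continuously on $v$.

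First I would observe that $\norm{\nabla b_v}\equiv 1$, so the integral curves of $-\nabla b_v$ are unit-speed geodesics, each of which is forward-asymptotic to $\gamma_v$ by the construction of the Busemann function. Since $-\nabla b_v(p)=v$, the flow $\phi_t$ of $-\nabla b_v$ satisfies $\phi_t(p)=\gamma_v(t)$.

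Next, for a fixed $w\in v^\perp\subset T_pX$, set $\alpha(s,t)=\phi_t(\exp_p(sw))$. Each curve $t\mapsto\alpha(s,t)$ is a geodesic asymptotic to $\gamma_v$, so the variation field $J(t)=\partial_s\alpha|_{s=0}$ is a Jacobi field along $\gamma_v$ with $J(0)=w$ that is bounded on $[0,\infty)$. A standard covariant-derivative interchange yields
\begin{align*}
J'(0)=\frac{D}{ds}\bigg|_{s=0}\frac{\partial\alpha}{\partial t}(s,0) = \frac{D}{ds}\bigg|_{s=0}\bigl(-\nabla b_v(\exp_p(sw))\bigr) = -\nabla_w\nabla b_v\big|_p.
\end{align*}
Since $\JJ_{v,T}(\cdot)w$ is the variation field of the family of geodesics from $\exp_p(sw)$ to $\gamma_v(T)$, and this family converges geometrically to the asymptotic variation $\alpha$ as $T\to\infty$, the monotone convergence $\JJ_{v,T}\to\JJ^s_v$ from Proposition~\ref{prop:riccati_basic} forces $J(t)=\JJ^s_v(t)w$. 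Hence $U^s(v)w=(\JJ^s_v)'(0)w=-\nabla_w\nabla b_v|_p$, exhibiting $U^s(v)$ as the restriction to $v^\perp$ of $-\nabla^2 b_v$ at $p$.

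To conclude, Lemma~\ref{lem:harm_basic} asserts that $(v,x)\mapsto\nabla^n b_v(x)$ is continuous for all $n\geq 0$, so in particular the Hessian $\nabla^2 b_v$ evaluated at the basepoint of $v$ depends continuously on $v\in SX$; restricting to the (continuously varying) orthogonal complement $v^\perp$ then yields the required continuity of $v\mapsto U^s(v)$. The main subtlety is the identification $J=\JJ^s_v(\cdot)w$, which one must verify by comparing the asymptotic variation $\alpha$ produced by the gradient flow of the Busemann function with the finite-$T$ variations whose variation fields define $\JJ_{v,T}$, and using the limit statement of Proposition~\ref{prop:riccati_basic}.
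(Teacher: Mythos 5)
Your route is genuinely different from the paper's: the paper never touches Busemann functions here, but instead notes that $\tr U^s(v)\equiv h_{vol}$ is constant, uses the monotone convergence $U^s_T(v)\to U^s(v)$ of Proposition~\ref{prop:riccati_basic} together with Dini's theorem to get locally uniform convergence of the traces, and then bounds the largest eigenvalue of the positive semidefinite difference $U^s(v)-U^s_T(v)$ by its trace to upgrade this to locally uniform convergence of $U^s_T$, whence continuity. Your plan --- identify $U^s(v)$ with $-\nabla^2 b_v$ restricted to $v^{\bot}$ at the foot point and then quote Lemma~\ref{lem:harm_basic}(3) --- is based on a true and standard identification, and the final continuity step would be fine if that identification were in hand.

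The gap is exactly at the step you flag. To conclude $J(t)=\JJ^s_v(t)w$ you assert that the variations by geodesics from $\exp_p(sw)$ to $\gamma_v(T)$ ``converge geometrically'' to the gradient-flow variation $\alpha$; but what is needed is convergence of the $s$-derivatives of these variations, i.e. $\JJ_{v,T}(t)w\to J(t)$, equivalently $U^s_T(v)w\to -\nabla_w\nabla b_v|_p$, equivalently convergence of the Hessians $\nabla^2 b_v^T\to\nabla^2 b_v$ at $p$. None of the results you invoke supply this: Lemma~\ref{lem:harm_basic} concerns $b_v$ itself, not the approximants $b_v^T$; Proposition~\ref{prop:phi_map} gives only locally uniform $C^0$ convergence of $b_v^T$; and Proposition~\ref{prop:riccati_basic} defines $U^s$ as a limit of the $U^s_T$ but says nothing tying it to $b_v$. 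Since $J$ and $\JJ^s_v(\cdot)w$ are Jacobi fields with the same value $w$ at $t=0$, identifying them is the very statement $U^s(v)w=-\nabla_w\nabla b_v|_p$ you are trying to prove, so as written the argument is circular at this point. Boundedness of $J$ on $[0,\infty)$ does not rescue it (bounded Jacobi fields need not be stable in manifolds without conjugate points), and the fact that $-\nabla^2 b_v$ solves the Riccati equation along $\gamma_v$ does not single out the stable solution either, since the unstable solution is also defined for all time. To complete your approach you would need $C^2_{loc}$ convergence $b_v^T\to b_v$ in the harmonic setting (extractable from Ranjan--Shah, or with some work from the Green's function arguments of Theorem~\ref{thm:bds} combined with Lemma~\ref{lem:harm_conv}), or an explicit citation for the identification of $U^s(v)$ with the Busemann Hessian; the paper's trace-plus-Dini argument sidesteps this issue entirely.
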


\begin{proof}
It is well known that $\tr U^s(v) = h_{vol}$ for all $v \in SX$ (see for instance Heber~\cite[Remark 2.2]{heber06}). Then the map $v \rightarrow \tr U^s(v)$ is constant and therefore continuous. By Proposition~\ref{prop:riccati_basic}, $\tr U_T^s(v)$ converges to $\tr U^s(v)$ monotonically as $T \rightarrow \infty$. So Dini's theorem implies that $\tr U_T^s(v) \rightarrow \tr U^s(v)$ locally uniformly in $v \in SX$. 

We claim that $U^s_T(v) \rightarrow U^s(v)$ locally uniformly. Let $\lambda_{max}(v,T)$ be the maximum eigenvalue of $U^s(v)-U^s_T(v)$. As $U^s(v)-U^s_T(v)$ is positive semidefinite, $U^s_T(v)$ converges to $U^s(v)$ locally uniformly if and only if $\lambda_{max}(v,T)$ converges locally uniformly to zero. But
\begin{align*}
0 \leq \lambda_{max}(v,T) \leq \tr (U^s(v)-U^s_T(v))
\end{align*}
and so $U^s_T(v) \rightarrow U^s(v)$ locally uniformly. Then as $v \rightarrow U_T^s(v)$ is continuous the lemma follows.
\end{proof}

\appendix

\section{Remarks on convergence}

\subsection{Convergence of Busemann functions:}

The following is probably well known, but unable to find a reference we include the short proof.

\begin{lemma}
\label{lem:conv}
Suppose $(X,g)$ is a simply connected Riemannian manifold without conjugate points. If the sectional curvature of $(X,g)$ is bounded below, $y_n$ is a sequence in $X$, and $\xi \in \partial \hat{X}$ then the following are equivalent:
\begin{enumerate}
\item $b_{y_n}$ converges to $\xi$ locally uniformly on $X$,
\item $\nabla b_{y_n}$ converges to $\nabla \xi$ locally uniformly on $X$.
\end{enumerate}
\end{lemma}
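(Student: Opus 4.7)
My plan is to prove the two implications separately via integration of the gradients along the geodesics from $o$. Simple-connectedness together with the absence of conjugate points makes $\exp_o:T_oX\to X$ a diffeomorphism, so each $x\in X$ is joined to $o$ by a unique unit-speed geodesic $\alpha_x:[0,d(o,x)]\to X$. Each $b_{y_n}$ is $C^1$ with $\|\nabla b_{y_n}\|\equiv 1$ by~\cite[Proposition 1]{eschen77}, and the normalization $b_{y_n}(o)=\xi(o)=0$ together with the fundamental theorem of calculus yields
\[
b_{y_n}(x)=\int_0^{d(o,x)}\bigl\langle\nabla b_{y_n}(\alpha_x(s)),\alpha_x'(s)\bigr\rangle\,ds.
\]

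For the direction (2) $\Rightarrow$ (1), I fix a compact set $K\subset X$ and observe that the union $K'$ of the geodesic segments $\alpha_x([0,d(o,x)])$ for $x\in K$ is compact, being the $\exp_o$-image of a compact subset of $T_oX$. The assumed uniform convergence $\nabla b_{y_n}\to\nabla\xi$ on $K'$, together with the displayed identity (applied analogously to $\xi$), immediately produces uniform convergence $b_{y_n}\to\xi$ on $K$.

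For the direction (1) $\Rightarrow$ (2), the strategy is local equicontinuity of $\{\nabla b_{y_n}\}$ combined with uniqueness of subsequential limits. Since $\xi\in\partial\hat X$ forces $y_n\to\infty$, for every compact $K\subset X$ the distance $d(\cdot,y_n)$ is uniformly bounded below on $K$ for $n$ large. The lower sectional curvature bound $\text{Sec}\geq-\kappa^2$, via Rauch comparison applied to the Jacobi tensor defining the shape operator of the geodesic spheres about $y_n$ (equivalently, the Riccati equation $U'+U^2+R=0$), then yields a uniform bound on $\|\nabla^2 b_{y_n}\|$ over $K$ for all large $n$; hence $\{\nabla b_{y_n}\}$ is equi-Lipschitz on $K$. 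By Arzel\`a--Ascoli every subsequence of $\nabla b_{y_n}$ admits a further subsequence $\nabla b_{y_{n_k}}\to V$ converging locally uniformly to a continuous unit vector field $V$ on $X$. Passing to the limit in the displayed identity gives
\[
\xi(x)=\int_0^{d(o,x)}\bigl\langle V(\alpha_x(s)),\alpha_x'(s)\bigr\rangle\,ds
\]
for every $x\in X$, which both shows $\xi$ is $C^1$ and identifies $V=\nabla\xi$. Since every subsequential limit of $\nabla b_{y_n}$ coincides with $\nabla\xi$, the full sequence converges to $\nabla\xi$ locally uniformly.

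The one real obstacle is the uniform Hessian estimate used for equicontinuity; this is where the lower sectional curvature bound enters essentially, and some care is required to check that the Rauch/Riccati bound on the shape operator of the distance spheres $S_{d(\cdot,y_n)}(y_n)$ is indeed uniform in $n$ on the fixed compact set $K$ (this boils down to the observation that the relevant radii $d(x,y_n)$ for $x\in K$ are uniformly bounded below once $n$ is large). Everything else is a routine combination of Arzel\`a--Ascoli with the fact that integration of gradients along geodesics emanating from $o$ commutes with locally uniform convergence.
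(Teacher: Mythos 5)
Your proof is correct and follows essentially the same route as the paper: the substantive direction (1) $\Rightarrow$ (2) rests on a uniform bound for $\|\nabla^2 b_{y_n}\|=\|\nabla_x^2 d(x,y_n)\|$ away from $y_n$ (which the paper simply quotes from Eberlein's Lemma 2.8, the same Riccati-comparison fact you sketch, where the no-conjugate-points hypothesis supplies the lower eigenvalue bound) followed by Arzel\`a--Ascoli, while the direction (2) $\Rightarrow$ (1), which you carry out by integrating gradients along geodesics from $o$, is the one the paper dismisses as trivial.
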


\begin{proof}
Suppose $b_{y_n}$ converges to $\xi \in \partial \hat{X}$ locally uniformly on $X$. Since the curvature is bounded from below, by~\cite[Lemma 2.8]{eberlein73I} there exists $\kappa >0$ such that for $d(x,y_n)>1$ we have
\begin{align*}
\norm{\nabla^2 b_{y_n}(x) }=\norm{\nabla_x^2 d(x,y_n)} \leq \kappa.
\end{align*} 
Now the Arzel\`{a}-Ascoli theorem implies that $\nabla b_{y_n}$ converges to $\nabla \xi \in \partial \hat{X}$ locally uniformly on $X$.

The other direction is trivial.
\end{proof}  

\subsection{Convergence of harmonic functions:}

In this subsection we prove the following.

\begin{lemma}
\label{lem:harm_conv}
Suppose $(X,g)$ is a harmonic manifold. Let $\OO \subset X$ be an open subset, $h_n$ a sequence of harmonic functions on $\OO$, and $h$ a function on $\OO$. Then the following are equivalent:
\begin{enumerate}
\item $h_n \rightarrow h$ locally uniformly in $\OO$,
\item for all $k \geq 0$, $\nabla^k h_n \rightarrow \nabla^k h$ locally uniformly in $\OO$.
\end{enumerate}
\end{lemma}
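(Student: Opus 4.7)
The direction $(2) \Rightarrow (1)$ is immediate on taking $k=0$, so the content is the reverse implication. My plan is to invoke interior regularity for the Laplace--Beltrami operator and then Arzel\`a--Ascoli to upgrade locally uniform convergence to locally uniform convergence of every derivative.

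Concretely, I would argue as follows. Since each $h_n$ is harmonic and $\Delta$ is a second-order elliptic operator with smooth coefficients on $\OO$, standard interior Schauder estimates give, for each $k \geq 0$ and each pair of relatively compact open sets $U \Subset U' \Subset \OO$, a constant $C = C(k, U, U')$ such that
\begin{align*}
\norm{u}_{C^k(U)} \leq C \norm{u}_{L^\infty(U')}
\end{align*}
for every harmonic $u$ on $\OO$. Applied to the differences $h_n - h_m$ (which are again harmonic), the assumed local uniform convergence of $\{h_n\}$ shows that $\{h_n\}$ is Cauchy in $C^k_{\mathrm{loc}}(\OO)$ for every $k$. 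Consequently $h$ is smooth and $\nabla^k h_n \to \nabla^k h$ locally uniformly on $\OO$ for every $k \geq 0$.

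The only nontrivial ingredient is the interior estimate, which is standard elliptic theory and, strictly speaking, uses nothing about the manifold being harmonic. In the harmonic setting, however, this estimate admits an especially transparent proof using Lemma~\ref{lem:MVT}: since $\mathrm{Vol}_X S_r(p)$ depends only on $r$ by Lemma~\ref{lem:volume_ratio}, integrating the spherical mean value identity in $r$ gives the solid mean value identity
\begin{align*}
h(p) = \frac{1}{V(R)} \int_{B_R(p)} h(y)\, dy, \qquad V(R) := \int_0^R \Theta(r)\, dr,
\end{align*}
valid whenever $B_R(p) \subset \OO$. Differentiating under the integral in normal coordinates around a fixed point (using the smoothness of the exponential map and its Jacobian) then bounds $\norm{\nabla^k h}_{L^\infty(U)}$ by a constant times $\norm{h}_{L^\infty(U')}$. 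The main obstacle is bookkeeping: keeping track of the $p$-dependence of the integration domain $B_R(p)$ when differentiating in $p$. This is handled by moving everything to a fixed ball in $T_pX$ via the exponential map, after which the derivative bounds are immediate.
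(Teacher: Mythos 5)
Your main argument is correct, but it takes a genuinely different route from the paper. You deduce $(1)\Rightarrow(2)$ from standard interior elliptic (Schauder) regularity: applying the estimate $\norm{u}_{C^k(U)}\leq C(k,U,U')\norm{u}_{L^\infty(U')}$ to the harmonic differences $h_n-h_m$ shows the sequence is Cauchy in $C^k_{\mathrm{loc}}(\OO)$ for every $k$, hence $h$ is smooth and $\nabla^k h_n\rightarrow\nabla^k h$ locally uniformly; this is complete and needs nothing about $X$ being harmonic. The paper deliberately avoids quoting elliptic theory: it builds a smooth compactly supported \emph{radial} kernel $\phi_\epsilon(x,y)$, normalized using the fact that $\mathrm{Vol}_X S_r(p)$ is independent of $p$, so that the spherical mean value property (Lemma~\ref{lem:MVT}) gives $\phi_\epsilon * h = h$ for every harmonic $h$; then $\nabla^k h_n(x)=\int_X h_n(y)\,\nabla^k_x\phi_\epsilon(y,x)\,dy$, and locally uniform convergence of $h_n$ immediately gives locally uniform convergence of every covariant derivative, with all derivatives landing on the smooth kernel rather than on $h_n$. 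Your route buys generality (it works for any second-order elliptic operator on any manifold) at the price of citing interior estimates as a black box; the paper's route is elementary and self-contained modulo Lemma~\ref{lem:MVT}, which is exactly why it insists on the harmonic hypothesis.

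One caveat: the secondary ``transparent proof'' you sketch in the harmonic setting does not work as stated. After passing to the solid mean value identity and pulling the integral back to a fixed ball in $T_pX$, the integrand becomes $h(\exp_p(v))J_p(v)$, and differentiating this in $p$ reintroduces derivatives of $h$ inside the integral, so it cannot yield a bound of $\norm{\nabla^k h}_{L^\infty(U)}$ by $C\norm{h}_{L^\infty(U')}$; if instead you keep the integral over the moving ball $B_R(p)$, the kernel $V(R)^{-1}\chi_{\{d(p,y)\leq R\}}$ is not differentiable in $p$, so you cannot differentiate under the integral at all. The repair is precisely the paper's device: replace the sharp cutoff by a smooth radial bump $\phi_\epsilon(p,y)$ (still reproducing $h$ by Lemma~\ref{lem:MVT}), so that every $p$-derivative falls on the kernel and never on $h$. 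Since this sketch is offered only as an aside, it does not affect the validity of your main proof.
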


This lemma follows from the basic theory of elliptic partial differential equations, but we will exploit the fact that $X$ is harmonic to provide a simple proof. 

For $C,\epsilon >0$, define $\phi_{\epsilon}: X \times X \rightarrow \RR$ by 
\begin{align*}
\phi_{\epsilon}(x,y) = \left\{
\begin{array}{ll}
C \exp\left( -(\epsilon^2-d(x,y)^2)^{-1} \right) & \text{for } d(x,y) \leq \epsilon \\
0 & \text{else.}
\end{array}\right.
\end{align*}
By construction $\phi_\epsilon \in C^{\infty}(X \times X)$ and for $y \in X$ fixed the function $x \rightarrow \phi_\epsilon(y,x)$ is compactly supported. Further as $X$ is harmonic the volume of a geodesic sphere only depends on its radius and we may pick $C=C(\epsilon)>0$ such that 
\begin{align*}
\int_X \phi_\epsilon(y,x)dy = 1
\end{align*}
for all $x \in X$. Now suppose $f: X \rightarrow \RR$ is continuous then basic analysis shows that the function
\begin{align*}
(\phi_\epsilon * f)(x) = \int_X f(y)\phi_\epsilon(y,x)dy
\end{align*}
is smooth and $\nabla^k(\phi_\epsilon * f)(x) = \int_X f(y)\nabla^k_x\phi_\epsilon(y,x)dy$. 

Let $h$ be a harmonic function on $\OO$. For $\epsilon$ positive and sufficiently small the function $\phi_{\epsilon} * h$ is defined on an open subset of $\OO$ and by Lemma~\ref{lem:MVT}, $\phi_\epsilon * h = h$ on this open subset. 

\begin{proof}[Proof of Lemma~\ref{lem:harm_conv}]
Suppose $h_n \rightarrow h$ locally uniformly in $\OO$. Fix a compact set $K \subset \OO$, then for $\epsilon$ positive and sufficiently small the set
\begin{align*}
B_{\epsilon}(K) = \{x \in X : d(x,K)\leq \epsilon\}
\end{align*}
is compact and contained in $\OO$. Then for $x \in K$ and $k \geq 1$:
\begin{align*}
\nabla^k h_n(x) = \nabla^k (\phi_\epsilon * h_n)(x) = \int_X h(y) \nabla^k_x \phi_\epsilon(y,x) dy
\end{align*}
and $\nabla^k h_n$ converges uniformly on $K$. As $K$ was an arbitrary compact subset of $\OO$, this shows that $\nabla^k h_n \rightarrow \nabla^k h$ locally uniformly in $\OO$.
\end{proof}

\bibliographystyle{alpha}
\bibliography{geom}

\end{document}